\documentclass{amsart}

\usepackage{delimset, amssymb, enumitem, caption, mathtools}
\captionsetup{margin=0cm}
\setlist{itemsep=4pt, topsep=0pt, leftmargin=17pt, listparindent=11pt}

\usepackage{xcolor}
\definecolor{PKU}{cmyk}{0, 1, 1, .45}
\definecolor{BIT}{cmyk}{1, 0, 1, 0}
\usepackage[colorlinks, citecolor=BIT, linkcolor=PKU, pagebackref, ocgcolorlinks]{hyperref}

\usepackage[capitalize]{cleveref}
\crefname{def}{Def.}{Defs.}
\Crefname{def}{Definition}{Definitions}
\creflabelformat{def}{~\upshape(#2#1#3)}
\crefname{ineq}{Ineq.}{Ineqs.}
\Crefname{ineq}{Inequality}{Inequalities}
\creflabelformat{ineq}{~\upshape(#2#1#3)}
\crefname{itm}{}{}
\creflabelformat{itm}{~\upshape(#2#1#3)}
\crefname{conjecture}{Conjecture}{Conjectures}
\crefname{sec}{\S}{\S}
\Crefname{sec}{Section}{Sections}
\creflabelformat{sec}{\upshape#2#1#3}

\newtheorem{theorem}{Theorem}[section]
\newtheorem{corollary}[theorem]{Corollary}
\newtheorem{conjecture}[theorem]{Conjecture}
\newtheorem{lemma}[theorem]{Lemma}
\newtheorem{proposition}[theorem]{Proposition}

\numberwithin{equation}{section}

\allowbreak
\allowdisplaybreaks

\parskip 8pt
\hoffset -25truemm
\oddsidemargin=25truemm
\evensidemargin=25truemm
\textwidth=155truemm
\voffset -25truemm
\topmargin=25truemm
\headheight=7truemm
\headsep=5truemm
\textheight=220truemm
\baselineskip=16pt

\usepackage[numbers, sort&compress, nonamebreak, merge, elide, longnamesfirst]{natbib}

\usepackage{url}

\usepackage{tikz}
\usetikzlibrary{calc, positioning}
\tikzset{
edge/.style={semithick},
ball/.style={shape=circle, minimum size=1mm, ball color=black, inner sep=0.5},
ellipsis/.style={shape=circle, fill, inner sep=.5}}
\def\r{1}
\def\eps{.2}

\title[Neat formulas for chromatic symmetric functions]
{A composition method for neat formulas of chromatic symmetric functions}

\author[D.G.L.~Wang]{David G.L. Wang$^*$}
\address[David G.L. Wang]{School of Mathematics and Statistics \& MIIT Key Laboratory of Mathematical Theory and Computation in Information Security, Beijing Institute of Technology, Beijing 102400, P. R. China.}
\email{glw@bit.edu.cn}
\thanks{$^*$Wang is the corresponding author, and is supported by the NSFC (Grant No.\ 12171034).}

\author[J.Z.F. Zhou]{James Z.F. Zhou}
\address[James Z.F. Zhou]{School of Mathematics and Statistics, Beijing Institute of Technology, Beijing 102400, P. R. China.}
\email{james@bit.edu.cn}

\keywords{chromatic symmetric functions,
$e$-positivity,
noncommutative symmetric functions,
ribbon Schur functions,
Schur positivity}
\subjclass[2020]{05E05, 05A15}


\begin{document}

\bibliographystyle{abbrvnat}

\begin{abstract}
We develop a composition method to unearth positive $e_I$-expansions of chromatic symmetric functions $X_G$, where the subscript $I$ stands for compositions rather than integer partitions. Using this method, we derive positive and neat $e_I$-expansions for the chromatic symmetric functions of tadpoles, barbells and generalized bulls, and establish the $e$-positivity of hats. We also obtain a compact ribbon Schur analog for the chromatic symmetric function of cycles. 
\end{abstract}
\maketitle
\tableofcontents

\section{Introduction}
 
In his seminal paper \cite{Sta95},
\citeauthor{Sta95} introduced
the concept of the chromatic symmetric function~$X_G$ for any graph $G$,
which tracks proper colorings of $G$.
It is a generalization of Birkhoff's chromatic
symmetric polynomial $\chi_G$ in the study of the $4$-color problem.
Chromatic symmetric functions encode 
many graph parameters and combinatorial structures, 
such like the number of vertices, edges and triangles,
the girth, and 
the lattice of contractions, 
see \citet{MMW08} and
\cite[Page~167]{Sta95}.
For any basis $b$ of the algebra $\mathrm{Sym}$ of symmetric functions,
a graph $G$ is said to be \emph{$b$-positive}
if every $b$-coefficient of $X_G$
is nonnegative.
\citet[Section~5]{Sta95}
brought forward the question that
which graphs are $e$-positive, 
and asserted that a complete characterization of $e$-positive
graphs ``appears hopeless.''
He restated \citeauthor{SS93}'s $(3+1)$-free conjecture \cite{SS93},
which became a leading conjecture in the study of chromatic symmetric functions henceforth.
\begin{conjecture}[\citeauthor{SS93}]\label{conj:epos}
The chromatic symmetric function of the incomparability graph
of every $(3+1)$-free poset is 
$e$-positive.
\end{conjecture}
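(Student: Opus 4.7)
The plan is to attack the conjecture via the composition method developed in this paper, after first invoking a standard reduction. I would begin by applying Guay-Paquet's theorem, which reduces the problem to establishing $e$-positivity for the natural unit interval graphs $G$ parametrized by Dyck paths (equivalently, by Hessenberg functions). This converts the structural hypothesis ``$(3+1)$-free'' into a concrete combinatorial object on which to induct, and matches the framework in which the composition method of this paper operates.

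Second, I would try to produce, for every such $G$, an explicit nonnegative expansion
\[
X_G \;=\; \sum_I c_{G,I}\, e_I,
\]
obtained by an inductive decomposition modelled on the tadpole, barbell, generalized bull, and hat results established earlier in the paper. Concretely, one would select a vertex $v$ corresponding to an extremal step of the Dyck path, use a local deletion/contraction identity to relate $X_G$ to $X_{G-v}$ and the chromatic symmetric functions of a few smaller graphs, and then combine the known composition expansions of the smaller pieces via a positive identity on ribbon Schur functions or products of elementary symmetric functions. The multiplicative structure of ribbons is a natural candidate, since attaching a vertex with a prescribed neighbourhood should translate into concatenating or near-concatenating compositions, which is exactly the operation the composition method is designed to track.

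The decisive obstacle, and the reason the conjecture has resisted proof for three decades, is that no single local recurrence is known to work across all Dyck paths simultaneously: the structural features exploited in the tractable families of this paper (a long cycle, bridge edges, a small number of high-degree vertices) are absent in the general unit interval graph. I would expect a complete argument along these lines to require two further ingredients: first, a combinatorial statistic on Dyck paths that dictates the inductive order and the vertex choice at each step; and second, a sign-reversing involution on some auxiliary set of $P$-tableaux or heap configurations that cancels the negative contributions produced by a naive application of the recurrence. Constructing such an involution compatibly with the composition expansion, and proving that it respects the statistic in the first ingredient, is where I expect the real difficulty to lie; the preceding reductions and inductive scaffolding are, by comparison, routine.
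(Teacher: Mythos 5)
The statement you are asked to prove is \cref{conj:epos}, the Stanley--Stembridge $(3+1)$-free conjecture. The paper does not prove it; it records it as an open conjecture (and indeed describes it as ``a leading conjecture in the study of chromatic symmetric functions''), so there is no proof in the paper to compare against. What matters, then, is whether your proposal constitutes a proof on its own. It does not. Your first step, the reduction via Guay-Paquet to unit interval graphs, is correct and standard, and is exactly the reduction the paper cites. But everything after that is a research programme rather than an argument: you state that one ``would try to produce'' a nonnegative $e_I$-expansion by an inductive decomposition, and then you yourself identify the decisive obstacle --- that no local recurrence is known to work uniformly across all Dyck paths, and that the argument would require an as-yet-unconstructed statistic governing the induction together with an as-yet-unconstructed sign-reversing involution cancelling the negative terms. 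Those two missing ingredients are not technical details to be filled in; they are the entire content of the conjecture. Without exhibiting the recurrence, verifying that the chosen vertex always exists and strictly decreases the statistic, defining the involution, and proving it is sign-reversing and weight-preserving on the surviving terms, nothing has been established.

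Concretely, the step that would fail if you tried to execute it is the claim that a ``local deletion/contraction identity'' relates $X_G$ to smaller unit interval graphs with controllable composition expansions. The triple-deletion property of \cref{thm:3del} requires a stable triple with a specific edge configuration, and for a general unit interval graph the graphs produced on the right-hand side need not remain in any family for which a positive $e_I$-expansion is known; the negative term it introduces is precisely what you have no mechanism to cancel. The families handled in this paper (tadpoles, barbells, hats, generalized bulls) are tractable exactly because their recursions bottom out in paths, cycles, and complete graphs with known expansions, and that closure property is what breaks in general. So the proposal is a reasonable description of how one might hope to extend the composition method, but it contains no proof of \cref{conj:epos}.
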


\citet{Gas96P} confirmed the Schur positivity of the graphs 
in \cref{conj:epos},
which are all claw-free.
\citet{Sta98} then proposed 
the following Schur positivity conjecture 
and attributed it to \citeauthor{Gas96P},
see also \citet{Gas99}.
\begin{conjecture}[\citeauthor{Sta98} and \citeauthor{Gas99}]
\label{conj:spos}
Every claw-free graph is Schur positive.
\end{conjecture}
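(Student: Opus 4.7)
The plan is to combine the composition method of this paper with Chudnovsky and Seymour's structural classification of claw-free graphs, aiming to extend Gasharov's proof of the $(3+1)$-free case of \cref{conj:epos} beyond incomparability graphs. Since every elementary symmetric function satisfies $e_\lambda = \prod_i s_{1^{\lambda_i}}$ and is therefore Schur positive by the Littlewood--Richardson rule, any positive $e_I$-expansion produced by the composition method automatically certifies Schur positivity. It would thus suffice to exhibit such expansions for each family appearing in the decomposition of a claw-free graph, in parallel with the positive $e_I$-formulas for tadpoles, barbells, generalized bulls, and hats already established in this paper.

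First, I would pass to a noncommutative lift $\mathbf{X}_G \in \mathrm{NSym}$ whose commutative image is $X_G$ and whose ribbon expansion records proper colorings refined by an ordering of color classes, so that ribbon positivity of the lift implies Schur positivity of $X_G$. Next, I would treat each class of basic building blocks in the Chudnovsky--Seymour decomposition -- line graphs of triangle-free multigraphs, circular interval graphs, and three-cliqued antiprismatic graphs -- and attempt to construct ribbon-positive lifts for each. Finally, I would verify that the gluing operations used to assemble arbitrary claw-free graphs from these blocks (strips, thickenings, hex-type expansions) preserve ribbon positivity, extending the compositional reductions already developed here.

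The main obstacle will almost certainly be the circular interval and antiprismatic pieces, in which no canonical linear order on the vertices is available and the composition method loses its anchor. Gasharov's $P$-tableau argument exploited precisely such an order coming from $(3+1)$-freeness, and without it one appears forced to introduce cyclic or non-linear analogues of ribbon Schur functions whose commutative images remain Schur positive. Demonstrating Schur positivity of the resulting cyclic generating functions -- ideally through an explicit sign-reversing involution on pairs consisting of a proper coloring and a tableau -- is the point at which \cref{conj:spos} has resisted every prior attack, and a complete proof would have to overcome precisely this difficulty.
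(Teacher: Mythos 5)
There is no proof to compare against: the statement you were asked about is \cref{conj:spos}, which the paper states as an \emph{open conjecture} (attributed to Stanley and Gasharov) and makes no attempt to prove. Your submission is likewise not a proof but a research programme, and you concede as much in your final sentence, where you locate the unresolved core of the argument exactly at the point where, in your own words, the conjecture ``has resisted every prior attack.'' A plan whose decisive step is acknowledged to be the open problem itself establishes nothing.

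Beyond that, the strategy of your first paragraph is provably unworkable. A positive $e_I$-expansion of $X_G$ is equivalent to $e$-positivity of $G$, which is strictly stronger than Schur positivity, and there exist claw-free graphs that are not $e$-positive: the net (a triangle with a pendant vertex attached to each of its three vertices) is claw-free, yet Stanley already observed in \cite{Sta95} that it is not $e$-positive (this is also the point of the claw-contractible-free discussion in \cite{DFv20} cited in the introduction). Hence no positive $e_I$-expansion exists for every building block of the Chudnovsky--Seymour decomposition, and the composition method in the form used for tadpoles, barbells, hats and generalized bulls cannot certify \cref{conj:spos}. Your fallback of demanding ribbon-positive lifts is also too strong: the paper notes that the noncommutative analog of $X_{S(a,2,1)}$ obtained in \cite{TW23X} is \emph{not} ribbon positive, and Schur positivity there has to be extracted by a Littlewood--Richardson argument with injections on Yamanouchi words. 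Any viable attack would have to work at the level of (skew or ribbon) Schur expansions with signs, not at the level of positive $e_I$- or $R_I$-expansions, and the sign-reversing involution you defer to the last paragraph is precisely the missing mathematics.
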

\citet{SW12} introduced the notion of chromatic quasisymmetric functions,
refined \citeauthor{Gas96P}'s Schur positivity result,
and unveiled connections between \cref{conj:epos} 
and representation theory.
By \citeauthor{Gua13X}'s reduction \cite{Gua13X},
\cref{conj:epos} can be restated as that
every unit interval graph, or equivalently,
every claw-free interval graph, is $e$-positive.
These conjectures thereby charm graph theorists that are fascinated by 
claw-free graphs and interval graphs,
see \citet{FFR97} for an early survey on claw-free graphs,
and \citet{COS10} for 
wide applications of interval graphs.
The Schur positivity of interval graphs can be shown
by using a result of \citet{Hai93}. 
\citeauthor{Hai93}'s proof used Kazhdan and Lusztig's conjectures
that were confirmed later, see~\cite[Page 187]{Sta95}.

Technically speaking, 
to show that a graph is not $e$-positive or not Schur positive 
is comparably undemanding, 
in the sense that the demonstration 
of a negative $e_\lambda$- or $s_\lambda$-coefficient
for a particular partition $\lambda$ is sufficient,
which may call for a scrupulous selection of $\lambda$ though.
For instance,
\citet{WW23-DAM} 
proved the non-$e$-positivity 
and non-Schur positivity of some spiders and brooms.
Two common criteria for the non-positivity are
\citeauthor{Wol97D}'s connected partition criterion
and \citeauthor{Sta98}'s stable partition criterion,
see \cite{Wol97D} and \cite{Sta98} respectively.

In contrast, to confirm that a graph is $e$-positive is seldom easy.
\citet{Sta95} studied paths and cycles 
by displaying the generating functions 
of their chromatic symmetric functions,
whose Taylor expansions indicate the $e$-positivity as plain sailing.
\citet{GS01} lifted $X_G$
up to certain~$Y_G$ in the algebra $\mathrm{NCSym}$ 
of symmetric functions in noncommutative variables,
so that $X_G$ equals the commutative image of~$Y_G$.
They developed a theory for certain $(e)$-positivity of $Y_G$,
which leads to the $e$-positivity of $X_G$.
In particular,
$K$-chains are $e$-positive.
\citet{Tom24} 
obtained an $e$-expansion of the chromatic symmetric function
of a general unit interval graph in terms of ``forest triples,''
and used it to reconfirm the $e$-positivity of $K$-chains.
\citet{Dv20} classified when~$Y_G$ 
is a positive linear combination 
of the elementary symmetric functions in noncommuting variables.
Via this $Y_G$-approach,
\citet{WW23-JAC} uncovered the $e$-positivity of two 
classes of cycle-chords.
\citet{AWv24} reinterpreted the equivalence idea 
for the $(e)$-positivity 
in terms of the quotient algebra $\mathrm{UBCSym}$ of $\mathrm{NCSym}$
and obtained the $e$-positivity of kayak paddle graphs.
An example of using chromatic quasisymmetric functions
to show the $e$-positivity can be found from \citet{HNY20}
for melting lollipops.

We think the plainest way of confirming the $e$-positivity of 
a graph $G$ is to compute $X_G$
out and make certain that the $e_\lambda$-coefficient 
for each partition $\lambda$ is nonnegative.
A variant idea is to recast~$X_G$
as a linear combination of $e$-positive chromatic symmetric functions
with positive coefficients,
see \citet{Dv18} for a treatment of lollipops for example.
Up to late~2023,
to the best of our knowledge, 
only complete graphs, paths, cycles, melting lollipops, $K$-chains,
and slightly melting $K$-chains
own explicit formulas of chromatic symmetric functions,
see \cref{sec:csf} and \citet{Tom24}.
In this paper,
we conceive a new approach along this way, called 
the \emph{composition method}.

We were inspired from \citeauthor{SW16}'s discovery 
\begin{equation}\label{X.path}
X_{P_n}
=\sum_{I=i_1 i_2\dotsm\vDash n}
w_I 
e_I
\end{equation}
for paths $P_n$, 
where the sum runs over compositions $I$ of $n$, and
\begin{equation}\label[def]{def:w}
w_I
=
i_1
\prod_{j\ge 2}
(i_j-1).
\end{equation}
They \cite[Table~1]{SW16} obtained \cref{X.path} by
using Stanley's generating function for Smirnov words,
see also \citet[Theorem 7.2]{SW10}. 
An equally engaging formula for cycles was brought to light by
\citet{Ell17}, see \cref{prop:cycle}.

The composition method
is to expand a chromatic symmetric function $X_G$
in the elementary symmetric functions $e_I$
which are indexed by compositions $I$.
This idea can be best understood through \cref{X.path}.
The $e_I$-coefficients,
taking \cref{def:w} for example,
are functions defined for compositions.
See \cref{sec:CompFn} for more examples.
An ordinary $e_\lambda$-coefficient 
for any partition $\lambda$ is 
the sum of ``the $e_I$-coefficients'' 
over all compositions~$I$
that can be rearranged as $\lambda$; 
we write this property of $I$ as 
\begin{equation}\label{def:rho}
\rho(I)=\lambda.
\end{equation}
Here arises a potential ambiguity
about the wording ``the $e_I$-coefficient''.
Namely,  
when the parts of~$I$ decrease weakly and 
so $I$ 
coincides with $\lambda$,
it
may be understood as 
either the coefficient of $e_I$ in some $e_I$-expansion
or the coefficient of $e_\lambda$ 
in the unique $e$-expansion of $X_G$.
This ambiguity comes from the unspecification
of the background algebra, which leads us to 
the algebra $\mathrm{NSym}$ of noncommutative symmetric functions,
see \cref{sec:sf,sec:NSym} for details.

In order to give a step by step instruction for applying the composition method, we need some basic knowledge of the algebra $\mathrm{NSym}$.
First, the commutative images of the basis elements 
$\Lambda^I$ and~$\Psi^I$
of $\mathrm{NSym}$
are the elementary and
power sum symmetric functions $e_{\rho(I)}$ and $p_{\rho(I)}$, respectively.
Second, every symmetric function 
$\sum_{\lambda\vdash n}c_\lambda e_\lambda$
has an infinite number of noncommutative analogs 
$\sum_{I\vDash n}c_I'\Lambda^I$
in $\mathrm{NSym}$,
in which only a finite number are $\Lambda$-positive
with integer coefficients.
Third,
a symmetric function is $e$-positive 
if and only if it has a $\Lambda$-positive noncommutative analog.
For the purpose of
showing the $e$-positivity of a chromatic symmetric function $X_G$,
one may follow the steps below.
\begin{description}
\item[Step 1]
Initiate the argument by 
deriving a noncommutative analog $\widetilde X_G$ in its $\Lambda$-expansion.
We know two ways to achieve this. 
One is to start from the $p$-expansion of~$X_G$ by definition,
which implies 
the $\Psi$-expansion of a noncommutative analog directly.
Then we transform the analog
to its $\Lambda$-expansion by change-of-basis,
see \cref{sec:appendix} for this approach working for cycles.
The other way is to compute~$X_G$ by applying 
\citeauthor{OS14}'s \emph{triple-deletion property} \cite{OS14},
and by using graphs with known $e_I$-expansions,
see \cref{thm:tadpole} for this way working for tadpoles.
\item[Step 2]
Find a positive $e_I$-expansion.
Decompose the set of all compositions of $n=\abs{V(G)}$
as $\mathcal I^{(1)}\sqcup\dotsm\mathcal \sqcup I^{(l)}$, such that 
\begin{enumerate}
\item
$\widetilde X_G
=\sum_{k=1}^l
\sum_{I\in\mathcal I^{(k)}}
c_I\Lambda^I$,
\item
the compositions in each $\mathcal I^{(k)}$ 
have the same underlying partition, say, $\lambda^{(k)}$,
and
\item
the inner sum for each $k$ 
has an $e$-positive commutative image, i.e., 
$\sum_{I\in\mathcal I^{(k)}}c_I\ge 0$.
\end{enumerate}
It follows that 
\begin{equation}\label{cm}
X_G
=\sum_{k=1}^l
\brk4{
\sum_{I\in\mathcal I^{(k)}}
c_I
}
e_{\lambda^{(k)}}
\end{equation}
is a positive $e_I$-expansion.
\item[Step 3]
Produce a neat $e_I$-expansion by shaping \cref{cm}.
One thing we can do is to simplify each of the coefficients 
$\sum_{I\in\mathcal I^{(k)}}c_I$ for given composition functions $c_I$.
Another thing is to further merge the terms 
for distinct indices, say $k$ and $h$, 
with the same underlying partition $\lambda^{(k)}=\lambda^{(h)}$.
Sign-reversing involutions, injections and bijections
may help embellish expressions to make them compact and elegant.
\end{description}

One may catch a whiff of the combinatorial essence of 
the composition method from each of the steps.
Besides suitably selecting a vertex triple to apply the triple-deletion
property,
a vast flexibility lies 
in both the process of decomposing and coefficient shaping.
We wish that the $e$-positivity of \cref{cm}
is as transparent as the $e$-positivity in \cref{X.path}.
Step $3$ is not necessary 
for the sole purpose of positivity establishment,
however, 
it would be computationally convenient if 
we make use of a neat $e_I$-expansion 
in proving the $e$-positivity of graphs that are of more complex.

In this paper,
we start the journey of understanding
the computing power of
the composition method
in proving the $e$-positivity of graphs.

After making necessary preparations in \cref{sec:preliminary},
we apply the composition method for special families of graphs in \cref{sec:formulas}.
We work out neat formulas for tadpoles and barbells.
The former are particular squids that were investigated by \citet{MMW08}, see also \citet{LLWY21},
while the latter contains lollipops, lariats and dumbbells as specializations.
Using the composition method,
we also establish the $e$-positivity of hats.
The family of hats 
contains both tadpoles and generalized bulls.
Our result for hats induces a second $e_I$-expansion
for tadpoles.
The family of generalized bulls
was listed as an infinite collection of $e$-positive claw-free graphs 
that are not claw-contractible-free by \citet[Section~$3$]{DFv20}.
We also consider the line graphs of tadpoles,
since the line graph of any graph is claw-free,
which is a key condition in both \cref{conj:epos,conj:spos}.

An early try of the composition method
towards Schur positivity is 
\cite{TW23X}, in which \citeauthor{TW23X}
obtained the ribbon Schur expansion
of a noncommutative analog
for spiders of the form $S(a,2,1)$.
They are not ribbon positive. 
This analog yields a skew Schur expansion of~$X_{S(a,2,1)}$.
By the Littlewood--Richardson rule,
the ordinary Schur coefficients are by that means 
multiset sizes of Yamanouchi words,
and the Schur positivity then follows by injections.
A similar proof for the Schur positivity of spiders of the form $S(a,4,1)$
is beyond uncomplicated. 
We thereby expect more satisfying applications of the composition method
in establishing the Schur positivity of graphs.
In this paper, 
we give a compact ribbon Schur analog for the chromatic symmetric function of cycles, see \cref{thm:ribbon.Schur:cycle}.

\section{Preliminaries}\label[sec]{sec:preliminary}

This section contains necessary notion and notation, 
basic results on commutative symmetric functions, 
chromatic symmetric functions, and noncommutative symmetric functions,
that will be of use.

\subsection{Compositions and partitions}
We use terminology from \citet{Sta11B}.
Let $n$ be a positive integer. 
A \emph{composition} of $n$ is 
a sequence of positive integers with sum~$n$,
commonly denoted 
$I
=i_1 \dotsm i_s
\vDash n$.
It has \emph{size} $\abs{I}=n$,
\emph{length} $\ell(I)=s$, and \emph{reversal}
$\overline{I}
=i_s i_{s-1}\dotsm i_1$.
The integers $i_k$ are called \emph{parts} of~$I$.
For notational convenience, 
we write $I=v^s$ if all parts have the same value $v$,
and denote the $k$th last part as $i_{-k}$;
thus $i_{-1}=i_s$.
We consider the number~$0$ to have a unique composition, 
denoted~$\epsilon$.
Whenever a capital letter such like $I$ and $J$
is adopted to denote a composition,
we use the small letter counterparts such as $i$ and $j$ respectively
with integer subscripts to denote the parts.
A \emph{factor} of $I$
is a subsequence that consists of consecutive parts.
A \emph{prefix} (resp., \emph{suffix}) of $I$
is a factor that starts from~$i_1$ (resp., ends at~$i_s$).
Denote by $m_k(I)$ 
the the number of parts $k$ in $I$, namely,
\begin{equation}\label[def]{def:mk}
m_k(I)
=\abs{\{j\in\{1,\dots,s\}\colon i_j=k\}}.
\end{equation}

A \emph{partition}\index{partition} of $n$
is a multiset of positive integers with sum $n$,
commonly denoted as
\[
\lambda=\lambda_1\lambda_2\dotsm
=1^{m_1(\lambda)}2^{m_2(\lambda)}\dotsm
\vdash n,
\]
where $\lambda_1\ge \lambda_2\ge\dotsm\ge 1$.
For any composition $I$, 
there is a unique partition $\rho(I)$ satisfying \cref{def:rho},
i.e., the partition obtained by rearranging the parts of $I$.
As partitions have \emph{Young diagrams} as graphic representation,
one uses the terminology \emph{ribbons} to illustrate compositions.
In French notation,
the ribbon for a composition~$I$
is the collection of boxes such that
\begin{itemize}
\item
Row $k$ consists of $i_k$ consecutive boxes, and
\item
the last box on Row $k$ and the first box on Row $k+1$ 
are in the same column.
\end{itemize}
In the theory of integer partitions, 
by saying a Young diagram $\lambda$ 
one emphasizes the geometric shape of the partition $\lambda$.
Being analogous in our composition calculus,
we phrase the wording ``a ribbon~$I$''
to call attention to the illustration of the composition $I$.

Following \citet{Mac1915B},
the \emph{conjugate} $I^\sim$ of a composition~$I$
is the ribbon consisting of the column lengths 
of~$I$ from right to left. 
This is different to the \emph{conjugate} 
$\lambda'$ of a partition $\lambda$,
whose Young diagram is obtained by turning rows into columns.
For example,
$32^\sim=121^2$
and
$32'=221$.
A \emph{refinement} of $I$ is a composition 
$J=j_1\dotsm j_t$ such that
\[
i_1
=
j_{k_{0}+1}+\dots+j_{k_1},\quad
\dots,\quad
i_s
=
j_{k_{s-1}+1}+\dots+j_{k_s},
\]
for some integers $k_0<\dots<k_s$,
where $k_0=0$ and $k_s=t$.
We say that $I$ is a \emph{coarsement} of $J$ 
if $J$ is a refinement of $I$.
The \emph{reverse refinement order} $\preceq$ for compositions 
is the partial order defined by
\[
I\preceq J
\iff
\text{$J$ is a refinement of $I$}.
\]
The \emph{first parts of blocks of~$J$ with respect to $I$}
are the numbers $j_{k_0+1},\dots,j_{k_{s-1}+1}$,
with product 
\[
f\!p(J,I)
=
j_{k_0+1}\dotsm j_{k_{s-1}+1}.
\]
The \emph{last parts of blocks of~$J$ with respect to $I$}
are the numbers $j_{k_1},\dots,j_{k_s}$,
with product 
\[
lp(J,I)
=
j_{k_1}\dotsm j_{k_s}.
\]
By definition, one may derive directly that 
\begin{equation}\label{lp=fp}
lp(\overline{J},\overline{I})
=
f\!p(J,I).
\end{equation}
For any compositions 
$I=i_1\dotsm i_s$
and
$J=j_1\dotsm j_t$,
the \emph{concatenation} 
of $I$ and $J$ is the composition
$I\!J
=
i_1
\dotsm
i_s
j_1
\dotsm
j_t$,
and the \emph{near concatenation} of $I$ and $J$ is the composition
\[
I\triangleright J
=
i_1
\dotsm
i_{s-1}
(i_s+j_1)
j_2
\dotsm
j_t.
\]
In French notation,
the ribbon $I\!J$ (resp., $I\triangleright J$)
is obtained by attaching the first box of~$J$
immediately below (resp., to the immediate right of)
the last box of $I$.

The \emph{decomposition of a ribbon $J$
relatively to a composition $I$}
is the unique expression 
\[
\nabla_I(J)
=
J_1\bullet_1
J_2\bullet_2
\dots
\bullet_{s-1} J_s,
\]
where $s=\ell(I)$,
each $J_k$ is a ribbon of size $i_k$, 
and each symbol $\bullet_k$ stands for
either the concatenation
or the near concatenation.
For instance,
\[
\nabla_{83}(5141)
=512\triangleright 21.
\]
We call the ribbons $J_k$ \emph{blocks} of $\nabla_I(J)$.
In the language of ribbons,
the block $J_k$
consists of the first~$i_k$ boxes 
of the ribbon that is obtained from~$J$
by removing the previous blocks $J_1,\dots,J_{k-1}$.

A \emph{hook} is a ribbon of the form $1^s t$ 
for some $s\ge 0$ and $t\ge 1$.
Every hook appears as the English letter~L or a degenerate one, 
that is, a horizontal ribbon $t$
or a vertical ribbon~$1^s$. 
Here we recognize the ribbon $1$ as horizontal.
Denote by~$\mathcal H_I$
the set of ribbons $J$ such that every block 
in the decomposition~$\nabla_I(J)$ is a hook. Then
\[
\mathcal H_n
=\{n,\
1(n-1),\
1^2(n-2),\
\dots,\
1^{n-2}2,\
1^n\}
\]
is the set of hooks of the composition~$n$ consisting of a single part. 
Moreover,
since every factor of a hook is still a hook,
we have $\mathcal H_n
\subseteq\mathcal H_I$
for all $I\vDash n$.
For example,
$\mathcal H_4
=\{4,\
13,\
1^22,\
1^4\}$,
$\mathcal H_{31}
=\mathcal H_4\cup\{31,\
121\}$,
and
$\mathcal H_{13}
=\mathcal H_4\cup\{22,\
21^2\}$.
Let $I=i_1\dotsm i_s$.
By definition,
the set $\mathcal H_I$ 
is in a bijection with the set
\[
\{
J_1\bullet_1
J_2\bullet_2
\dots
\bullet_{s-1} J_s
\colon
J_k\in\mathcal H_{i_k}\text{ for $1\le k\le s$, and }
\bullet_k\in\{\triangleleft,\,\triangleright\}
\text{ for $1\le k\le s-1$}
\},
\]
where the symbol $\triangleleft$ stands for the concatenation operation.
As a consequence, one may calculate
$\abs{\mathcal H_I}=2^{s-1}i_1\dotsm i_s$.

\subsection{Commutative symmetric functions}\label[sec]{sec:sf}
We give an overview of necessary notion and notation for 
the theory of commutative symmetric functions. 
For comprehensive references,
one may refer to \citet{Sta23B} and \citet{MR15B}.
Let $R$ be a commutative ring with identity. 
A \emph{symmetric function} of homogeneous degree $n$ 
over~$R$ is a formal power series
\[
f(x_1, x_2, \dots)
=
\sum_{\lambda=\lambda_1 \lambda_2 \dotsm \vdash n}
c_\lambda \cdot x_1^{\lambda_1} x_2^{\lambda_2} \dotsm, 
\quad \text { where } c_\lambda \in R,
\]
such that 
$f(x_1, x_2, \dots)
=f(x_{\pi(1)}, x_{\pi(2)}, \dots)$ for any permutation $\pi$.
Denote by $\mathbb Q$ the field of rational numbers. 
Define $\operatorname{Sym}^0=\mathbb Q$, 
and define $\operatorname{Sym}^n$ 
to be the vector space of homogeneous symmetric functions 
of degree $n$ over $\mathbb Q$. 
Common bases of $\mathrm{Sym}^n$ include
the \emph{elementary symmetric functions}~$e_\lambda$,
the \emph{complete homogeneous symmetric functions}~$h_\lambda$,
the \emph{power sum symmetric functions}~$p_\lambda$,
and \emph{the Schur symmetric functions}~$s_\lambda$.
The first three ones are multiplicatively defined by
\[
b_\lambda
=b_{\lambda_1}\dotsm b_{\lambda_l},
\quad\text{for $b\in\{e,h,p\}$ 
and for any partition 
$\lambda=\lambda_1\dotsm\lambda_l$},
\]
where
\[
e_k
=\sum_{1\le i_1<\dots<i_k} 
x_{i_1} \dotsm x_{i_k},
\quad
h_k
=\sum_{1\le i_1\le \dots \le i_k}
x_{i_1} \dots x_{i_k},
\quad\text{and}\quad
p_k
=\sum_{i\ge 1} x_i^k.
\]
The Schur symmetric function $s_\lambda$ 
can be defined combinatorially by
$s_\lambda
=\sum_{T\in\mathrm{CS}_\lambda}w(T)$,
where $\mathrm{CS}_\lambda$ 
is the set of column strict tableaux of shape $\lambda$,
and the weight $w(T)$ is the product of $x_i$ for all entries~$i$ in $T$.
Here a tableau of shape $\lambda$ is said to be \emph{column strict} if
\begin{itemize}
\item
the entries in each row weakly increase, and
\item
the entries in each column strictly increase 
starting from the longest row; 
this is to say from bottom to top in French notation.
\end{itemize}
The Schur symmetric functions are said to be
``the most important basis for $\mathrm{Sym}$ 
with respect to its relationship 
to other areas of mathematics'' 
and ``crucial in understanding the representation theory 
of the symmetric group,''
see \cite[Page 37]{MR15B}.

For any basis $\{b_\lambda\}$ of $\mathrm{Sym}^n$ and
any symmetric function $f\in \mathrm{Sym}^n$,
the \emph{$b_\lambda$-coefficient} of $f$
is the unique number $c_\lambda$ such that 
$f=\sum_{\lambda\vdash n}c_\lambda b_\lambda$,
denoted $[b_\lambda]f=c_\lambda$.
The symmetric function $f$ is said to be \emph{$b$-positive}
if every $b$-coefficient of $f$ is nonnegative.
For instance, 
every elementary symmetric function is Schur positive since 
$
e_\lambda
=
\sum_{\mu\vdash\abs{\lambda}} 
K_{\mu'\lambda}
s_\mu$,
where $K_{\mu'\lambda}$ are Kostka numbers,
see~\cite[Exercise~2.12]{MR15B}.

With the aid of the function $\rho$ defined by \cref{def:rho},
one may extend the domain of these basis symmetric functions 
from partitions to compositions.
Precisely speaking, 
one may define
$b_I=b_{\rho(I)}$
for any composition~$I$ 
and any basis $\{b_\lambda\}_\lambda$.
With this convention, 
we are safe to write $e_I$ instead 
of the redundant expression $e_{\rho(I)}$.
Since $\{e_I\}_{I\vDash n}$ is not a basis of $\mathrm{Sym}^n$,
the notation $[e_I]f$ is undefined.

\subsection{Chromatic symmetric functions}\label[sec]{sec:csf}

\citet{Sta95} introduced the \emph{chromatic symmetric function}
for a graph $G$ as
\[
X_G=\sum_\kappa \prod_{v \in V(G)} \mathbf{x}_{\kappa(v)},
\]
where $\mathbf{x}=\left(x_1, x_2, \ldots\right)$ 
is a countable list of indeterminates, 
and $\kappa$ runs over proper colorings of~$G$. 
Chromatic symmetric functions are particular symmetric functions,
and it is a generalization of 
Birkhoff's chromatic polynomials $\chi_G(k)$, since
$X_G(1^k00\dotsm)=\chi_G(k)$.
For instance, the chromatic symmetric function 
of the complete graph $K_n$ is 
\begin{equation}\label{X.complete}
X_{K_n}=n!e_n.
\end{equation}

We will need the $p$-expansion of $X_G$,
see \cite[Theorem 2.5]{Sta95}.

\begin{proposition}[\citeauthor{Sta95}]\label{prop:csf.p}
The chromatic symmetric function of a graph $G=(V,E)$ is
\[
X_G
=\sum_{E'\subseteq E}(-1)^{\abs{E'}}p_{\tau(E')}
\]
where $\tau(E')$ 
is the partition consisting of
the component orders of the spanning subgraph~$(V,E')$.
\end{proposition}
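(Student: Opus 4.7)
The plan is to prove the identity by inclusion-exclusion on the set of monochromatic edges. I would start from the definition of $X_G$ but relax the summation to run over all maps $\kappa\colon V\to\mathbb Z_+$, encoding properness by an indicator. For each edge $e=uv\in E$ and each coloring $\kappa$, set $M_\kappa(e)=[\kappa(u)=\kappa(v)]$. A coloring is proper if and only if $\prod_{e\in E}(1-M_\kappa(e))=1$, and expanding this product gives
\[
[\kappa\text{ is proper}]
=\sum_{E'\subseteq E}(-1)^{|E'|}\prod_{e\in E'}M_\kappa(e).
\]

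The next step is to substitute this into $X_G=\sum_\kappa[\kappa\text{ proper}]\prod_{v\in V}x_{\kappa(v)}$ and exchange the two summations:
\[
X_G
=\sum_{E'\subseteq E}(-1)^{|E'|}\sum_{\kappa\colon\,\kappa\text{ is constant on each edge of }E'}\prod_{v\in V}x_{\kappa(v)}.
\]
The condition $\prod_{e\in E'}M_\kappa(e)=1$ says exactly that $\kappa$ is constant on every edge of $E'$, which is in turn equivalent to $\kappa$ being constant on each connected component of the spanning subgraph $(V,E')$.

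The final step is to evaluate the inner sum. Colorings constant on each component of $(V,E')$ are in bijection with independent choices of a positive integer color for each component, so if the component orders of $(V,E')$ are $\tau_1,\tau_2,\dots$, then the inner sum factors as
\[
\prod_i\sum_{c\ge 1}x_c^{\tau_i}
=\prod_i p_{\tau_i}
=p_{\tau(E')},
\]
and the claimed formula drops out. I do not anticipate a genuine obstacle: the argument is purely bookkeeping. The only point that needs a little care is justifying the interchange of summations (which is legitimate because, after grouping by the monomial $\prod_v x_{\kappa(v)}$, each coefficient is a finite alternating sum), and keeping straight that $\tau(E')$ is read as a partition so that $p_{\tau(E')}$ is well-defined.
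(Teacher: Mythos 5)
Your proof is correct, and it is precisely the standard inclusion--exclusion argument: expand $[\kappa\text{ proper}]=\prod_{e\in E}(1-M_\kappa(e))$, swap sums, and factor the inner sum over component-constant colorings into power sums. The paper does not prove this statement itself but cites it as \citeauthor{Sta95}'s Theorem~2.5, and your argument is exactly \citeauthor{Sta95}'s original proof, so there is nothing to correct.
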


By \cite[Theorem 2.22]{MR15B}, 
every $e$-coefficient in a power sum symmetric function~$p_\mu$
is an integer.
It then follows from \cref{prop:csf.p} that every $e$-coefficient of $X_G$ is integral.
\citet[Corollary 3.6]{Sta95} presented 
the following quick criterion for the $e$-positivity.

\begin{proposition}[\citeauthor{Sta95}]\label{prop:epos:alpha<=2}
Any graph whose vertices can be partitioned into 
two cliques is $e$-positive.
\end{proposition}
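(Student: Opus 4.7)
The plan is to exploit the stable-partition expansion of $X_G$ together with the bipartite structure of $\bar G$. By grouping proper colorings of $G$ according to their color-class partitions, one obtains the identity
\[
X_G \;=\; \sum_\pi \tilde m_{\lambda(\pi)},
\]
with $\pi$ ranging over stable partitions of $V(G)$ (set partitions of $V$ whose blocks are independent in $G$), $\lambda(\pi)$ the multiset of block sizes, and $\tilde m_\lambda = \prod_i m_i(\lambda)!\, m_\lambda$ the augmented monomial symmetric function. Since $A$ and $B$ are cliques, every independent set of $G$ meets each of $A$ and $B$ in at most one vertex; so every block of $\pi$ has size at most $2$, and a $2$-block is necessarily a non-edge $\{u,v\}$ with $u\in A$ and $v\in B$, i.e.\ an edge of the bipartite graph $\bar G$. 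Hence stable partitions correspond bijectively to matchings $M\subseteq E(\bar G)$, giving
\[
X_G \;=\; \sum_{k=0}^{\lfloor n/2\rfloor} \mu_k(\bar G)\; \tilde m_{2^k 1^{n-2k}},
\]
where $\mu_k(\bar G)$ is the number of $k$-matchings in $\bar G$.

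Next I would convert each augmented monomial into the elementary basis using the two-variable identity
\[
\prod_{i\ge 1}\bigl(1+x_i+y\,x_i^2\bigr) \;=\; \prod_{i\ge 1}(1+\alpha x_i)(1+\beta x_i), \quad \alpha+\beta=1,\ \alpha\beta=y,
\]
whose right-hand side equals $E(\alpha)E(\beta)=\sum_{a,b\ge 0} e_a e_b\,\alpha^a\beta^b$. Symmetrising the monomials $\alpha^a\beta^b$ via the Lucas-type polynomials $P_m(y)=\alpha^m+\beta^m$ (which satisfy $P_m=P_{m-1}-yP_{m-2}$) and extracting the degree-$n$ component yields an explicit $e$-expansion of each $\tilde m_{2^k 1^{n-2k}}$ in terms of the $e_{(n-b,b)}$. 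Substituting back into the matching sum expresses each coefficient $[e_{(n-b,b)}]X_G$ as a specific linear combination of the matching counts $\mu_k(\bar G)$ weighted by Lucas polynomial coefficients.

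The main obstacle is then to verify that every such coefficient is nonnegative. The individual augmented monomials $\tilde m_{2^k 1^{n-2k}}$ are not $e$-positive—for instance $\tilde m_2 = e_1^2 - 2e_2$—and the Lucas polynomials carry alternating-sign coefficients in $y$; so positivity of $X_G$ must emerge from cancellations that hinge specifically on $\bar G$ being bipartite. I would expect to complete the verification either by invoking a permanental/determinantal identity for the matching polynomial of a bipartite graph, or by constructing a sign-reversing involution on the ``bad'' terms pairing up matchings of different sizes in $\bar G$. As a sanity check, the boundary case $\bar G=\emptyset$, i.e.\ $G=K_n$, collapses to $X_{K_n}=n!\,e_n$ by~\eqref{X.complete}, which is manifestly $e$-positive.
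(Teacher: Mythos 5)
The paper does not actually prove this proposition: it is quoted directly from Stanley's Corollary~3.6, so there is no in-paper argument to match yours against. Your reduction is nevertheless the standard one and, as far as it goes, is correct: stable partitions of $G$ have all blocks of size at most two because an independent set meets each clique at most once, the two-element blocks are exactly edges of the bipartite complement $\bar G$, hence $X_G=\sum_k\mu_k(\bar G)\,\tilde m_{2^k1^{n-2k}}$, and the factorization $1+x+yx^2=(1+\alpha x)(1+\beta x)$ correctly converts each $\tilde m_{2^k1^{n-2k}}$ into the span of the products $e_je_{n-j}$ (in particular it shows only partitions with at most two parts occur).

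The genuine gap is that you stop exactly where the proposition acquires content. After the change of basis, each coefficient $[e_{(n-j)j}]X_G$ is an \emph{alternating} combination of the matching numbers $\mu_k(\bar G)$, and you leave its nonnegativity as something you ``would expect to complete'' by an unspecified permanental identity or involution. This is not a routine detail: the expression $\sum_k\mu_k\tilde m_{2^k1^{n-2k}}$ is not $e$-positive for an arbitrary matching sequence. For example, taking $n=3$ and the matching numbers $\mu_0=1$, $\mu_1=3$ of the (non-bipartite) triangle gives
\begin{equation*}
\tilde m_{1^3}+3\tilde m_{21}
=6e_3+3(e_2e_1-3e_3)
=3e_{21}-3e_3,
\end{equation*}
which has a negative $e_3$-coefficient. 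So the positivity must be extracted from the specific fact that $(\mu_k)$ is the matching sequence of a bipartite graph on the two prescribed parts $A$ and $B$, and your proposal supplies no mechanism that uses this. The classical way to close the argument is to recognize the alternating combinations as (positive multiples of) the hit numbers of the board $\bar G\subseteq A\times B$, which are nonnegative because they count rook placements meeting $\bar G$ in exactly a prescribed number of cells; a sign-reversing involution would essentially have to reprove that fact. As written, your text is a correct plan whose only nontrivial step is missing.
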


Such graphs have several characterizations, such as
the complements of bipartite graphs 
and the incomparability graphs of $3$-free posets,
see~\citet[Theorem 5.3]{Gua13X}. 
\citet[Propositions~5.3 and 5.4]{Sta95} confirmed the $e$-positivity of paths and cycles.

\begin{proposition}[\citeauthor{Sta95}]\label{prop:gf:path+cycle}
Let $E(z)=\sum_{n\ge0} e_n z^n$ and $F(z)=E(z)-zE'(z)$.
Denote by $P_n$ the $n$-vertex path and by $C_n$ the $n$-vertex cycle.
Then
\[
\sum_{n\geq 0}X_{P_n}z^n
=
\frac{E(z)}{F(z)}
\quad\text{and}\quad
\sum_{n\geq 2}X_{C_n}z^n
=
\frac{z^2E''(z)}{F(z)}.
\]
As a consequence, paths and cycles are $e$-positive.
\end{proposition}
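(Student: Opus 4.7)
The plan is to begin from the $p$-expansion in \cref{prop:csf.p} and package each resulting double sum as a generating-function identity. The single engine is the logarithmic derivative
\[
Q(z) \;:=\; \sum_{k\ge 1}(-1)^{k-1}p_k z^k \;=\; z\,\frac{d}{dz}\sum_{i\ge 1}\log(1+x_iz) \;=\; \frac{zE'(z)}{E(z)},
\]
so that $1 - Q(z) = F(z)/E(z)$.

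For the path $P_n$, an edge subset $E' \subseteq E(P_n)$ decomposes the vertex set into subpaths whose orders form a composition $I \vDash n$ with $\abs{E'} = n - \ell(I)$. Hence \cref{prop:csf.p} gives $X_{P_n} = \sum_{I \vDash n} (-1)^{n - \ell(I)} p_I$, and grouping by $k = \ell(I)$ yields
\[
\sum_{n \ge 0} X_{P_n} z^n \;=\; \sum_{k \ge 0} Q(z)^k \;=\; \frac{1}{1 - Q(z)} \;=\; \frac{E(z)}{F(z)}.
\]

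For the cycle $C_n$ with $n \ge 2$, edge subsets split into $E' = E(C_n)$ (contributing $(-1)^n p_n$) and proper subsets, whose component arcs form a cyclic composition of $n$. Converting to linear compositions via the standard cyclic-to-linear orbit count attaches each $I \vDash n$ of length $k$ to exactly $n/k$ proper subsets, so the proper-subset sum becomes $\sum_{I\vDash n}(-1)^{n-\ell(I)}(n/\ell(I))p_I$. Writing $q_i = (-1)^{i-1} p_i z^i$, the symmetry identity $\sum_{i_1,\dots,i_k}(i_1+\cdots+i_k)\prod_j q_{i_j} = k \cdot zQ'(z)\cdot Q(z)^{k-1}$ cancels the $1/k$, and summing over $k \ge 1$ contributes $zQ'(z)/(1-Q(z))$. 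After combining with the $E'=E(C_n)$ term and adjusting the $n=1$ boundary, I arrive at
\[
\sum_{n \ge 2}X_{C_n}z^n \;=\; -Q(z) \;+\; \frac{zQ'(z)}{1 - Q(z)}.
\]
Substituting $Q = zE'/E$, $1 - Q = F/E$, and using $E - F = zE'$ to cancel the $-Q(1-Q)$ piece against the corresponding term of $zQ'$, the numerator collapses to $z^2 EE''$, leaving $z^2 E''(z)/F(z)$.

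The $e$-positivity conclusion is then essentially automatic. Since $F(z) = 1 - \sum_{n \ge 2}(n-1)e_n z^n$, the geometric expansion of $1/F(z)$ is a nonnegative integer combination of products of $e_n$'s; multiplying by $E(z)$ or by $z^2 E''(z) = \sum_{n \ge 2} n(n-1) e_n z^n$ preserves $e$-positivity. The only real obstacle I anticipate is the cycle bookkeeping---extracting the $n/\ell(I)$ weight from the cyclic-to-linear passage, isolating the outlier $E' = E(C_n)$ term, and shepherding the three-piece expression $-Q + Q^2 + zQ'$ into the compact form $z^2 EE''$. The path case and the positivity conclusion are routine once the log-derivative identity for $Q$ is in hand.
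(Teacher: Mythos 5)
Your derivation is correct, but there is nothing in the paper to compare it against directly: \cref{prop:gf:path+cycle} is quoted from Stanley with no in-house proof, and what you have reconstructed is essentially Stanley's original transfer-matrix argument. Every step checks: $Q(z)=\sum_{k\ge1}(-1)^{k-1}p_kz^k=zE'(z)/E(z)$, so $1-Q=F/E$ and the path sum telescopes to $\sum_{k\ge0}Q^k=E/F$; for the cycle, $zQ'=Q-Q^2+z^2E''/E$ makes $-Q+zQ'/(1-Q)$ collapse to $z^2E''/F$ exactly as you say; and $F=1-\sum_{n\ge2}(n-1)e_nz^n$ gives the $e$-positivity of $1/F$ and hence of both series. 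The closest the paper comes to your cycle computation is \cref{lem:Psi:cycle}, which reaches the same $p$-expansion of $X_{C_n}$ by anchoring the arc decomposition at a fixed vertex $v_1$, so that each composition $I$ carries weight $i_1$ rather than your $n/\ell(I)$; the two bookkeepings agree in aggregate because the first parts of the compositions rearranging to a fixed $\lambda\vdash n$ with $k$ parts average to $n/k$. The paper then converts that lemma into \cref{prop:cycle} via the composition method in \cref{sec:appendix}, whereas your route stays entirely inside generating functions; your version is shorter but yields only the rational form, not the explicit $e_I$-coefficients. One wording caution: the claim that each composition $I$ of length $k$ ``attaches to exactly $n/k$ proper subsets'' is not literally true composition by composition --- in $C_4$ the composition $31$ arises from all four subsets obtained by removing two adjacent edges, not from $4/2=2$ of them. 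What your computation actually needs, and what is true, is the weighted identity
\[
\sum_{E'\subsetneq E(C_n),\ n-\lvert E'\rvert=k} p_{\tau(E')}
=\frac{n}{k}\sum_{I\vDash n,\ \ell(I)=k} p_{\rho(I)},
\]
obtained by counting pairs (subset, distinguished removed edge): each such pair determines a linear composition, each composition has exactly $n$ preimage pairs, and each subset lies in exactly $k$ pairs. State the cyclic-to-linear passage in that averaged form and the proof is complete.
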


Explicit formulas for the $e$-coefficients of $X_{P_n}$ and $X_{C_n}$
were obtained by extracting the coefficients of these generating functions,
see \citet[Theorem~3.2]{Wol98}.
\citet{SW16} obtained the much simpler \cref{X.path} for paths.
\citet[Corollary~6.2]{Ell17} gave a formula
for the chromatic quasisymmetric function of cycles,
whose $t=1$ specialization is an
equally simple one.

\begin{proposition}[\citeauthor{Ell17}]\label{prop:cycle}
For $n\ge 2$,
$X_{C_n}
=\sum_{I\vDash n}
(i_1-1)
w_I
e_I$.
\end{proposition}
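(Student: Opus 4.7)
The plan is to verify the formula by matching generating functions via \cref{prop:gf:path+cycle}, which already supplies
\[
\sum_{n\ge 2} X_{C_n} z^n = \frac{z^2 E''(z)}{F(z)}, \qquad F(z)=E(z)-zE'(z).
\]
To separate the two roles a part $i_j$ plays in the claimed coefficient $(i_1-1) w_I = i_1(i_1-1)\prod_{j\ge 2}(i_j-1)$, I would introduce
\[
S(z) := \sum_{k\ge 1} k(k-1)\,e_k z^k = z^2 E''(z), \qquad Q(z) := \sum_{k \ge 1}(k-1)\,e_k z^k.
\]
A one-line calculation gives $Q(z) = zE'(z) - E(z) + 1 = 1 - F(z)$, so the identity of \cref{prop:gf:path+cycle} rewrites as
\[
\sum_{n \ge 2} X_{C_n} z^n \;=\; \frac{S(z)}{1 - Q(z)} \;=\; S(z) \sum_{s \ge 1} Q(z)^{s-1}.
\]

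Second, I would extract the coefficient of $z^n$ by direct expansion. The factor $S(z)$ distinguishes a first part $i_1$ with weight $i_1(i_1-1)\, e_{i_1}$, while each of the $s-1$ factors in $Q(z)^{s-1}$ contributes a later part $i_j$ with weight $(i_j-1)\, e_{i_j}$. Summing over $s \ge 1$ and over compositions $i_1 \cdots i_s \vDash n$ produces
\[
[z^n]\, \frac{z^2 E''(z)}{F(z)}
\;=\; \sum_{s \ge 1} \sum_{i_1 + \cdots + i_s = n} i_1(i_1-1)\, e_{i_1} \prod_{j=2}^{s} (i_j - 1)\, e_{i_j}
\;=\; \sum_{I \vDash n} (i_1 - 1)\, w_I\, e_I,
\]
where $e_{i_1} \cdots e_{i_s} = e_I$ by the multiplicative convention recorded after \cref{def:rho}, and $i_1(i_1-1) \prod_{j \ge 2}(i_j-1) = (i_1-1) w_I$ by \cref{def:w}. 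Comparing coefficients of $z^n$ then yields the claimed formula for $X_{C_n}$.

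Third, a parallel route consistent with the composition method advertised in the introduction would begin from Stanley's $p$-expansion (\cref{prop:csf.p}) of $X_{C_n}$, lift each $p_J$ to its noncommutative counterpart $\Psi^J$ in $\mathrm{NSym}$, apply the $\Psi \to \Lambda$ change of basis, and finally take commutative images; this is the route carried out in \cref{sec:appendix}. The main obstacle on that route lies in simultaneously handling the cyclic symmetry of the edge subsets of $C_n$ and the alternating-sign combinatorics of the $\Psi \to \Lambda$ transition, whereas the generating-function proof sketched above bypasses this bookkeeping at the cost of invoking the already-proved identity of \cref{prop:gf:path+cycle}.
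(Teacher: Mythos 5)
Your proof is correct, but it takes a genuinely different route from the paper's. You verify the identity by expanding Stanley's generating function from \cref{prop:gf:path+cycle}: writing $F(z)=1-Q(z)$ with $Q(z)=\sum_{k\ge1}(k-1)e_kz^k$ and expanding $z^2E''(z)/F(z)$ as a geometric series cleanly separates the distinguished first part (weight $i_1(i_1-1)e_{i_1}$ from $z^2E''$) from the remaining parts (weight $(i_j-1)e_{i_j}$ each from a factor of $Q$), and the coefficient extraction is immediate; the algebra $Q=zE'-E+1=1-F$ checks out, and $Q$ has zero constant term so the geometric series is a legitimate formal power series manipulation. The paper instead proves \cref{prop:cycle} in \cref{sec:appendix} by the composition method: it derives the $\Psi$-expansion of a noncommutative analog from the $p$-expansion (\cref{lem:Psi:cycle}), applies the $\Psi\to\Lambda$ transition \cref{Psi2Lambda}, and evaluates the resulting alternating sum over refinements to get $[\Lambda^J]\widetilde X_{C_n}=(j_1-1)w_J$. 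Your argument is shorter and more elementary, at the cost of taking \cref{prop:gf:path+cycle} as input (which the paper cites without proof, so this is fair); it is essentially the same mechanism by which \citeauthor{SW16} read off \cref{X.path} for paths. The paper's longer route is deliberate: it is a worked illustration of Step~1 of the composition method (lifting a $p$-expansion to a $\Psi$-expansion in $\mathrm{NSym}$ and changing basis), a technique that still applies when no closed generating function is available, and it produces the analog $\widetilde X_{C_n}$ used elsewhere in the paper. Your third paragraph correctly identifies this trade-off.
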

We provide a proof for \cref{prop:cycle}
using the composition method in \cref{sec:appendix}.
\Citet[Theorem 3.1, Corollaries 3.2 and 3.3]{OS14} established the triple-deletion property for chromatic symmetric functions.

\begin{theorem}[\citeauthor{OS14}]\label{thm:3del}
Let $G$ be a graph with a stable set $T$ of order $3$.
Denote by $e_1$, $e_2$ and $e_3$ the edges linking the vertices in $T$.
For any set $S\subseteq \{1,2,3\}$, 
denote by $G_S$ the graph with vertex set~$V(G)$
and edge set $E(G)\cup\{e_j\colon j\in S\}$.
Then 
\[
X_{G_{12}}
=X_{G_1}+X_{G_{23}}-X_{G_3}
\quad\text{and}\quad
X_{G_{123}}
=X_{G_{13}}+X_{G_{23}}-X_{G_3}.
\]
\end{theorem}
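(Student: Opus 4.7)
The plan is to unfold the combinatorial definition $X_G=\sum_\kappa\prod_{v\in V(G)}\mathbf{x}_{\kappa(v)}$ and partition the proper colorings of the base graph $G$ according to their restriction to the triple $T$. The decisive observation is that $T$ is stable in $G$, so no edge of $G$ constrains the colors on $T$; consequently, a proper coloring $\kappa$ of $G$ lifts to a proper coloring of $G_S$ precisely when, for each $j\in S$, the two endpoints of $e_j$ receive different colors under $\kappa$.

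Concretely, I would label $T=\{u,v,w\}$ with $e_1=vw$, $e_2=uw$, $e_3=uv$, and sort the proper colorings of $G$ into five disjoint classes according to which equalities among $\kappa(u),\kappa(v),\kappa(w)$ hold: a class $A$ where all three values coincide; three classes $B_1,B_2,B_3$ in each of which exactly the two endpoints of one edge $e_i$ agree; and a class $C$ where all three values are distinct. Since any two of the three equalities force the third, these five classes are exhaustive and pairwise disjoint. Let $N_A$, $N_{B_1}$, $N_{B_2}$, $N_{B_3}$, $N_C$ denote the corresponding partial sums $\sum_\kappa\mathbf{x}^\kappa$ over the proper colorings of $G$ in each class.

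Reading off which classes satisfy each non-monochromaticity constraint gives
\[
X_{G_1}=N_{B_2}+N_{B_3}+N_C,\quad
X_{G_3}=N_{B_1}+N_{B_2}+N_C,\quad
X_{G_{13}}=N_{B_2}+N_C,
\]
\[
X_{G_{12}}=N_{B_3}+N_C,\quad
X_{G_{23}}=N_{B_1}+N_C,\quad
X_{G_{123}}=N_C,
\]
after which both claimed identities fall out by immediate cancellation. There is no genuine obstacle beyond identifying the correct partition of colorings; the real content of the theorem is the decoupling supplied by the stability of $T$, and the same five classes dispatch both identities simultaneously, so no separate argument for the second is needed. An alternative route via the $p$-expansion in \cref{prop:csf.p} is available by grouping edge subsets according to their intersection with $\{e_1,e_2,e_3\}$, but it merely repackages the same combinatorial bookkeeping.
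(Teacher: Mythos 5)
Your proof is correct. The paper itself gives no proof of this statement—it is quoted from Orellana and Scott \cite{OS14}—so there is nothing internal to compare against; your five-class partition of the proper colorings of $G$ (possible because $T$ is stable, and exhaustive because any two of the three color equalities on $T$ force the third) yields $X_{G_S}=N_C+\sum_{i\notin S}N_{B_i}$ for every nonempty $S$, from which both identities follow by cancellation, and this is essentially the argument in the cited source.
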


\subsection{Noncommutative symmetric functions}\label[sec]{sec:NSym}
For an introduction and basic knowledge on noncommutative symmetric functions,
see \citet{GKLLRT95}.
Let~$K$ be a field of characteristic zero.
The algebra of \emph{noncommutative symmetric functions}
is the free associative algebra 
$\mathrm{NSym}
=K\langle\Lambda_1,\Lambda_2,\dots\rangle$
generated by an infinite sequence $\{\Lambda_k\}_{k\ge 1}$ of indeterminates
over $K$, where $\Lambda_0=1$.
It is graded by the weight function $w(\Lambda_k)=k$.
The homogeneous component of weight $n$ is denoted $\mathrm{NSym}_n$.
Let $t$ be an indeterminate that commutes with all indeterminates $\Lambda_k$.
The \emph{elementary symmetric functions} are $\Lambda_n$ themselves,
whose generating function is denoted by
\[
\lambda(t)
=
\sum_{n\ge 0}\Lambda_n t^n.
\]
The \emph{complete homogeneous symmetric functions} $S_n$ are defined by the generating function
\[
\sigma(t)
=\sum_{n\ge 0} S_n t^n
=\frac{1}{\lambda(-t)}.
\]
The \emph{power sum symmetric functions $\Psi_n$ of the first kind} 
are defined by the generating function
\[
\psi(t)
=\sum_{n\ge 1}\Psi_n t^{n-1}
=\lambda(-t)\sigma'(t).
\]
For any composition $I=i_1 i_2\dotsm$, define
\[
\Lambda^I
=
\Lambda_{i_1}
\Lambda_{i_2}\dotsm,
\quad
S^I
=
S_{i_1}
S_{i_2}\dotsm,
\quad\text{and}\quad
\Psi^I
=
\Psi_{i_1}
\Psi_{i_2}\dotsm.
\]
The algebra $\mathrm{NSym}$ is freely generated by any one 
of these families.
Here the superscript notation are adopted to indicate that
the functions are multiplicative 
with respect to composition concatenations.
The \emph{sign} of $I$ is defined by
\begin{equation}\label[def]{def:sign.I}
\varepsilon^I
=
(-1)^{\abs{I}-\ell(I)}.
\end{equation}
It is direct to check that 
\begin{equation}\label{epsilon:IJ}
\varepsilon^I\varepsilon^J
=\varepsilon^{I\!J}.
\end{equation}
Another linear basis of $\mathrm{NSym}$ is 
the \emph{ribbon Schur functions} $R_I$, which can be defined by
\[
\varepsilon^I R_I
=
\sum_{J\preceq I}
\varepsilon^J S^J,
\]
see \cite[Formula (62)]{GKLLRT95}.
We list some transition rules for these bases,
see \cite[Propositions 4.15 and~4.23, and Note 4.21]{GKLLRT95}.

\begin{proposition}[\citeauthor{GKLLRT95}]\label{prop:transition:NSym}
For any composition $I$, we have 
\begin{align}
\label{Lambda2Ribbon}
\Lambda^I
&=
\sum_{J\succeq \overline{I}^\sim}
R_J,\\
\label{Psi2Lambda}
\Psi^I
&=
\sum_{J\succeq I}
\varepsilon^J 
f\!p(J,I)
\Lambda^J,
\quad\text{and}\\
\label{Psi2Ribbon}
\Psi^I
&=
\smashoperator[r]{
\sum_{J\in\mathcal H_I}}
\varepsilon^{I\!J_1\dotsm J_{\ell(I)}}
R_J,
\end{align}
where $J_k$ are the composition blocks
of the decomposition $\nabla_I(J)$.
\end{proposition}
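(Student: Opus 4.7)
The plan is to prove the three transition rules in sequence, exploiting the multiplicativity of $\Lambda^I$ and $\Psi^I$ in order to reduce each identity to a single-part case, and then using the product rule $R_I R_J = R_{I\!J} + R_{I\triangleright J}$ for ribbon Schur functions (derivable from their defining relation $\varepsilon^I R_I = \sum_{J\preceq I}\varepsilon^J S^J$ by a short inclusion--exclusion) to pass between the $R$- and $\Lambda$-bases. Throughout, $\Lambda_n=R_{1^n}$ will serve as the atomic building block, since a single-column ribbon has only the trivial expansion in $S$.

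For \cref{Lambda2Ribbon}, I first factor $\Lambda^I=R_{1^{i_1}}R_{1^{i_2}}\dotsm R_{1^{i_s}}$ and iteratively apply the product rule. Each of the $s-1$ junctions can be resolved as either a concatenation or a near-concatenation, producing $2^{s-1}$ ribbons. I would then verify that the set of resulting ribbons coincides precisely with $\{J\colon J\succeq\overline{I}^\sim\}$: the ``all near-concatenations'' choice glues the columns into the single ribbon $\overline{I}^\sim$, and every other choice finer-refines exactly that ribbon. The check is a direct bijection between the $2^{s-1}$ junction patterns and the $2^{s-1}$ refinements of $\overline{I}^\sim$.

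For \cref{Psi2Lambda}, I first handle the case $I=(n)$: from $\psi(t)=\lambda(-t)\sigma'(t)$ together with the recursion $S_m=\sum_{k=1}^m(-1)^{k-1}S_{m-k}\Lambda_k$ induced by $\sigma(t)\lambda(-t)=1$, a standard induction on $n$ yields
\[
\Psi_n
=\sum_{K\vDash n}\varepsilon^K k_1 \Lambda^K,
\]
where the factor $k_1$ arises as the degree contribution from differentiating $\sigma(t)$. For general $I=i_1\dotsm i_s$, I use $\Psi^I=\Psi_{i_1}\dotsm\Psi_{i_s}$ and distribute. A tuple of compositions $(K^{(1)},\dots,K^{(s)})$ with $K^{(k)}\vDash i_k$ concatenates to a refinement $J$ of $I$; the sign accumulates via \cref{epsilon:IJ} to $\varepsilon^J$; and the product of the first parts $k_1^{(1)}\dotsm k_1^{(s)}$ is, by definition, precisely $f\!p(J,I)$.

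For \cref{Psi2Ribbon}, the cleanest route is to substitute \cref{Lambda2Ribbon} into \cref{Psi2Lambda} and reorganize. The double sum becomes a sum over pairs $(J,J')$ with $J\succeq I$ and $J'\succeq\overline{J}^\sim$, and I would re-index by the outer ribbon $J'$ together with the choice of block decomposition relative to $I$. I expect the main obstacle here to lie in proving that only ribbons in $\mathcal H_I$ survive: the contribution of a non-hook block must be killed by a cancellation coming from the alternating signs $\varepsilon^J$ and the multiplicities $f\!p(J,I)$, while for hook blocks the weights collapse to~$1$ and the overall sign assembles into $\varepsilon^{I\!J_1\dotsm J_s}$ via \cref{epsilon:IJ}. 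I would try to exhibit this cancellation through a sign-reversing involution acting on the ``non-hook'' part of a block --- flipping the refinement inside the first non-hook block between its two natural coarsenings --- and otherwise fall back to the direct generating-function identity $\psi(t)=\lambda(-t)\sigma'(t)$ rewritten in the $R$-basis using $\Lambda_k=R_{1^k}$ and $S_k=R_k$, extended multiplicatively to $\Psi^I$. The hook-block condition would then emerge naturally from the fact that $\lambda(-t)\sigma'(t)$ produces, at each single index, exactly a sum of hook ribbons.
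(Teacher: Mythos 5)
The paper offers no proof of this proposition: it is imported from \citet{GKLLRT95} (Propositions 4.15 and 4.23 and Note 4.21 there), with only the added remark that \cref{Psi2Lambda} appears in that reference with $lp(J,I)$ and converts to the $f\!p(J,I)$ form via \cref{lp=fp}. So there is no internal argument to match yours against; judged on its own, your reconstruction is essentially the standard proof and is sound. For \cref{Lambda2Ribbon}, the factorization $\Lambda^I=R_{1^{i_1}}\dotsm R_{1^{i_s}}$ plus the product rule $R_IR_J=R_{I\!J}+R_{I\triangleright J}$ is correct, and the $2^{s-1}$ junction patterns do biject with the refinements of $\overline{I}^\sim$ (which has $n-s+1$ parts, hence $2^{s-1}$ refinements); just note that the atomic fact $\Lambda_n=R_{1^n}$ is not a ``trivial expansion in $S$'' --- it requires the inversion $\Lambda_n=\sum_{J\vDash n}(-1)^{n-\ell(J)}S^J$ coming from $\lambda(-t)\sigma(t)=1$. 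For \cref{Psi2Lambda}, your computation is correct and, pleasantly, lands directly on the first-parts form: extracting the coefficient of $\Lambda^K$ from $\psi(t)=\lambda(-t)\sigma'(t)$ gives $n\varepsilon^K-(n-k_1)\varepsilon^K=k_1\varepsilon^K$, so you bypass the $lp$-to-$f\!p$ conversion that the paper has to flag. The only soft spot is \cref{Psi2Ribbon}: your first route (substituting \cref{Lambda2Ribbon} into \cref{Psi2Lambda} and hunting for a sign-reversing involution on non-hook blocks) is left as a hope rather than an argument, but your stated fallback is in fact the clean proof and should be promoted to the main line --- from $\Psi_n=\sum_{k\ge0}(-1)^k(n-k)\Lambda_kS_{n-k}$ together with $\Lambda_k=R_{1^k}$ and $S_m=R_m$, the product rule telescopes to $\Psi_n=\sum_{s=0}^{n-1}(-1)^sR_{1^s(n-s)}$, and then expanding $\Psi_{i_1}\dotsm\Psi_{i_s}$ with the same product rule produces each ribbon of $\mathcal H_I$ exactly once per choice of hook blocks and junction symbols, which is precisely the bijection the paper records when it introduces $\mathcal H_I$; the sign assembles into $\varepsilon^{I\!J_1\dotsm J_{\ell(I)}}$ by \cref{epsilon:IJ} since $\varepsilon$ depends only on total size and length. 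With the fallback taken as primary, no cancellation argument for non-hook blocks is needed at all.
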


\Cref{Psi2Lambda} is true by virtue of \cref{lp=fp},
though it was expressed in terms of the
product $lp(J,I)$ in \cite{GKLLRT95}.
Recall from \cref{def:rho}
that $\rho$ 
maps a composition to its underlying partition.
We use the same notation $\rho$ to denote 
the projection map
defined by $\rho(\Lambda^I)=e_I$
and by extending it linearly.
By definition, for any composition~$I$,
\[
\rho(\Lambda^I)
=e_I,
\quad
\rho(S^I)
=h_I,
\quad
\rho(\Psi^I)
=p_I,
\quad\text{and}\quad
\rho(R_I)
=s_{\mathrm{sh}(I)},
\]
where $\mathrm{sh}(I)$ is
the skew partition of shape $I$.
For instance, 
\[
\rho(\Lambda^{12})
=
e_{21},
\quad
\rho(S^{12})
=
h_{21},
\quad
\rho(\Psi^{12})
=
p_{21},
\quad
\rho(R_{12})
=
s_{21}
\quad\text{and}\quad
\rho(R_{21})
=
s_{22/1}.
\]
When $\rho(F)=f$ for some $F\in\mathrm{NSym}$ and $f\in\mathrm{Sym}$,
we say that $f$ is the \emph{commutative image} of $F$,
and that $F$ is a \emph{noncommutative analog} of~$f$.
For instance, \cref{X.path,prop:cycle} imply that 
$X_{P_n}$ and $X_{C_n}$ have the noncommutative analogs
\begin{align}
\label{wX.path}
\widetilde X_{P_n}
&=
\sum_{I\vDash n}
w_I \Lambda^I,
\quad\text{and}\\
\label{wX.cycle}
\widetilde X_{C_n}
&=
\sum_{I\vDash n}
(i_1-1)
w_I
\Lambda^I,
\end{align}
respectively.
If a chromatic symmetric function $X_G$ has
a noncommutative analog $\widetilde X_G\in\mathrm{NSym}$,
then for any partition $\lambda\vdash \abs{V(G)}$,
\[
[e_\lambda]X_G
=\sum_{\rho(I)=\lambda}[\Lambda^I]\widetilde X_G.
\] 
The aforementioned ambiguity issue 
is solved naturally 
in the language of the algebra $\mathrm{NSym}$.
Indeed, 
since $\{\Lambda^I\}_{I\vDash n}$ is a basis of $\mathrm{NSym}_n$,
we talk about the well defined $\Lambda^I$-coefficients
instead of the undefined ``$e_I$-coefficients''.

By definition, any chromatic symmetric function 
has an infinite number of noncommutative analogs,
among which only a finite number with integer coefficients 
are $e$-positive.
In particular, if a symmetric function
$\sum_{\lambda\vdash n}c_\lambda e_\lambda$ 
is $e$-positive, then the analog 
$\sum_{\lambda\vdash n}c_\lambda \Lambda^\lambda$
is $\Lambda$-positive.
Therefore, a symmetric function is $e$-positive 
if and only if it has a $\Lambda$-positive 
noncommutative analog.
Therefore, in order to prove that a graph $G$ is $e$-positive,
it suffices to find a $\Lambda$-positive analog of $X_G$.
The algebra $\mathrm{NSym}$ plays the role of
providing theoretical support for the composition method.
As a consequence, 
we display only positive $e_I$-expansions in theorem statements.
We would not write in terms of noncommutative analogs 
except when arguing $\Lambda^I$-coefficients 
is convenient.

\subsection{Warming up for the composition method}\label[sec]{sec:CompFn}

This section consists of a property of the function~$w_I$ defined by \cref{def:w},
some other composition functions and their interrelations,
as well as some practices of using these functions.

From definition, 
it is straightforward to see that $w_I=w_J$
for any composition $J$ that is 
obtained by rearranging the non-first parts of $I$.
Another five-finger exercise is as follows.

\begin{lemma}\label{lem:wIJ}
Let $I$ and $J$ be nonempty compositions such that $j_1\ne 1$.
Then
\[
w_I w_J
=
\frac{j_1}{j_1-1}\cdotp 
w_{K}
\]
for any composition $K$ that is obtained by rearranging the parts of $I\!J$ 
such that $k_1=i_1$.
\end{lemma}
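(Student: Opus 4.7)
The plan is to unwind the definition of $w_I$ directly and compare the two sides. The key observation, noted immediately before the lemma, is that $w_I$ depends only on its first part $i_1$ and on the multiset of its remaining parts $\{i_2,\dots,i_{\ell(I)}\}$; rearranging the non-first parts does not change the value. Once this invariance is at hand, the identity reduces to pure bookkeeping.

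First I would write, by \cref{def:w},
\[
w_I\,w_J \;=\; i_1\!\prod_{k\ge 2}(i_k-1)\;\cdot\; j_1\!\prod_{k\ge 2}(j_k-1).
\]
Next, since $K$ is a rearrangement of $I\!J$ with $k_1=i_1$, the multiset of non-first parts of $K$ is exactly
\[
\{i_2,\dots,i_{\ell(I)}\}\cup\{j_1,j_2,\dots,j_{\ell(J)}\},
\]
so by the invariance remarked above,
\[
w_K \;=\; i_1\,(j_1-1)\!\prod_{k\ge 2}(i_k-1)\!\prod_{k\ge 2}(j_k-1).
\]
Taking the ratio $w_I w_J / w_K$ collapses all the factors $\prod(i_k-1)\prod(j_k-1)$ and the factor $i_1$, leaving $j_1/(j_1-1)$. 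The hypothesis $j_1\ne 1$ is used precisely here, to ensure the factor $(j_1-1)$ in the denominator is nonzero.

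There is no real obstacle in this proof; the only thing to be careful about is correctly identifying which part of $I\!J$ becomes the first part of $K$ (namely $i_1$) and therefore which part gets promoted or demoted in the product $\prod_{k\ge 2}(\,\cdot\,-1)$. In particular, the part $j_1$ of $J$, which originally contributes the factor $j_1$ to $w_J$, ends up as a non-first part of $K$ and contributes $j_1-1$ to $w_K$; this single replacement is the source of the factor $j_1/(j_1-1)$. The condition $j_1\ne 1$ guarantees no division by zero, and no hypothesis on $i_1$ is needed.
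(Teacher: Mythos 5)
Your proof is correct and matches the paper's approach: the paper simply states the lemma is ``Direct by \cref{def:w},'' and your write-up is exactly that direct computation, using the rearrangement-invariance of $w$ on non-first parts to identify $w_K$ and then observing that the single part $j_1$ is demoted from a leading factor $j_1$ to a factor $j_1-1$. One tiny stylistic caution: phrase the final step as multiplying $w_K$ by $j_1/(j_1-1)$ and comparing products term by term, rather than ``taking the ratio $w_Iw_J/w_K$,'' since $w_K$ may vanish (when some non-first part equals $1$) even though the identity still holds as $0=0$.
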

\begin{proof}
Direct by \cref{def:w}.
\end{proof}

For any number $a\le \abs{I}$, 
we define
the \emph{surplus partial sum} of $I$ with respect to $a$
to be the number
\begin{equation}\label[def]{def:sigma+}
\sigma_I^+(a)
=
\min\brk[c]1{\abs{i_1\dotsm i_k}\colon
0\le k\le \ell(I),\
\abs{i_1\dotsm i_k}\ge a}.
\end{equation}
Define the \emph{$a$-surplus} of $I$ to be the number
\begin{equation}\label[def]{def:Theta+}
\Theta_I^+(a)
=
\sigma_I^+(a)-a.
\end{equation}
Then $\Theta_I^+(a)\ge 0$.
The function $\Theta_I^+(\cdot)$ will appear in \cref{thm:tadpole}.
Here is a basic property.

\begin{lemma}\label{lem:Theta+.t}
Let $I\vDash n$ and $0\le a,t\le n$.
If $\Theta_I^+(a)\ge t$, then 
$
\Theta_I^+(a)
=
t+\Theta_I^+(a+t)$.
\end{lemma}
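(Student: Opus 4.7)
The plan is to rephrase the claimed equality directly in terms of the surplus partial sum $\sigma_I^+$, since $\Theta_I^+$ is only an affine shift of it. Subtracting $a$ from both sides of the desired identity, the claim $\Theta_I^+(a)=t+\Theta_I^+(a+t)$ becomes
\[
\sigma_I^+(a)
=\sigma_I^+(a+t),
\]
and the hypothesis $\Theta_I^+(a)\ge t$ becomes $\sigma_I^+(a)\ge a+t$. So the whole lemma reduces to: whenever the smallest prefix sum of $I$ that is $\ge a$ already exceeds $a+t$, it coincides with the smallest prefix sum that is $\ge a+t$.

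To prove this reduced statement I would argue by two inequalities. First, set $s=\sigma_I^+(a)$; by hypothesis $s\ge a+t$, so $s$ is itself a prefix sum of $I$ that is $\ge a+t$, and by the minimality in \cref{def:sigma+} we get $\sigma_I^+(a+t)\le s$. Conversely, let $s'=\sigma_I^+(a+t)$; then $s'\ge a+t\ge a$, so $s'$ is a prefix sum of $I$ that is $\ge a$, whence $\sigma_I^+(a)\le s'$. Combining the two gives $s=s'$, as required.

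There is essentially no obstacle here: the only thing to be careful about is to note that both $\sigma_I^+(a)$ and $\sigma_I^+(a+t)$ are well defined under the hypotheses (the prefix sum $|i_1\dotsm i_{\ell(I)}|=n$ is an admissible candidate in both minima, since $a,a+t\le n$ by the assumption $t\le \Theta_I^+(a)\le n-a$), so the two minima in \cref{def:sigma+} are nonvacuous. Once this is observed, the proof amounts to the two-line double inequality above.
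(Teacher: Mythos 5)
Your proof is correct and rests on exactly the same observation as the paper's one-line proof, namely that the hypothesis forces $\sigma_I^+(a+t)=\sigma_I^+(a)$; you simply spell out the double-inequality argument (and the nonvacuousness of the minima) that the paper leaves implicit.
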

\begin{proof}
This is transparent if one notices 
$\sigma_I^+(a+t)=\sigma_I^+(a)$.
\end{proof}

\Cref{lem:Theta+.t} will be used in the proof of \cref{thm:hat}. 
Similarly, for any number $a\ge 0$, 
we define
the \emph{deficiency partial sum} of $I$ with respect to $a$ 
to be the number
\begin{equation}\label[def]{def:sigma-}
\sigma_I^-(a)
=
\max\brk[c]1{
\abs{i_1\dotsm i_k}
\colon
0\le k\le \ell(I),\
\abs{i_1\dotsm i_k}\le a
},
\end{equation}
and define the \emph{$a$-deficiency} of $I$ to be the number
\begin{equation}\label[def]{def:Theta-}
\Theta_I^-(a)
=
a-\sigma_I^-(a).
\end{equation}
Then $\Theta_I^-(a)\ge 0$.
The function $\sigma^-_I$ (resp., $\Theta^-_I$)
can be expressed in terms of $\sigma^+_I$ (resp., $\Theta^+_I$).

\begin{lemma}\label{lem:sigma+-:Theta+-}
Let $I\vDash n$ and $0\le a\le n$. Then
\begin{equation}
\label{sigma-:sigma+}
\sigma_I^-(a)
=
n
-
\sigma_{\overline{I}}^+(n-a),
\end{equation}
or equivalently,
\begin{equation}
\label{Theta-:Theta+}
\Theta_I^-(a)
=
\Theta_{\overline{I}}^+(n-a).
\end{equation}
\end{lemma}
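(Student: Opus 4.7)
The plan is to prove the two equations by reducing everything to the elementary observation that the partial sums of $\overline{I}$ are precisely the complements (with respect to $n$) of the partial sums of $I$. Once that bijection of partial-sum sets is in hand, the min/max swap does all the work.

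More concretely, first I would set up notation: write $s_k = |i_1\dotsm i_k|$ for $0\le k\le \ell(I)$, so that $0=s_0<s_1<\dots<s_{\ell(I)}=n$ are the partial sums appearing in \cref{def:sigma-}, and let $t_k=|i_{\ell(I)}\dotsm i_{\ell(I)-k+1}|$ be the corresponding partial sums for $\overline{I}$. A direct check gives the key identity $t_k=n-s_{\ell(I)-k}$, which means that as $k$ ranges over $0,\dots,\ell(I)$, the multiset $\{t_k\}$ equals $\{n-s_j\}$.

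Next I would apply this to \cref{sigma-:sigma+}. By \cref{def:sigma-},
\[
\sigma_I^-(a)=\max\{s_k\colon s_k\le a\}.
\]
Using the bijection $s_k\leftrightarrow n-t_{\ell(I)-k}$, the condition $s_k\le a$ translates to $t_{\ell(I)-k}\ge n-a$, and $s_k=n-t_{\ell(I)-k}$, so
\[
\sigma_I^-(a)=\max\{n-t_j\colon t_j\ge n-a\}=n-\min\{t_j\colon t_j\ge n-a\}=n-\sigma_{\overline{I}}^+(n-a),
\]
which is \cref{sigma-:sigma+}. Finally, for \cref{Theta-:Theta+}, I would simply substitute \cref{def:Theta-,def:Theta+} and use \cref{sigma-:sigma+}:
\[
\Theta_I^-(a)=a-\sigma_I^-(a)=a-n+\sigma_{\overline{I}}^+(n-a)=\sigma_{\overline{I}}^+(n-a)-(n-a)=\Theta_{\overline{I}}^+(n-a).
\]

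I do not anticipate any real obstacle; the only thing to be careful about is that the admissible $k$ values in the min/max formulas on both sides really do correspond under the bijection, which is immediate because $k\mapsto \ell(I)-k$ is a bijection of $\{0,1,\dots,\ell(I)\}$ onto itself. The entire proof is little more than a change of variables, so the write-up should be short.
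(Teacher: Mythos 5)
Your proof is correct and rests on the same core observation as the paper's: the partial sums of $\overline{I}$ are the complements in $n$ of the partial sums of $I$, so the max defining $\sigma_I^-$ and the min defining $\sigma_{\overline{I}}^+$ correspond under $x\mapsto n-x$. The paper phrases this by pinning down the witness index $k$ with $\abs{i_1\dotsm i_k}\le a<\abs{i_1\dotsm i_{k+1}}$ and subtracting from $n$, whereas you reindex the whole set of partial sums at once, but this is a presentational difference only, and your derivation of \cref{Theta-:Theta+} from \cref{sigma-:sigma+} matches the paper's.
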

\begin{proof}
We shall show \cref{sigma-:sigma+} first.
If $a=n$, then 
$\sigma_I^-(a)=n$
and
$\sigma_{\overline{I}}^+(n-a)=0$,
satisfying \cref{sigma-:sigma+}.
Suppose that $0\le a<n$,
and 
\begin{equation}\label{pf:sigma-}
\sigma_I^-(a)=\abs{i_1\dotsm i_k}.
\end{equation}
Then $0\le k\le \ell(I)-1$. By \cref{def:sigma-},
\[
\abs{i_1\dotsm i_k}
\le a
<\abs{i_1\dotsm i_{k+1}}.
\]
Subtracting from $n$ by each sum in the above inequality, 
we obtain
\[
\abs{i_{k+1}\dotsm i_{-1}}
\ge n-a
>
\abs{i_{k+2}\dotsm i_{-1}},
\]
which reads,
$
\sigma_{\overline{I}}^+(n-a)
=
\abs{i_{k+1}\dotsm i_{-1}}$.
Adding it up with \cref{pf:sigma-}, 
we obtain the sum $n$ as desired.
This proves \cref{sigma-:sigma+}.
Using \cref{def:Theta+,def:Theta-}, 
one may infer \cref{Theta-:Theta+} from \cref{sigma-:sigma+}.
This completes the proof.
\end{proof}

\cref{lem:sigma+-:Theta+-} will be used in the proof of \cref{thm:epos:hat}.
Let us express the product $X_{P_l}X_{C_m}$ in terms of the functions $w_I$ and $\Theta_I^+(\cdot)$.

\begin{lemma}\label{lem:path*cycle}
For $l\ge 1$ and $m\ge 2$,
\begin{align}
\label{fml:PathCycle.j1}
X_{P_l}
X_{C_m}
&=
\sum_{
I\vDash l,\,
J\vDash m}
j_1
w_{I\!J}
e_{I\!J}\\
\label{fml:PathCycle.K}
&=
\sum_{
K\vDash l+m,\
\Theta_K^+(l)=0
}
\brk1{\Theta_K^+(l+1)+1}
w_K
e_K.
\end{align}
\end{lemma}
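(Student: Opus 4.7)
The plan is to derive the first equality directly from the $e_I$-expansions of $X_{P_l}$ (see \cref{X.path}) and $X_{C_m}$ (see \cref{prop:cycle}) together with \cref{lem:wIJ}, and then to obtain the second equality by realizing a bijection between ordered pairs $(I,J)$ and compositions $K$ of $l+m$ with a prescribed partial sum.

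For the first equality, I would multiply the $e$-expansions termwise. Since the multiplication of elementary symmetric functions is multiplicative under concatenation of indices, $e_I e_J = e_{I\!J}$, and so
\begin{equation*}
X_{P_l}X_{C_m}
=
\sum_{I\vDash l,\,J\vDash m}
(j_1-1)\,w_I w_J\,e_{I\!J}.
\end{equation*}
When $j_1\ne1$, \cref{lem:wIJ} (applied with $K=I\!J$, which satisfies $k_1=i_1$) gives $w_Iw_J=\frac{j_1}{j_1-1}w_{I\!J}$, so $(j_1-1)w_Iw_J=j_1 w_{I\!J}$. When $j_1=1$, the factor $(j_1-1)$ kills the term on the left, while on the right the factor $j_1=1$ in $w_{I\!J}$ corresponds to a non-first part of $I\!J$ (here we use $l\ge1$, hence $\ell(I)\ge1$), so $w_{I\!J}$ contains a factor $(1-1)=0$. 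Thus both sides agree termwise, establishing \cref{fml:PathCycle.j1}.

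For the second equality, I would set up the correspondence $(I,J)\mapsto K=I\!J$. A composition $K=k_1\dotsm k_s\vDash l+m$ arises from some $(I,J)$ in this way if and only if $l$ appears among the partial sums $|k_1\dotsm k_t|$, which by \cref{def:sigma+,def:Theta+} is exactly the condition $\Theta_K^+(l)=0$. When this holds, the splitting index $t$ is uniquely determined (since $l\ge 1$ and $|K|=l+m>l$), so the map is a bijection onto $\{K\vDash l+m\colon\Theta_K^+(l)=0\}$. Under this bijection, $j_1=k_{t+1}$; moreover
\begin{equation*}
\sigma_K^+(l+1)=|k_1\dotsm k_{t+1}|=l+k_{t+1},
\end{equation*}
so $\Theta_K^+(l+1)=k_{t+1}-1=j_1-1$, i.e.\ $\Theta_K^+(l+1)+1=j_1$. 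Replacing $j_1w_{I\!J}e_{I\!J}$ by $(\Theta_K^+(l+1)+1)w_Ke_K$ yields \cref{fml:PathCycle.K}.

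The calculations involved are routine once the dictionary is set up. The only mild subtlety is handling the $j_1=1$ case in the first equality, which is why \cref{lem:wIJ} is stated with the hypothesis $j_1\ne1$; I expect this to be the main thing to articulate carefully, but it is transparent after observing that $j_1$ sits in a non-first position of $I\!J$ whenever $I$ is nonempty.
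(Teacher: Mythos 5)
Your proposal is correct and follows essentially the same route as the paper: multiply the $e_I$-expansions of $X_{P_l}$ and $X_{C_m}$ termwise, apply \cref{lem:wIJ} to rewrite $(j_1-1)w_Iw_J$ as $j_1w_{I\!J}$, and then identify $j_1$ with $\Theta_K^+(l+1)+1$ under the correspondence $K=I\!J$. The paper's proof is terser; your explicit treatment of the $j_1=1$ case and the verification that $(I,J)\mapsto I\!J$ bijects onto $\{K\colon\Theta_K^+(l)=0\}$ are details the paper leaves implicit, and both are handled correctly.
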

\begin{proof}
By \cref{X.path,prop:cycle,lem:wIJ},
\[
X_{P_l}
X_{C_m}
=
\sum_{I\vDash l}
w_I 
e_I
\sum_{J\vDash m}
(j_1-1)
w_J 
e_J
=
\sum_{
I\vDash l,\
J\vDash m
}
j_1
w_{I\!J}
e_{I\!J}.
\]
This proves \cref{fml:PathCycle.j1}.
The other formula holds since $j_1=\Theta_K^+(l+1)+1$
when $K=I\!J$.
\end{proof}

Note that neither of \cref{fml:PathCycle.j1,fml:PathCycle.K} holds for $l=0$.
Now we compute a partial convolution of $X_{P_l}$ and $X_{C_m}$.

\begin{lemma}\label{lem:convolution:path*cycle}
For $0\le l\le n-2$,
\begin{equation}\label{fml:convolution:path*cycle}
\sum_{k=0}^l 
X_{P_k}
X_{C_{n-k}}
=\sum_{I\vDash n}
\brk1{\sigma_I^+(l+1)-1}
w_I
e_I.
\end{equation}
Dually, for $2\le m\le n-1$,
\begin{equation}\label{fml:convolution:cycle*path}
\sum_{i=2}^m
X_{C_i}
X_{P_{n-i}}
=
\sum_{I\vDash n}
\sigma_{\overline{I}}^-(m)
w_I
e_I.
\end{equation}
\end{lemma}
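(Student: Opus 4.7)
The plan is to prove the two identities in order, treating \cref{fml:convolution:path*cycle} by a direct contribution-tracking argument and then deducing \cref{fml:convolution:cycle*path} from it by commutativity and reindexing.

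For \cref{fml:convolution:path*cycle}, I would fix a composition $I=i_1\dotsm i_s\vDash n$ with partial sums $P_j=i_1+\dotsm+i_j$ and compute the coefficient of $w_I e_I$ on the left. The $k=0$ term is $X_{P_0}X_{C_n}=X_{C_n}$, contributing $i_1-1$ by \cref{prop:cycle}. For each $k\in\{1,\dotsc,l\}$, \cref{fml:PathCycle.K} in \cref{lem:path*cycle} says that $K=I$ is present exactly when $\Theta_I^+(k)=0$, i.e.\ when $k=P_j$ for some $j\ge 1$, and its contribution is then $\Theta_I^+(k+1)+1$. Since $P_{j+1}$ is the smallest partial sum exceeding $P_j$, we have $\sigma_I^+(k+1)=P_{j+1}$ and hence $\Theta_I^+(k+1)+1=i_{j+1}$. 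Letting $j^*$ be the largest index with $P_{j^*}\le l$, the contributions telescope to
\[
(i_1-1)+\sum_{j=1}^{j^*}i_{j+1}=P_{j^*+1}-1=\sigma_I^+(l+1)-1,
\]
where the last equality uses $P_{j^*}\le l<P_{j^*+1}$.

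For \cref{fml:convolution:cycle*path}, I would use commutativity together with the substitution $k=n-i$ to write
\[
\sum_{i=2}^m X_{C_i}X_{P_{n-i}}=\sum_{k=n-m}^{n-2}X_{P_k}X_{C_{n-k}}=\sum_{k=0}^{n-2}X_{P_k}X_{C_{n-k}}-\sum_{k=0}^{n-m-1}X_{P_k}X_{C_{n-k}},
\]
both upper indices lying in the permissible range $[0,n-2]$ since $2\le m\le n-1$. Applying \cref{fml:convolution:path*cycle} termwise gives the combined coefficient $\sigma_I^+(n-1)-\sigma_I^+(n-m)$. By \cref{Theta-:Theta+} applied with $I$ replaced by $\overline I$, we have $\sigma_{\overline I}^-(m)=n-\sigma_I^+(n-m)$, so the identity reduces to the pointwise claim $\sigma_I^+(n-1)\,w_I=n\,w_I$ for every $I\vDash n$.

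I expect this last reduction to be the subtle point. Indeed, $\sigma_I^+(n-1)$ equals $n$ precisely when no partial sum of $I$ hits $n-1$, which fails exactly when $s\ge 2$ and $i_s=1$; but in that remaining case the factor $i_s-1=0$ in \cref{def:w} forces $w_I=0$, so $\sigma_I^+(n-1)\,w_I=n\,w_I$ holds in both cases. This step also explains why the lemma is intrinsically a symmetric function identity rather than a $\Lambda^I$-wise identity in $\mathrm{NSym}$: the per-composition coefficients on the two sides of \cref{fml:convolution:cycle*path} disagree in general, and it is the weight $w_I$ that quashes the discrepancy.
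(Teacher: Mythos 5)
Your proof is correct. For \cref{fml:convolution:path*cycle} your argument is essentially the paper's: the paper reindexes the double sum over $\mathrm{NSym}$ as a sum over splittings $K=I\!J$ with $\abs{I}\le l$ and recognizes the accumulated first parts $j_1$ as the telescoping sum $\sigma_K^+(l+1)$, which is exactly your per-composition, per-$k$ bookkeeping with $\Theta_I^+(k)=0$ detecting the partial sums $P_j\le l$; the two computations are the same, and tracking contributions composition-by-composition is a legitimate substitute for the paper's explicit passage to $\Lambda^K$-coefficients, since it exhibits identical composition-indexed expansions on both sides. Where you genuinely diverge is \cref{fml:convolution:cycle*path}: the paper dismisses it with ``in the same fashion,'' i.e.\ a parallel direct computation, whereas you deduce it from the first identity via commutativity, the reflection identity $\sigma_{\overline I}^-(m)=n-\sigma_I^+(n-m)$, and the observation that the residual discrepancy $\sigma_I^+(n-1)-n$ is supported only on compositions with $i_{-1}=1$ and $\ell(I)\ge 2$, where $w_I=0$. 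This buys you a shorter second half at the cost of a $\mathrm{Sym}$-only argument (commutativity is unavailable in $\mathrm{NSym}$), which is harmless here since the lemma is stated for commutative images; your closing remark correctly flags this. Two cosmetic points: the identity you invoke is \cref{sigma-:sigma+} (applied to $\overline I$) rather than \cref{Theta-:Theta+}, though \cref{lem:sigma+-:Theta+-} states these are equivalent; and in the telescoping step one should note that $P_{j+1}$ always exists because $k\le l\le n-2<n$, which you implicitly use when writing $\sigma_I^+(k+1)=P_{j+1}$.
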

\begin{proof}
By \cref{fml:PathCycle.j1},
the convolution on the left hand of \cref{fml:convolution:path*cycle} has a noncommutative analog
\[
\sum_{k=1}^l
\widetilde X_{P_k}
\widetilde X_{C_{n-k}}
=
\sum_{k=1}^l 
\sum_{
I\vDash k,\
J\vDash n-k}
j_1 w_{I\!J}\Lambda^{I\!J}
=
\sum_{
K=I\!J\vDash n,\
1\le \abs{I}\le l
}
j_1
w_K
\Lambda^K.
\]
Combining it with \cref{prop:cycle}, we obtain
\begin{align*}
\sum_{k=0}^l
\widetilde X_{P_k}
\widetilde X_{C_{n-k}}
&=
\sum_{
K=I\!J\vDash n,\
0\le \abs{I}\le l
}
j_1
w_K
\Lambda^K
-
\sum_{K\vDash n}
w_K
\Lambda^K.
\end{align*}
The coefficient of $w_K\Lambda^K$
of the first sum on the right side is the partial sum $k_1+\dots+k_r$ 
such that 
\[
\abs{k_1\dotsm k_{r-1}}
\le l
<\abs{k_1\dotsm k_r},
\]
that is, the sum $\sigma_K^+(l+1)$.
This proves \cref{fml:convolution:path*cycle}.
In the same fashion, one may show \cref{fml:convolution:cycle*path}.
\end{proof}

We need the noncommutative setting in the proof above
since the coefficient of $w_K\Lambda^K$
is considered.
\Cref{lem:convolution:path*cycle}
will be used in the proof of \cref{thm:tadpole} for tadpoles.

\begin{corollary}\label{cor:convolution.full:path*cycle}
For $n\ge 2$,
the average 
of the full convolution of chromatic symmetric functions of paths and cycles
with total order $n$ is 
the chromatic symmetric function of the path of order $n$, i.e.,
\[
\frac{1}{n-1}\sum_{k=0}^{n-2}
X_{P_k}
X_{C_{n-k}}
=
X_{P_n}.
\]
\end{corollary}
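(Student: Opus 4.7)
The natural plan is to specialize \cref{lem:convolution:path*cycle} at $l=n-2$, which gives
\[
\sum_{k=0}^{n-2} X_{P_k}X_{C_{n-k}}
=\sum_{I\vDash n}\brk1{\sigma_I^+(n-1)-1}\,w_I\,e_I.
\]
In view of \cref{X.path}, the corollary will follow once we show that every surviving coefficient equals $(n-1)w_I$, that is, that $\sigma_I^+(n-1)=n$ for every composition $I\vDash n$ with $w_I\ne 0$.

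The second ingredient is a direct reading of \cref{def:w}: since $w_I=i_1\prod_{j\ge 2}(i_j-1)$, the nonvanishing of $w_I$ forces every part after the first to be at least $2$. Now I would split by length. If $\ell(I)=1$ then $I=(n)$ and the only nonzero partial sum is $n$ itself, so $\sigma_I^+(n-1)=n$. If $\ell(I)\ge 2$, then the last part satisfies $i_{-1}\ge 2$, so the partial sum through the first $\ell(I)-1$ parts equals $n-i_{-1}\le n-2$; hence the next (and smallest that can reach $n-1$) partial sum jumps directly to $n$, and again $\sigma_I^+(n-1)=n$. For any other $I$ the coefficient on the right vanishes because $w_I=0$.

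Combining the two observations yields
\[
\sum_{k=0}^{n-2} X_{P_k}X_{C_{n-k}}
=(n-1)\sum_{I\vDash n}w_I\,e_I
=(n-1)\,X_{P_n},
\]
and dividing by $n-1$ completes the proof. I do not foresee any real obstacle: once \cref{lem:convolution:path*cycle} is invoked, the remaining step is a one-paragraph observation on where partial sums can first exceed $n-1$. As a sanity check there is a completely independent route via \cref{prop:gf:path+cycle}: differentiating $F=E-zE'$ gives the clean identity $F'(z)=-zE''(z)$, and together with $zE'(z)=E(z)-F(z)$ this rewrites $z^2E(z)E''(z)/F(z)^2$ as $z\frac{d}{dz}(E/F)-E/F+1$, whose $z^n$-coefficient is exactly $(n-1)X_{P_n}$ for $n\ge 2$. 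That alternative confirms the identity, but the composition-method proof fits more naturally into the present paper.
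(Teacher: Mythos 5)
Your proof is correct and follows essentially the same route as the paper: specialize \cref{fml:convolution:path*cycle} at $l=n-2$ and invoke \cref{X.path}, with your observation that $\sigma_I^+(n-1)=n$ whenever $w_I\ne 0$ merely filling in a detail the paper leaves implicit. Your generating-function sanity check is also one of the alternatives the paper itself mentions after the corollary.
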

\begin{proof}
Taking $l=n-2$ in \cref{fml:convolution:path*cycle}, and using \cref{X.path},
one obtains
the desired formula.
\end{proof}

It can be shown alternatively by taking $m=n-1$ in \cref{fml:convolution:cycle*path}
and using \cref{prop:cycle}
and the identity $\Theta_{\overline{I}}^-(n-1)=n-i_1$,
or, by \cref{prop:gf:path+cycle}.

\section{Neat formulas for some chromatic symmetric functions}\label[sec]{sec:formulas}

In this section,
we use the composition method to produce
neat formulas for the chromatic symmetric functions
of several families of graphs, 
including tadpoles and their line graphs, barbells,
and generalized bulls.
We also establish the $e$-positivity of hats.

\subsection{The ribbon expansion for cycles}\label[sec]{sec:cycle}

In view of \cref{Lambda2Ribbon},
if a noncommutative symmetric function~$F$
is $\Lambda$-positive, then it is $R$-positive.
\citet{TW23X} discovered that the analog $\widetilde X_{P_n}$
has the rather simple ribbon expansion 
\[
\widetilde X_{P_n}
=
\sum_{
I\vDash n,\
i_{-1}=1,\
i_1,\dots,i_{-2}\le 2
}
2^{m_1(I)-1}
R_I.
\]
We present a $\Psi$-expansion for a noncommutative analog of cycles.

\begin{lemma}\label{lem:Psi:cycle}
For $n\ge 2$,
the chromatic symmetric function $X_{C_n}$ has a noncommutative analog
\[
\widetilde X_{C_n}
=
(-1)^n
\Psi^n
+
\sum_{I\vDash n}
\varepsilon^I
i_1
\Psi^I,
\]
where $\varepsilon^I$ is defined by \cref{def:sign.I}.
\end{lemma}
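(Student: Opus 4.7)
The plan is to apply the projection $\rho\colon\mathrm{NSym}\to\mathrm{Sym}$ to the proposed expression and match the result against Stanley's power-sum expansion of $X_{C_n}$ from \cref{prop:csf.p}. Since $\rho(\Psi^I)=p_{\rho(I)}$, the task reduces to verifying the commutative identity
\[
X_{C_n}
=(-1)^n p_n
+\sum_{I\vDash n}\varepsilon^I\, i_1\, p_{\rho(I)}.
\]

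First I would unpack the Stanley side. The subset $E'=E(C_n)$ is the only spanning subgraph that stays connected; it contributes $(-1)^n p_n$, exactly the leading term $(-1)^n\Psi^n$ in the lemma. For every proper subset $E'\subsetneq E$, removing $j\ge 1$ edges splits $C_n$ into $j$ paths whose vertex counts form a partition $\lambda\vdash n$ with $\ell(\lambda)=j$. A standard cyclic counting argument—realizing each decomposition as a linear composition of $\lambda$ planted at one of $n$ starting vertices, then dividing by the $j$ rotations that give the same decomposition—shows the number of such subsets equals $n(j-1)!/\prod_k m_k(\lambda)!$. Hence the contribution of these subsets to $[p_\lambda]X_{C_n}$ is $(-1)^{n-j}\cdot n(j-1)!/\prod_k m_k(\lambda)!$.

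Next I would match this with the right-hand sum by grouping compositions according to $\rho(I)=\lambda$. Since $\varepsilon^I=(-1)^{n-\ell(I)}$ depends only on $\ell(I)$, the $p_\lambda$-coefficient from the sum equals $(-1)^{n-j}\sum_{\rho(I)=\lambda}i_1$. The number of compositions $I$ with $\rho(I)=\lambda$ and prescribed first part $v$ is $m_v(\lambda)\cdot(j-1)!/\prod_k m_k(\lambda)!$, so summing against the weight $v$ gives
\[
\sum_{\rho(I)=\lambda}i_1
=\frac{(j-1)!}{\prod_k m_k(\lambda)!}\sum_v v\, m_v(\lambda)
=\frac{n(j-1)!}{\prod_k m_k(\lambda)!},
\]
which agrees exactly with the Stanley count. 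The main obstacle is keeping the cyclic enumeration honest, particularly for the partition $\lambda=(n)$: the general composition count still yields the right $n(-1)^{n-1}p_n$ contribution from the single composition $I=(n)$, and the leading $(-1)^n\Psi^n$ correction is precisely what absorbs the remaining full-cycle contribution $E'=E$ that the composition count does not see.
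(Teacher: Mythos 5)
Your proof is correct and follows essentially the same route as the paper: both start from Stanley's power-sum expansion in \cref{prop:csf.p} and enumerate the proper edge subsets of $C_n$ through the resulting decompositions into paths, with the full-cycle term $E'=E$ handled separately as $(-1)^n\Psi^n$. The only difference is bookkeeping: the paper reads a composition $I$ directly off each edge subset (starting from the path containing $v_1$), so that each $I$ arises from exactly $i_1$ subsets, whereas you aggregate to partitions and check that both sides contribute $(-1)^{n-j}\,n(j-1)!/\prod_k m_k(\lambda)!$ to $[p_\lambda]$.
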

\begin{proof}
Let $C_n=(V,E)$ be the cycle with vertices $v_1,\dots,v_n$ 
arranged counterclockwise.
Let $E'\subseteq E$. 
The contribution of the edge set 
$E'=E$ in \cref{prop:csf.p} is $(-1)^n p_n$.
When $E'\ne E$, the graph $(V,E')$ consists of paths.
Let $i_1$ be the order of the path containing $v_1$.
Then $1\le i_1\le n$.
Let $i_2,i_3,\dots$ be the orders of paths counterclockwise in the sequel.
Since the path containing $v_1$ has $i_1$ possibilities:
\[
v_1\dotsm v_{i_1},\quad
v_n v_1\dotsm v_{i_1-1},\quad
v_{n-1}v_n v_1\dotsm v_{i_1-2},\quad
\dots,\quad
v_{n-i_1+1}v_{n-i_1+2}\dotsm v_n v_1,
\]
we can deduce by \cref{prop:csf.p} that
\[
X_{C_n}
=
(-1)^n p_n
+
\sum_{I\vDash n}
i_1\cdotp
(-1)^{(i_1-1)+(i_2-1)+\dotsm}p_{\rho(I)}
=
(-1)^n p_n
+
\sum_{I\vDash n}
i_1
\varepsilon^I
p_{\rho(I)}.
\]
Since $\rho(\Psi^I)=p_I$,
$X_{C_n}$ has the desired analog.
\end{proof}

Now we can produce a ribbon Schur analog of~$X_{C_n}$.

\begin{theorem}\label{thm:ribbon.Schur:cycle}
The chromatic symmetric function of cycles
has a noncommutative analog
\[
\widetilde X_{C_n}
=
\sum_{
I\vDash n,\
i_1=i_{-1}=1,\
i_2,\,\dots,\,i_{-2}\le 2
}
2^{m_1(I)}
\brk3{1-\frac{1}{2^r}}
R_I
-
R_{1^n},
\]
where $i_{-1}$ and $i_{-2}$ are the last and second last part of $I$ respectively, $m_1(I)$
is defined by \cref{def:mk},
and $r$ is the maximum number of parts $1$ that start $I$.
\end{theorem}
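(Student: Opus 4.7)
The plan is to start from the $\Lambda$-expansion \cref{wX.cycle} and apply the change-of-basis rule \cref{Lambda2Ribbon}. After substituting and swapping the order of summation, I will write $\widetilde X_{C_n}=\sum_{J\vDash n}c_J R_J$ with
\[
c_J
=\sum_{\substack{I\vDash n,\\ J\succeq\overline{I}^\sim}}
(i_1-1)\,w_I
=\sum_{I\succeq K}
(i_1-1)\,w_I,
\]
where $K=\overline{J}^\sim$, using that the operation $I\mapsto\overline{I}^\sim$ is an involution on compositions of $n$ that reverses the reverse refinement order (a direct check from the ribbon definitions). Writing $K=k_1\dotsm k_p$, the refinements $I$ of $K$ are exactly concatenations $I=I^{(1)}\dotsm I^{(p)}$ with $I^{(q)}\vDash k_q$, and $w_I$ factors multiplicatively along this decomposition, yielding
\[
c_J=A(k_1)\prod_{q=2}^{p}B(k_q),\quad\text{where}\quad A(k)=\sum_{I\vDash k}(i_1-1)w_I,\quad B(k)=\sum_{I\vDash k}\prod_{j=1}^{\ell(I)}(i_j-1).
\]

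Next I will evaluate $A$ and $B$ by generating functions. From $\sum_{i\ge 1}(i-1)z^i=z^2/(1-z)^2$, a routine computation gives $B(z)=(1-z)^2/(1-2z)$ and $A(z)=2z^2/\brk1{(1-z)(1-2z)}$, so that $B(0)=1$, $B(1)=0$, $B(k)=2^{k-2}$ for $k\ge 2$, and $A(0)=A(1)=0$, $A(k)=2^k-2$ for $k\ge 2$. Therefore $c_J=0$ unless every part of $K=\overline{J}^\sim$ is at least~$2$. A short ribbon case analysis shows this is equivalent to $j_1=j_{-1}=1$ and $j_2,\dotsc,j_{-2}\le 2$, matching the support claimed in the theorem.

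For such $J$ containing at least one part equal to~$2$, I will parametrize $J=1^{r_0}\,2\,1^{r_1}\,2\dotsm 2\,1^{r_a}$ with $a\ge 1$, $r_0,r_a\ge 1$, and $r_1,\dotsc,r_{a-1}\ge 0$, and verify directly from the ribbon that $K=(r_0+1,\,r_1+2,\dotsc,r_{a-1}+2,\,r_a+1)$. Substituting, the coefficient telescopes:
\[
c_J
=(2^{r_0+1}-2)\cdot 2^{r_1+\dotsb+r_{a-1}}\cdot 2^{r_a-1}
=(2^{r}-1)\cdot 2^{m_1(J)-r}
=2^{m_1(J)}\brk3{1-\frac{1}{2^r}},
\]
using $r=r_0$ and $m_1(J)=r_0+\dotsb+r_a$. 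In the remaining case $J=1^n$, $c_{1^n}=A(n)=2^n-2=2^{n}(1-1/2^{n})-1$, so the unified formula overshoots by exactly~$1$; the $-R_{1^n}$ term in the statement precisely compensates for this discrepancy.

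The hard part will be the ribbon case analysis translating ``every part of $\overline{J}^\sim$ is at least~$2$'' into the explicit shape condition on $J$, together with the identification of $K$ from the $r_s$-parametrization; both hinge on a careful accounting of the two boundary columns of the ribbon of $J$, where ``$+1$'' rather than ``$+2$'' contributions appear. The generating function computations and the final telescoping are then entirely routine.
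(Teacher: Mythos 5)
Your proposal is correct, and it takes a genuinely different route from the paper's. The paper starts from the $\Psi$-analog of \cref{lem:Psi:cycle}, converts to ribbons via \cref{Psi2Ribbon}, kills most terms with a sign-reversing involution on hook decompositions, and then evaluates the survivors through a five-stage case analysis. You instead start from the $\Lambda$-analog \cref{wX.cycle} (Ellzey's formula, \cref{prop:cycle}) and apply \cref{Lambda2Ribbon}; your two key observations --- that $I\mapsto\overline{I}^\sim$ is descent-set complementation, hence an order-reversing involution, so the inner sum becomes a sum over refinements $I$ of $K=\overline{J}^\sim$, and that $(i_1-1)w_I$ factors multiplicatively over the blocks of such a refinement --- reduce each ribbon coefficient to $A(k_1)\prod_{q\ge2}B(k_q)$. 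I verified the generating functions ($B(z)=(1-z)^2/(1-2z)$, $A(z)=2z^2/((1-z)(1-2z))$, giving $B(1)=0$, $B(k)=2^{k-2}$, $A(k)=2^k-2$), the translation of ``all parts of $K$ at least $2$'' into ``all columns of the ribbon $J$ have length at least $2$,'' i.e.\ $j_1=j_{-1}=1$ and $j_2,\dots,j_{-2}\le 2$, the identification $K=(r_0+1,\,r_1+2,\dots,r_{a-1}+2,\,r_a+1)$, and the telescoped product; they all check out against the paper's example $\widetilde X_{C_5}=30R_{1^5}+4R_{1211}+6R_{1121}$ and against the $J=1^n$ boundary case. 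What your approach buys is brevity and transparency: no involution, no case analysis, and nonnegativity of the coefficients is manifest from $A,B\ge 0$. What it costs is that it takes \cref{prop:cycle} as input (which the paper itself only derives in \cref{sec:appendix}), whereas the paper's proof runs directly from the $p$-expansion and doubles as a demonstration of the sign-reversing-involution technique that the authors advertise as part of the composition method.
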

\begin{proof}
Recall that $\mathcal H_I$
is the set of ribbons $J$
such that every block in the decomposition $\nabla_I(J)$ is a hook. 
By \cref{Psi2Ribbon}, we can rewrite
the formula in \cref{lem:Psi:cycle} as
\begin{align}
\notag
\widetilde X_{C_n}
&=
(-1)^n
\sum_{J\in\mathcal H_n}
\varepsilon^{n\!J}
R_J
+
\sum_{I\vDash n}
i_1
\varepsilon^I
\sum_{J\in\mathcal H_I}
\varepsilon^{I\!J_1\dotsm J_{\ell(I)}}
R_J
\\
\label{pf:tX.Cn}
&=
\sum_{J\vDash n}
\sum_{J_1\bullet J_2\bullet\dotsm\in\mathcal H(J)}
\abs{J_1}
\varepsilon^{J_1 J_2\dotsm}
R_J
-
\sum_{J\in\mathcal H_n}
\varepsilon^J
R_J,
\end{align}
where $\mathcal H(J)$
is the set of decompositions $J_1\bullet J_2\bullet\dotsm$
such that every block in $J_k$ is a hook.
Here each bullet $\bullet$ is either the concatenation or the near concatenation.
It is direct to compute
\[
[R_{1^n}]\widetilde X_{C_n}
=
\sum_{I\vDash n}
i_1
\varepsilon^{1^{i_1}1^{i_2}\dotsm}
-
\varepsilon^{1^n}
=\sum_{I\vDash n}
i_1
-1
=
n
+
\sum_{j=1}^{n-1}
j\cdotp
2^{n-j-1}
-1
=
2^n-2.
\]
Below we consider $J\vDash n$ such that $J\ne 1^n$.

We introduce a sign-reversing involution
to simplify the inner sum in \cref{pf:tX.Cn}.
Let 
\[
d=J_1\bullet J_2\bullet\dotsm\in\mathcal H(J).
\]
For any box~$\square$ in the ribbon~$J$,
denote 
\begin{itemize}
\item
by~$J_\square$ the hook $J_k$ in $d$ that contains~$\square$, and
\item
by~$\square'$ the box lying to the immediate right of~$\square$, if it exists.
\end{itemize}
We call $\square'$ the \emph{right neighbor} of $\square$.
We say that a box $\square$ of $J$
is an \emph{active} box of $d$
if 
\begin{itemize}
\item
its right neighbor $\square'$ exists, 
\item
$J_\square\ne J_1$, and
\item
the union $J_\square\cup J_{\square'}$ of boxes 
is a hook.
\end{itemize}
Let $\mathcal H'(J)$ be the set 
of decompositions $d\in\mathcal H(J)$ that contain an active box.
We define a transformation $\varphi$ on~$\mathcal H'(J)$ as follows.
Let $d\in\mathcal H'(J)$.
Let $\square$ be the last active box of~$d$.
Define $\varphi(d)$
to be the decomposition obtained from $d$ by
\begin{itemize}
\item
dividing $J_\square$
into two hooks which contain $\square$ and $\square'$ respectively,
if $J_\square=J_{\square'}$;
\item
merging $J_\square$ and~$J_{\square'}$
into a single hook,
if $J_\square\ne J_{\square'}$.
\end{itemize}
From definition, 
we see that $\varphi$ is an involution.
In view of the sign of the inner sum in \cref{pf:tX.Cn},
we define the \emph{sign} of $d=J_1\bullet J_2\bullet\dotsm$ to be 
$\mathrm{sgn}(d)
=\varepsilon^{J_1J_2\dotsm}$.
Then $\varphi$ becomes sign-reversing as
\[
\mathrm{sgn}
\brk1{\varphi(d)}
=
-\mathrm{sgn}(d).
\]
As a result, the contribution of decompositions in $\mathcal H'(J)$
to the inner sum in \cref{pf:tX.Cn} is zero,
and~$\mathcal H(J)$ for the inner sum
can be replaced with the set
\[
\mathcal H''(J)
=\mathcal H(J)\backslash \mathcal H'(J)
\]
of decompositions of $J$ without active boxes.

First of all, we shall show that 
\[
[R_J]\widetilde X_{C_n}=0
\quad\text{if $J$ is a hook and $J\ne 1^n$.}
\] 
Let $J$ be a hook and $J\ne 1^n$.
Let $d\in\mathcal H''(J)$. Then $d$ has no active boxes.
In particular, the second last box $\square$
of~$J$ is not active.
It follows that $J_{\square}=J_1$ and 
\[
\mathcal H''(J)
=
\{J,\,J_1\triangleright 1\},
\]
where $J_1=J\backslash j_{-1}$.
Therefore, by \cref{pf:tX.Cn},
\[
[R_J]\widetilde X_{C_n}
=
n\varepsilon^J
+(n-1)\varepsilon^{J_1 1}
-\varepsilon^J
=0.
\]
Below we can suppose that $J$ is not a hook.
Then the subtrahend in \cref{pf:tX.Cn} vanishes,
and \cref{pf:tX.Cn} implies that
\begin{equation}\label{pf:contribution.J}
[R_J]\widetilde X_{C_n}
=
\sum_{J_1\bullet J_2\bullet\dotsm
\in\mathcal H''(J)}
\abs{J_1}
\varepsilon^{J_1 J_2\dotsm}.
\end{equation}

Second, we claim that 
$[R_J]\widetilde X_{C_n}=0$
unless $j_{-1}=1$.
In fact, if $j_{-1}\ge 2$,
then the second last box of~$J$ is active
for any decomposition $d\in\mathcal H(J)$.
Thus 
\[
\mathcal H''(J)=\emptyset
\quad\text{and}\quad
[R_J]\widetilde X_{C_n}=0.
\]
This proves the claim.
It follows that 
\[
J=1^{s_1}t_1 1^{s_2}t_2\dotsm 1^{s_l}t_l 1^{s_{l+1}},
\quad\text{
where $l\ge 1$, 
$s_1,\dots,s_l\ge 0$, 
$s_{l+1}\ge 1$,
and $t_1,\dots,t_l\ge2$.
}
\] 
Denote the last box on the horizontal part $t_j$
by~$\square_j$. 
We say that a box of~$J$ is a \emph{leader} of 
a decomposition $d\in\mathcal H''(J)$
if it is the first box of some hook of length at least $2$ in $d$.

Third, we claim that 
\[
[R_J]\widetilde X_{C_n}=0
\quad\text{unless $t_2=\dots=t_l=2$}.
\]
Let $j\ge 2$.
If $t_j\ge 3$, then the third last box 
in $t_j$ is active for any $d\in\mathcal H(J)$,
which implies $[R_J]\widetilde X_{C_n}=0$ as before.
This proves the claim.
Moreover, if $\square_j$ is not a leader 
for some $d\in\mathcal H''(J)$,
then the second last box in~$t_j$
is active in $d$, contradicting the choice of $d$.
Therefore, by \cref{pf:contribution.J},
\begin{equation}\label{pf:contribution.J2}
[R_J]\widetilde X_{C_n}
=
\sum_{
\substack{
d=J_1\bullet J_2\bullet\dotsm
\in\mathcal H''(J)\\
\square_j\text{ is a leader of~$d$, }
\forall j\ge 2
}}
\abs{J_1}
\varepsilon^{J_1 J_2\dotsm}.
\end{equation}

Fourth, we shall show that 
\[
[R_J]\widetilde X_{C_n}=0
\quad\text{unless $t_1=2$}.
\]
Suppose that $t_1\ge 3$ and $d=J_1\bullet J_2\bullet \dotsm\in\mathcal H''(J)$. 
Let $B_k$ be the $k$th last box in $t_1$. In particular, $B_1=\square_1$.
We observe that $B_3\in J_1$ since otherwise it would be active.
Moreover,
if $J_1$ ends with $B_3$,
then~$\square_1$ must be a leader of $d$,
since otherwise $B_2$ would be active.
To sum up, we are left to $3$ cases:
\begin{enumerate}
\item
$J_1$ ends with $B_3$,
$J_2=\{B_2\}$,
and $\square_1$ is a leader,
\item
$J_1$ ends with $B_2$,
\item
$J_1$ ends with $B_1$.
\end{enumerate}
Let $h=s_1+t_1$. 
The classification above allows us to transform \cref{pf:contribution.J2} to
\begin{equation}\label{pf:contribution.J3}
[R_J]\widetilde X_{C_n}
=
(h-2)\cdotp
\smashoperator{
\sum_{
\substack{
1^{s_1}(t_1-2)\triangleright 1\triangleright J_3\bullet\dotsm\in\mathcal H''(J)\\ 
\text{$\square_j$ is a leader, }
\forall j\ge 1
}}}
\varepsilon^{1^{s_1}(t_1-2)}
+
(h-1)\cdotp
\smashoperator{
\sum_{
\substack{
J=1^{s_1}(t_1-1)\triangleright J_2\bullet\dotsm\in\mathcal H''(J)\\ 
\text{$\square_j$ is a leader, }
\forall\,j\ge 2}
}}
\varepsilon^{1^{s_1}(t_1-1)}
+
h\cdotp 
\varepsilon^{1^{s_1}t_1}
\smashoperator{
\sum_{
\substack{
J=(1^{s_1}t_1)J_2\bullet \dotsm\in\mathcal H''(J)\\ 
\text{$\square_j$ is a leader, }
\forall\,j\ge 2
}}}
1.
\end{equation}
For $1\le j\le l$,
let $V_j$ be the column of boxes in $J$ that contains $\square_j$.
Then 
\[
\abs{V_j}
=
\begin{cases*}
s_{j+1}+2,
& if $1\le j\le l-1$;\\
s_{j+1}+1,
& if $j=l$.
\end{cases*}
\]
For $j\ge 2$, we observe that $V_j$
is the union of several blocks in $d$.
Conversely, since $\square_j$ is a leader,
$\abs{J_{\square_j}}\ge 2$,
and there are~$2^{\abs{V_j}-2}$ ways to decompose $V_j$
to form the blocks of some $d\in\mathcal H''(J)$.
Computing various cases for $V_1$ in the same vein,
we can deduce from \cref{pf:contribution.J3} that
\begin{align*}
[R_J]\widetilde X_{C_n}
&=
\varepsilon^{1^{s_1}t_1}\brk1{
(h-2)\cdotp 
2^{\abs{s_2\dotsm s_{l+1}}-1}
-
(h-1)\cdotp 
2^{\abs{s_2\dotsm s_{l+1}}}
+h\cdotp 
2^{\abs{s_2\dotsm s_{l+1}}-1}
}
=0.
\end{align*}
Note that each of the $3$ terms in the parenthesis
holds true even for when $l=1$.

Fifth, let us compute the $R_J$-coefficient for 
\[
J
=1^{s_1}2 \dotsm 1^{s_l}2 1^{s_{l+1}},
\quad\text{
where $l\ge 1$, 
$s_1,\dots,s_l\ge 0$, 
and $s_{l+1}\ge 1$.
}
\] 
If $B_1\not\in J_1$,
then $\square_1$ must be a leader,
since otherwise $B_2$ would be active.
Since every vertical hook has sign $1$,
we can deduce from \cref{pf:contribution.J2} that
\begin{align*}
[R_J]\widetilde X_{C_n}
&=
\sum_{
\substack{
1^s J_2\bullet\dotsm\in\mathcal H''(J)\\
1\le s\le s_1\\ 
\text{$\square_j$ is a leader, }
\forall j\ge 1
}}
s
+
\sum_{
\substack{
1^{s_1+1}\triangleright J_2\bullet\dotsm\in\mathcal H''(J)\\
\text{$\square_j$ is a leader, }
\forall j\ge 2
}}
(s_1+1)
-
\sum_{
\substack{
1^{s_1}2 J_2\bullet\dotsm\in\mathcal H''(J)\\ 
\text{$\square_j$ is a leader, }
\forall j\ge 2
}}
(s_1+2).
\end{align*}
Computing the number of decompositions in $\mathcal H''(J)$ 
for each of the $3$ sums above, we derived that
\begin{align*}
[R_J]\widetilde X_{C_n}
&=
\sum_{s=1}^{s_1}
s\cdotp 
2^{s_1-s}
\cdotp
2^{\abs{s_2\dotsm s_{l+1}}-1}
+
(s_1+1)\cdotp 
2^{\abs{s_2\dotsm s_{l+1}}}
-
(s_1+2)\cdotp
2^{\abs{s_2\dotsm s_{l+1}}-1},
\end{align*}
which is true even for $l=1$.
Note that $\abs{s_1\dotsm s_{l+1}}=m_1(J)$, and 
\[
\sum_{s=1}^{s_1}\frac{s}{2^s}
=
2-\frac{s_1+2}{2^{s_1}}
\]
holds as an identity.
Therefore, 
\begin{align*}
[R_J]\widetilde X_{C_n}
&=
2^{m_1(J)-1}
\brk3{
2-\frac{s_1+2}{2^{s_1}}
+\frac{s_1+1}{2^{s_1-1}}
-\frac{s_1+2}{2^{s_1}}}
=
2^{m_1(J)}
\brk3{1-\frac{1}{2^{s_1}}}.
\end{align*}

Finally, collecting the coefficients above, we obtain
\begin{equation}\label{fml:Rpos.cycle}
\widetilde X_{C_n}
=
(2^n-2)R_{1^n}
+
\sum_{
\substack{
J=1^{s_1}2\dotsm 1^{s_l}2 1^{s_{l+1}}\vDash n\\
l\ge 1,\
s_{l+1}\ge 1,\
s_2,\dots,s_l\ge 0
}}
2^{m_1(J)}
\brk3{1-\frac{1}{2^{s_1}}}
R_J,
\end{equation}
which can be recast as the desired formula.
\end{proof}

In view of \cref{fml:Rpos.cycle},
every $R_I$-coefficient is nonnegative.
For instance,
\[
\widetilde X_{C_5}
=
30 R_{1^5}
+4R_{1211}
+6R_{1121}.
\]

\subsection{Tadpoles and their line graphs}

For $m\ge 2$ and $l\ge 0$,
the \emph{tadpole} $T_{m,l}$
is the graph obtained by connecting a vertex on the cycle~$C_m$
and an end of the path $P_l$. 
By definition,
\[
\abs{V(T_{m,l})}
=\abs{E(T_{m,l})}
=m+l.
\]
See \cref{fig:tadpole} for the tadpole $T_{m,l}$
and its line graph $\mathcal L(T_{m,l})$.
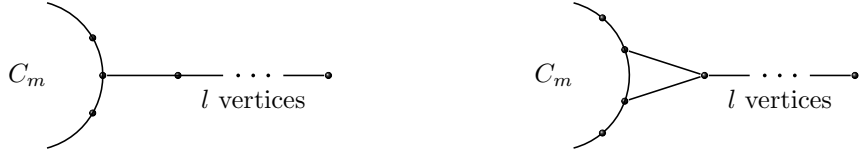
\begin{figure}[h]
\begin{tikzpicture}
\draw (0,0) node {$C_m$};
\node[ball] (c) at (0: \r) {};
\node[ball] at (30: \r) {};
\node[ball] at (-30: \r) {};

\node (p1) at ($ (c) + (\r, 0) $) [ball] {};
\node[ellipsis] at ($ (p1) + (\r-\eps, 0) $) {};
\node[ellipsis] (e2) at ($ (p1) + (\r, 0) $) {};
\node[ellipsis] at ($ (p1) + (\r+\eps, 0) $) {};
\node (p2) at ($ (p1) + (2*\r, 0) $) [ball] {};
\node[below=2pt] at (e2) {$l$ vertices};

\draw[edge] (-75: \r) arc (-75: 75: \r);
\draw[edge] (c) -- ($(p1) + (\r-2*\eps, 0)$);
\draw[edge] (p2) -- ($(p2) - (\r-2*\eps, 0)$);

\begin{scope}[xshift=7cm]
\draw (0,0) node {$C_m$};
\coordinate (c) at (0: \r);
\node[ball] (cu) at (20: \r) {};
\node[ball] (cl) at (-20: \r) {};
\draw[edge] (-75: \r) arc (-75: 75: \r);
\node[ball] at (50: \r) {};
\node[ball] at (-50: \r) {};

\node (p1) at ($ (c) + (\r, 0) $) [ball] {};
\node[ellipsis] at ($ (p1) + (\r-\eps, 0) $) {};
\node[ellipsis] (e2) at ($ (p1) + (\r, 0) $) {};
\node[ellipsis] at ($ (p1) + (\r+\eps, 0) $) {};
\node (p2) at ($ (p1) + (2*\r, 0) $) [ball] {};
\node[below=2pt] at (e2) {$l$ vertices};

\draw[edge] (cu) -- (p1) -- (cl);
\draw[edge] (p1) -- ($(p1) + (\r-2*\eps, 0)$);
\draw[edge] (p2) -- ($(p2) - (\r-2*\eps, 0)$);
\end{scope}
\end{tikzpicture}
\caption{The tadpole $T_{m,l}$ and its line graph $\mathcal L(T_{m,l})$.}\label{fig:tadpole}
\end{figure}
\citet[Theorem~3.1]{LLWY21} pointed out that 
tadpoles possess \citeauthor{GS01}'s
$(e)$-positivity, which implies the $e$-positivity.
They gave the chromatic symmetric function
\begin{equation}\label{X:tadpole:LLWY}
X_{T_{m,l}}
=
(m-1)
X_{P_{m+l}}
-
\sum_{i=2}^{m-1}
X_{C_i}
X_{P_{m+l-i}}
\end{equation}
in their formula (3.11).
By investigating the analog $Y_{\mathcal L(T_{m,l})}\in\mathrm{NCSym}$,
\citet[Theorem~3.2]{WW23-JAC} obtained  
the $(e)$-positivity of the line graphs~$\mathcal L(T_{m,l})$,
which implies the $e$-positivity of the graphs 
$\mathcal L(T_{m,l})$ and $T_{m,l}$.
They \cite[Formulas (3.2) and (3.3)]{WW23-JAC} also obtained 
the formulas
\begin{align}
\label{X:Ltadpole}
X_{\mathcal L(T_{m,l})}
&=
X_{P_l}
X_{C_m}
+
2\sum_{k=0}^{l-1}
X_{P_{k}}
X_{C_{n-k}}
-
2l
X_{P_{n}},
\quad\text{and}\\
\label{X:tadpole}
X_{T_{m,l}}
&=
\frac{1}{2}
\brk1{
X_{\mathcal L(T_{m,l})}
+
X_{P_l}
X_{C_m}
}
=
\sum_{k=0}^{l}
X_{P_{k}}
X_{C_{n-k}}
-lX_{P_{n}}.
\end{align}

\begin{theorem}[Tadpoles]\label{thm:tadpole}
For $0\le l\le n-2$, we have
\[
X_{T_{n-l,l}}
=
\sum_{I\vDash n}
\Theta_I^+(l+1)
w_I
e_I,
\]
where $w_I$ and $\Theta_I^+$ 
are defined by \cref{def:w,def:Theta+}, respectively.
\end{theorem}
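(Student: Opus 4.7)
The plan is to reduce the theorem to a direct substitution built on three already-established ingredients: the decomposition of the tadpole's chromatic symmetric function from \cref{X:tadpole}, the partial convolution formula \cref{fml:convolution:path*cycle}, and \citeauthor{SW16}'s path formula \cref{X.path}. The argument is the ``coefficient shaping'' side of the composition method in its cleanest form: no sign cancellations arise, so the $e$-positivity of the output is automatic.

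First, I would invoke \cref{X:tadpole}, which gives
\[
X_{T_{n-l,l}} = \sum_{k=0}^{l} X_{P_k} X_{C_{n-k}} - l\,X_{P_n}.
\]
This identity can be cited from \cite{WW23-JAC}, or alternatively re-derived by applying \citeauthor{OS14}'s triple-deletion property (\cref{thm:3del}) at a suitably chosen stable triple near the vertex where the cycle meets the path, which peels the tadpole into strictly smaller tadpoles, paths, and cycles, and then iterating. Either route is consistent with Step~1 of the composition method described in the introduction.

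Next, I would substitute \cref{fml:convolution:path*cycle} for the convolution and \cref{X.path} for $X_{P_n}$ to obtain
\[
X_{T_{n-l,l}} = \sum_{I \vDash n} \bigl(\sigma_I^+(l+1) - 1\bigr) w_I e_I - l \sum_{I \vDash n} w_I e_I = \sum_{I \vDash n} \bigl(\sigma_I^+(l+1) - l - 1\bigr) w_I e_I.
\]
Finally, the definition $\Theta_I^+(l+1) = \sigma_I^+(l+1) - (l+1)$ from \cref{def:Theta+} turns the parenthetical coefficient into $\Theta_I^+(l+1)$, yielding the asserted formula. Since the $a$-surplus is nonnegative by construction, the resulting $e_I$-expansion is manifestly positive.

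There is no substantive obstacle; the only minor care is to verify that the index ranges agree, namely that $m = n - l \ge 2$ matches the hypothesis $l \le n-2$ required by both \cref{X:tadpole} and \cref{fml:convolution:path*cycle}, and that the boundary case $l = 0$ reduces correctly to $X_{C_n}$ via \cref{prop:cycle}, as $\Theta_I^+(1) = i_1 - 1$. Both checks are immediate.
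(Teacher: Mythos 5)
Your proposal is correct and follows exactly the paper's own proof, which simply combines \cref{X:tadpole}, \cref{fml:convolution:path*cycle} and \cref{X.path} and absorbs the constant $-l-1$ into the definition of $\Theta_I^+(l+1)$. The extra boundary checks you mention are fine but not needed beyond what the cited results already guarantee.
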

\begin{proof}
It is direct 
by \cref{X:tadpole,fml:convolution:path*cycle,X.path}.
\end{proof}

One may deduce \cref{thm:tadpole} alternatively
by using \cref{fml:convolution:cycle*path,X:tadpole:LLWY,sigma-:sigma+}.
The tadpole~$T_{m,1}$ is called an \emph{$m$-pan}.
For example, the  $4$-pan has the chromatic symmetric function 
\[
X_{T_{4,1}}
=
\sum_{I\vDash 5}
\Theta_I^+(2)
w_I
e_I
=
15e_5
+9e_{41}
+3e_{32}
+e_{221}.
\]
We remark that \cref{thm:tadpole} 
reduces to \cref{X.path} when $l=n-2$,
and to \cref{prop:cycle} when $l=0$.

A \emph{lariat} is a tadpole of the form $T_{3,\,n-3}$.
\citet{Dv18} resolved $6$ conjectures of \citet{Wol98} on $X_{T_{3,\,n-3}}$
by analyzing \cref{X:tadpole:LLWY}.
We now bring out a neat formula for $X_{T_{3,\,n-3}}$,
which implies effortless resolutions of the conjectures.

\begin{corollary}[Lariats]\label{thm:lariat}
For $n\ge 3$, we have
$
X_{T_{3,\,n-3}}
=
2\sum_{
I\vDash n,\
i_{-1}\ge 3
}
w_I
e_I$.
\end{corollary}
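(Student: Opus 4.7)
The plan is to deduce the corollary as a direct specialization of \cref{thm:tadpole} with $l = n-3$, which yields
\[
X_{T_{3, n-3}} = \sum_{I \vDash n} \Theta_I^+(n-2) \, w_I \, e_I.
\]
The strategy is to compute $\Theta_I^+(n-2)$ by a very short case analysis, and then to exploit the zeros of $w_I$ to eliminate unwanted terms. No clever involution appears to be needed; the argument is essentially bookkeeping.

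Writing $s = \ell(I)$ and letting $k$ be the smallest index with $i_1 + \cdots + i_k \geq n - 2$, I would observe that
\[
\Theta_I^+(n-2) = (i_1 + \cdots + i_k) - (n-2) = 2 - (i_{k+1} + \cdots + i_s),
\]
and the minimality of $k$ forces the suffix sum $i_{k+1} + \cdots + i_s$ to lie in $\{0, 1, 2\}$. A brief inspection of the three possibilities yields: $\Theta_I^+(n-2) = 2$ iff the suffix is empty, i.e., $i_{-1} \geq 3$ (the singleton case $I = (n)$ falls in here automatically since $n \geq 3$); $\Theta_I^+(n-2) = 1$ iff the maximal suffix of sum $\leq 2$ is $(1)$, equivalently $i_{-1} = 1$ and $i_{-2} \geq 2$; and $\Theta_I^+(n-2) = 0$ in every remaining case.

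The step that finishes the proof is the observation that whenever $i_{-1} = 1$ and $\ell(I) \geq 2$, the factor $i_{-1} - 1 = 0$ appears in $w_I = i_1 \prod_{j \geq 2}(i_j - 1)$, so $w_I = 0$. Hence the entire stratum where $\Theta_I^+(n-2) = 1$ contributes nothing, and only compositions with $i_{-1} \geq 3$ survive, each with coefficient $2 w_I$. I do not expect any genuine obstacle; the whole proof is a compact case analysis riding on the vanishing of $w_I$. The only point requiring a brief note is the degenerate single-part composition $I = (n)$, which is correctly absorbed into the surviving family because $n \geq 3$ forces $i_{-1} = n \geq 3$.
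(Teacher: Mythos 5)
Your proposal is correct and follows exactly the paper's route: the paper's entire proof is ``Direct by taking $l=n-3$ in \cref{thm:tadpole},'' and your case analysis of $\Theta_I^+(n-2)$ together with the vanishing of $w_I$ when $i_{-1}=1$ simply spells out the details the paper leaves implicit. No gaps.
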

\begin{proof}
Direct by taking $l=n-3$ in \cref{thm:tadpole}.
\end{proof}

The line graphs of tadpoles also admit simple analogs.

\begin{theorem}[The line graphs of tadpoles]\label{thm:Ltadpole}
For $1\le l\le n-2$,
\[
X_{\mathcal L(T_{n-l,\,l})}
=
\sum_{
I\vDash n,\
\Theta_I^+(l)=0
}
\brk1{\Theta_I^+(l+1)-1}
w_I
e_I
+
2\sum_{
I\vDash n,\
\Theta_I^+(l)\ge 2
}
\Theta_I^+(l+1)
w_I
e_I,
\]
where $w_I$ and $\Theta_I^+$ 
are defined by 
\cref{def:w,def:Theta+}, respectively.
\end{theorem}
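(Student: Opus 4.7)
The plan is to substitute the three ingredients we already possess into the identity
\[
X_{\mathcal L(T_{n-l,l})}
=
X_{P_l}X_{C_{n-l}}
+2\sum_{k=0}^{l-1}X_{P_k}X_{C_{n-k}}
-2l\,X_{P_n}
\]
from \cref{X:Ltadpole} and to collect $e_I$-coefficients composition by composition. For the first summand we use \cref{fml:PathCycle.K} (valid since $l\ge 1$ and $n-l\ge 2$), for the convolution we apply \cref{fml:convolution:path*cycle} with the upper bound $l$ replaced by $l-1$ (valid since $1\le l\le n-1$), and for the path we use \cref{X.path}. Combining, every coefficient is of the form $w_I$ times a simple expression in $\sigma_I^+(l)$, $\Theta_I^+(l)$, and $\Theta_I^+(l+1)$, so the job becomes a direct coefficient manipulation.

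Carrying out the substitution and using $\sigma_I^+(l)-l=\Theta_I^+(l)$, I would obtain the intermediate identity
\[
X_{\mathcal L(T_{n-l,l})}
=\sum_{\substack{I\vDash n\\ \Theta_I^+(l)=0}}\brk1{\Theta_I^+(l+1)+1}\,w_I\,e_I
+2\sum_{I\vDash n}\brk1{\Theta_I^+(l)-1}\,w_I\,e_I,
\]
in which every composition $I\vDash n$ appears, weighted by a formula depending only on how $I$ interacts with the cut points at $l$ and $l+1$.

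The next step is to split the second sum according to the value of $\Theta_I^+(l)\in\{0,1,2,\dots\}$. When $\Theta_I^+(l)=0$, the two sums combine into a single $(\Theta_I^+(l+1)-1)w_I e_I$ contribution, matching the first sum in the target formula. When $\Theta_I^+(l)=1$, the factor $\Theta_I^+(l)-1$ vanishes and the composition contributes nothing, which is exactly why the case $\Theta_I^+(l)=1$ is absent from the theorem. When $\Theta_I^+(l)\ge 2$, I invoke \cref{lem:Theta+.t} with $a=l$ and $t=1$ to replace $\Theta_I^+(l)-1$ by $\Theta_I^+(l+1)$, producing the second sum in the target formula.

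The proof is therefore a coefficient-collecting calculation; the only slightly delicate point is the vanishing in the middle case $\Theta_I^+(l)=1$, which is what permits the clean trichotomy into the two sums indexed by $\Theta_I^+(l)=0$ and $\Theta_I^+(l)\ge 2$, and the translation from $\Theta_I^+(l)-1$ to $\Theta_I^+(l+1)$ in the large-surplus regime via \cref{lem:Theta+.t}. No new combinatorial input beyond \cref{lem:path*cycle,lem:convolution:path*cycle,lem:Theta+.t} and \cref{X.path,X:Ltadpole} should be required.
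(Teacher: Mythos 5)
Your proposal is correct and follows essentially the same route as the paper: substitute \cref{X.path,fml:PathCycle.K,fml:convolution:path*cycle} into \cref{X:Ltadpole}, merge the $-2l\,X_{P_n}$ term via $\sigma_I^+(l)-l=\Theta_I^+(l)$ to reach the same intermediate identity, and then split on $\Theta_I^+(l)\in\{0\}\cup\{1\}\cup\{\ge 2\}$ using \cref{lem:Theta+.t}. The only (cosmetic) difference is that the paper performs the coefficient collection on noncommutative analogs, arguing about well-defined $[w_I\Lambda^I]$-coefficients rather than the composition-indexed sums of $e_I$'s you manipulate directly; your bookkeeping is composition-by-composition, so it amounts to the same computation.
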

\begin{proof}
Let $n=m+l$ and $G=T_{m,l}$.
Taking a noncommutative analog for every term in \cref{X:Ltadpole},
using \cref{wX.path,fml:PathCycle.K,fml:convolution:path*cycle},
we obtain the analog
\begin{align}
\notag
\widetilde X_{\mathcal L(G)}
&=
\sum_{
I\vDash n,\
\Theta_I^+(l)=0
}
\brk1{\Theta_I^+(l+1)+1}
w_I
\Lambda^I
+
2\sum_{I\vDash n}
\brk1{\sigma_I^+(l)-1}
w_I
\Lambda^I
-
2l\sum_{I\vDash n}
w_I
\Lambda^I\\
\label{pf:Ltadpole}
&=
\sum_{
I\vDash n,\
\Theta_I^+(l)=0}
\brk1{\Theta_I^+(l+1)+1}
w_I
\Lambda^I
+2\sum_{I\vDash n}
\brk1{\Theta_I^+(l)-1}
w_I
\Lambda^I.
\end{align}
Let $I\vDash n$ such that $w_I\ne 0$.
We now compute the coefficient $[w_I\Lambda^I]\widetilde X_{\mathcal L(G)}$.
\begin{enumerate}
\item
If $\Theta_I^+(l)=0$, then $\Theta_I^+(l+1)\ge 1$ and
$
[w_I \Lambda^I]
\widetilde X_{\mathcal L(G)}
=
\Theta_I^+(l+1)-1
\ge 0$.
\item
If $\Theta_I^+(l)\ge 1$, then the first sum in \cref{pf:Ltadpole} 
does not contribute, and the second sum gives 
\[
[w_I\Lambda^I]
\widetilde X_{\mathcal L(G)}
=2\brk1{\Theta_I^+(l)-1}
=2\Theta_I^+(l+1)
\]
by \cref{lem:Theta+.t}.
\end{enumerate}
This completes the proof.
\end{proof}

For example, we have
\[
X_{\mathcal L(T_{4,1})}
=
\sum_{
\substack{
I\vDash 5,\
\Theta_I^+(1)=0
}}
\brk1{\Theta_I^+(2)-1}
w_I
e_I
+
2\sum_{
\substack{
I\vDash 5,\
\Theta_I^+(1)\ge 2
}}
\Theta_I^+(2)
w_I
e_I
=
30e_5
+6e_{41}
+6e_{32}.
\]
The noncommutative setting in the proof above is adopted
since $\Lambda^I$-coefficients are considered.

\subsection{Barbells}

For any composition $I=i_1\dotsm i_s\vDash n$,
the \emph{$K$-chain} $K(I)$ is the graph $(V,E)$ where
\begin{align*}
V
&=
\bigcup_{j=1}^s V_j 
\text{ with } 
V_j=\{v_{j1},\
v_{j2},\
\dots,\
v_{ji_j}\},
\quad\text{and}
\\
E
&=
\binom{V_1}{2}
\cup\binom{V_2\cup\{v_{1 i_1}\}}{2}
\cup\binom{V_3\cup\{v_{2 i_2}\}}{2}
\cup\dots
\cup\binom{V_s\cup\{v_{(s-1) i_{s-1}}\}}{2}.
\end{align*}
Here for any set $S$,
\[
\binom{S}{2}
=
\brk[c]1{
\{i,j\}\colon i,j\in S\text{ and }i\ne j}.
\]
See \cref{fig:K-chain}. 
\begin{figure}[h]
\begin{tikzpicture}
\node (c1) at (\r, \r) {$K_{i_1}$};
\draw ($(c1) + (-\r/2, \r)$) arc (90: 270: \r);
\draw ($(c1) + (\r/2, -\r)$) arc (-90: 90: \r);
\node[ellipsis] at ($ (c1) + (0, \r) $) {};
\node[ellipsis] at ($ (c1) + (-\eps, \r) $) {};
\node[ellipsis] at ($ (c1) + (+\eps, \r) $) {};
\node[ellipsis] at ($ (c1) + (0, -\r) $) {};
\node[ellipsis] at ($ (c1) + (-\eps, -\r) $) {};
\node[ellipsis] at ($ (c1) + (+\eps, -\r) $) {};
\node[ball] at ($(c1) + (3*\r/2, 0)$) {};
\coordinate (r1) at ($(c1) - (3*\r/2+\eps, \r+\eps)$);
\draw[densely dotted] (r1) rectangle ($(c1) + (3*\r/2+\eps, \r+\eps)$); 
\path (r1) -- ($(c1) + (3*\r/2+\eps, -\r-\eps)$) node[midway, below=2pt] {$i_1$ vertices};

\node (c2) at ($(c1) + (3,0)$) {$K_{i_2+1}$};
\draw ($(c2) + (-\r/2, \r)$) arc (90: 270: \r);
\draw ($(c2) + (\r/2, -\r)$) arc (-90: 90: \r);
\node[ellipsis] at ($ (c2) + (0, \r) $) {};
\node[ellipsis] at ($ (c2) + (-\eps, \r) $) {};
\node[ellipsis] at ($ (c2) + (+\eps, \r) $) {};
\node[ellipsis] at ($ (c2) + (0, -\r) $) {};
\node[ellipsis] at ($ (c2) + (-\eps, -\r) $) {};
\node[ellipsis] at ($ (c2) + (+\eps, -\r) $) {};
\node[ball] at ($(c2) + (3*\r/2, 0)$) {};
\coordinate (r2) at ($(c2) - (3*\r/2-\eps, \r+\eps)$);
\draw[densely dotted] (r2) rectangle ($(c2) + (3*\r/2+\eps, \r+\eps)$); 
\path (r2) -- ($(c2) + (3*\r/2+\eps, -\r-\eps)$) node[midway, below=2pt] {$i_2$ vertices};

\node (c3) at ($(c2) + (3,0)$) {$K_{i_3+1}$};
\draw ($(c3) + (-\r/2, \r)$) arc (90: 270: \r);
\draw ($(c3) + (\r/2, -\r)$) arc (-90: 90: \r);
\node[ellipsis] at ($ (c3) + (0, \r) $) {};
\node[ellipsis] at ($ (c3) + (-\eps, \r) $) {};
\node[ellipsis] at ($ (c3) + (+\eps, \r) $) {};
\node[ellipsis] at ($ (c3) + (0, -\r) $) {};
\node[ellipsis] at ($ (c3) + (-\eps, -\r) $) {};
\node[ellipsis] at ($ (c3) + (+\eps, -\r) $) {};
\node[ball] at ($(c3) + (3*\r/2, 0)$) {};
\coordinate (r3) at ($(c3) - (3*\r/2-\eps, \r+\eps)$);
\draw[densely dotted] (r3) rectangle ($(c3) + (3*\r/2+\eps, \r+\eps)$); 
\path (r3) -- ($(c3) + (3*\r/2+\eps, -\r-\eps)$) node[midway, below=2pt] {$i_3$ vertices};

\coordinate (c4) at ($(c3) + (3,0)$);
\draw ($(c4) + (-\r/2, \r)$) arc (90: 270: \r);
\draw ($(c4) + (\r/2, -\r)$) arc (-90: 90: \r);
\node[ellipsis] at (c4) {};
\node[ellipsis] at ($ (c4) + (-\eps, 0) $) {};
\node[ellipsis] at ($ (c4) + (+\eps, 0) $) {};
\node[ball] at ($(c4) + (3*\r/2, 0)$) {};

\node (c5) at ($(c4) + (3,0)$) {$K_{i_{-1}+1}$};
\draw ($(c5) + (-\r/2, \r)$) arc (90: 270: \r);
\draw ($(c5) + (\r/2, -\r)$) arc (-90: 90: \r);
\node[ellipsis] at ($ (c5) + (0, \r) $) {};
\node[ellipsis] at ($ (c5) + (-\eps, \r) $) {};
\node[ellipsis] at ($ (c5) + (+\eps, \r) $) {};
\node[ellipsis] at ($ (c5) + (0, -\r) $) {};
\node[ellipsis] at ($ (c5) + (-\eps, -\r) $) {};
\node[ellipsis] at ($ (c5) + (+\eps, -\r) $) {};
\coordinate (r5) at ($(c5) - (3*\r/2-\eps, \r+\eps)$);
\draw[densely dotted] (r5) rectangle ($(c5) + (3*\r/2+\eps, \r+\eps)$); 
\path (r5) -- ($(c5) + (3*\r/2+\eps, -\r-\eps)$) node[midway, below=2pt] {$i_{-1}$ vertices};
\end{tikzpicture}
\caption{The $K$-chain $K(I)$ for $I=i_1i_2\dotsm$.}
\label{fig:K-chain}
\end{figure}
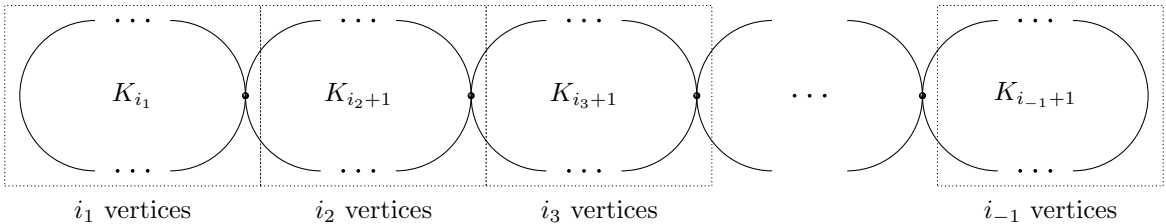
In other words, the $K$-chain $K(I)$
can be obtained from a sequence $G_1=K_{i_1}$,
$G_2=K_{i_2+1}$, $G_3=K_{i_3+1}$, $\dots$, $G_l=K_{i_s+1}$
of cliques such that $G_j$ and $G_{j+1}$ share one vertex,
and that the $s-1$ shared vertices are distinct.
The number of vertices and edges of $K(I)$ are respectively
\[
\abs{V}=n\quad\text{and}\quad
\abs{E}=\binom{i_1}{2}+\sum_{j\ge 2}\binom{i_j+1}{2}.
\]
For instance, 
$K(1^n)=P_n$
and
$K(n)=K_n$.
The family of $K$-chains contains many special graphs.
\begin{enumerate}
\item
A \emph{lollipop} 
is a $K$-chain of the form $K(a1^{n-a})$.
A lariat is a lollipop of the form $K(31^{n-3})$.
\item
A \emph{barbell} 
is a $K$-chain of the form $K(a1^bc)$.
A \emph{dumbbell}
is a barbell of the form $K(a1b)$.
\item
A \emph{generalized bull}
is a $K$-chain of the form $K(1^a 21^{n-a-2})$.
\end{enumerate}

\citet[Theorem 2]{Tom24} gave a formula for the chromatic symmetric function
of melting lollipops, with lollipops as a specialization. 

\begin{theorem}[Lollipops, \citeauthor{Tom24}]\label{thm:lollipop}
Let $n\ge a\ge 1$. Then
$
X_{K(a1^{n-a})}
=
(a-1)!
\sum_{\substack{I\vDash n,\ i_{-1}\ge a}}
w_I
e_I$.
\end{theorem}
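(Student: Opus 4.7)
The plan is to prove the formula by induction on $a$, combining a cut-vertex decomposition of $K(a1^{n-a})$ at $v_{1a}$ with the composition method developed in the paper.

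The base case $a=1$ gives $K(1\cdot 1^{n-1})=P_n$, and the claim reduces to \cref{X.path}. Indeed, the factor $\prod_{j\ge 2}(i_j-1)$ in \cref{def:w} forces $w_I=0$ whenever some non-first part of $I$ equals $1$, so among the nonzero contributions one automatically has $i_{-1}\ge 1$, while $(a-1)!=1$.

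For the inductive step, I exploit that $v_{1a}$ is a cut vertex separating the clique $K_a$ from the attached path $P_{n-a+1}$. Conditioning on the color $c$ of $v_{1a}$, the other clique vertices $v_{11},\dots,v_{1(a-1)}$ receive $a-1$ distinct colors all different from~$c$ and contribute the symmetric factor $(a-1)!\,e_{a-1}(x\setminus x_c)$; meanwhile the trailing path $v_{21}v_{31}\dotsm$ must be properly colored with its first vertex avoiding $c$. Using the classical recursions
\[
e_{a-1}(x\setminus x_c)=\sum_{j\ge 0}(-x_c)^j e_{a-1-j}(x),\qquad
X_{P_{m},\,u_1\ne c}=\sum_{k\ge 0}(-x_c)^k X_{P_{m-k}},
\]
then collecting the powers $x_c^{j+k+1}$ across colors into $p_{j+k+1}$, I obtain
\[
X_{K(a1^{n-a})}=(a-1)!\sum_{j=0}^{a-1}\sum_{k=0}^{n-a}(-1)^{j+k}\,p_{j+k+1}\,e_{a-1-j}\,X_{P_{n-a-k}}.
\]
A direct power-sum-to-elementary conversion confirms this for small cases, for instance reproducing $X_{K(31)}=8e_4+4e_{31}$ for $a=3$, $b=1$.

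Next I would lift this identity to $\mathrm{NSym}$ by replacing $p_{j+k+1}\mapsto\Psi_{j+k+1}$, $e_{a-1-j}\mapsto\Lambda_{a-1-j}$, and $X_{P_{n-a-k}}\mapsto\widetilde X_{P_{n-a-k}}=\sum_{J}w_J\Lambda^J$ from \cref{wX.path}, and then expand $\Psi_{j+k+1}$ in the $\Lambda$-basis via \cref{Psi2Lambda}. Each summand then becomes a signed sum of $\Lambda^I$'s indexed by concatenations $I=M\cdot(a-1-j)\cdot J$ with $M\vDash j+k+1$ and $J\vDash n-a-k$, with the convention that the single-part insertion is omitted when $a-1-j=0$. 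The main obstacle is to design a sign-reversing involution, analogous in spirit to the one appearing in the proof of \cref{thm:ribbon.Schur:cycle}, that pairs such triples across the $(a-1-j)$-junction so as to cancel every coefficient of $\Lambda^I$ with $i_{-1}<a$, while directly evaluating the surviving coefficients to $(a-1)!\,w_I$ whenever $i_{-1}\ge a$. Taking the commutative image $\rho$ would then deliver the claimed $e_I$-expansion.
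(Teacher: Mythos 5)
Your intermediate identity is correct: conditioning on the color of the cut vertex $v_{1a}$ and multiplying the two alternating expansions does give
\[
X_{K(a1^{n-a})}=(a-1)!\sum_{j=0}^{a-1}\sum_{k=0}^{n-a}(-1)^{j+k}\,p_{j+k+1}\,e_{a-1-j}\,X_{P_{n-a-k}},
\]
and it specializes correctly (for $a=n$ it is Newton's identity $ne_n=\sum_{j=0}^{n-1}(-1)^jp_{j+1}e_{n-1-j}$). But the proof stops exactly where the work begins. The entire content of \cref{thm:lollipop} is the assertion that this signed triple convolution collapses to $(a-1)!\sum_{i_{-1}\ge a}w_Ie_I$, and you defer that to a sign-reversing involution you have not constructed, explicitly calling it ``the main obstacle.'' Until that involution is exhibited and shown to kill every $\Lambda^I$ with $i_{-1}<a$ while leaving coefficient $w_I$ otherwise, nothing is proved. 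There is also an unaddressed choice in the lift to $\mathrm{NSym}$: $\rho$ is multiplicative, but the order of the noncommutative factors is yours to pick, and since the surviving condition $i_{-1}\ge a$ constrains the \emph{last} part, the factor carrying the large part must sit at the right end of the product; a careless ordering makes the intended cancellation unrecognizable. (The ``induction on $a$'' framing is also vestigial --- your argument is a direct decomposition, not an induction.)

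For comparison, the paper does not prove this statement from scratch either: it quotes \citet{Tom24} and records a much shorter alternative via \citet{Dv18}'s recurrence $X_{K(a1^{n-a})}=(a-1)!\bigl(X_{P_n}-\sum_{i=1}^{a-2}\tfrac{a-i-1}{(a-i)!}X_{K_{a-i}}X_{P_{n-a+i}}\bigr)$. Combined with $X_{K_m}=m!\,e_m$ and \cref{X.path}, that route needs no involution at all: appending a final part $a-i\ge 2$ to a nonempty composition $J$ multiplies $w_J$ by exactly $a-i-1$ by \cref{def:w}, so the subtracted terms are precisely the terms of $\sum_{I\vDash n}w_Ie_I$ with $2\le i_{-1}\le a-1$, and the terms with $i_{-1}=1$ already vanish. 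If you want a self-contained argument, either finish your involution or switch to a route of this second kind, where the cancellation is forced by the multiplicativity of $w$ rather than by a bijection you still owe.
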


\cref{thm:lollipop} covers 
\cref{X.path,X.complete,thm:lariat}.
It can be derived alternatively by 
using \cref{X.complete,X.path} and
\[
X_G
=
(a-1)!
\brk3{
X_{P_n}
-
\sum_{i=1}^{a-2}
\frac{a-i-1}{(a-i)!}
X_{K_{a-i}}X_{P_{n-a+i}}
},
\]
which is due to \citet[Proposition 9]{Dv18}.

Using \citeauthor{Dv18}'s method of
discovering a recurrence relation for
the chromatic symmetric functions of lollipops,
we are able to handle barbells.

\begin{theorem}[Barbells]\label{thm:barbell}
Let $n=a+b+c$, where $a\ge 1$ and $b,c\ge 0$.
Then
\[
X_{K(a1^bc)}
=(a-1)!\
c!
\brk4{
\sum_{
\substack{
I\vDash n,\
i_{-1}\ge a\\
i_1\ge c+1
}}
w_Ie_I
+
\sum_{
\substack{
I\vDash n,\
i_{-1}\ge a\\
i_1\le c< i_2
}}
(i_2-i_1)
\prod_{j\ge 3}(i_j-1)
e_I
},
\]
where $w_I$ is defined by \cref{def:w}.
\end{theorem}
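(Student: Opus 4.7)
The plan is to adapt Dahlberg's recurrence strategy used to establish \cref{thm:lollipop} (see the identity displayed after that theorem) to the barbell setting. Viewing $K(a1^bc)$ as the lollipop $K(a1^{b+c})$ with extra edges that turn the last $c+1$ vertices into the clique $K_{c+1}$, I expect a ``peeling'' recurrence of the shape
\[
X_{K(a1^bc)}=c!\,X_{K(a1^{b+c})}-c!\sum_{i=1}^{c-1}\beta_i\,X_{K_{c-i+1}}\,X_{K(a1^{b+i-1})}
\]
for explicit rational coefficients $\beta_i$. Such a recurrence should follow by applying the triple-deletion theorem (\cref{thm:3del}) iteratively to carefully chosen triples within $K_{c+1}$, or equivalently by an inclusion–exclusion over colorings restricted at the gluing vertex. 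Throughout the argument I would pass to the noncommutative setting, working with the analog $\widetilde X_{K(a1^bc)}\in\mathrm{NSym}$ so that $\Lambda^I$-coefficients are unambiguous.

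Next, using \cref{thm:lollipop} in the noncommutative form
$\widetilde X_{K(a1^m)}=(a-1)!\sum_{I\vDash a+m,\ i_{-1}\ge a}w_I\Lambda^I$
together with $\widetilde X_{K_k}=k!\,\Lambda^k$ (from \cref{X.complete}), each product $\widetilde X_{K_{c-i+1}}\,\widetilde X_{K(a1^{b+i-1})}$ becomes a sum of $\Lambda^J$ terms indexed by compositions $J$ with $j_1=c-i+1$ and $j_{-1}\ge a$. Collecting all contributions to a fixed $\Lambda^I$ and simplifying via \cref{lem:wIJ} should produce a closed-form $\Lambda^I$-coefficient. Compositions with $i_{-1}\ge a$ then split naturally into three classes according to $i_1$ and $i_2$: (A) $i_1\ge c+1$; (B) $i_1\le c<i_2$; and (C) $i_1,\,i_2\le c$. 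The first two classes correspond to the two surviving sums in the stated formula.

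The main obstacle will be verifying the recurrence and then carrying out the cancellations in class (C), where several product terms contribute and must sum to zero. I anticipate the needed identity reduces to a telescoping summation on the rational coefficients $\beta_i$ combined with the product rule of \cref{lem:wIJ}; if not, a sign-reversing involution pairing compositions inside (C) should accomplish the cancellation. Isolating the combinatorially correct coefficients $\beta_i$—so that the end-clique factor $c!$ and the lollipop factor $(a-1)!$ both emerge cleanly—is likely to require careful bookkeeping, and is the step most at risk of hiding subtleties. Once the cancellation in (C) is in place, recovering the coefficients $(a-1)!\,c!\,w_I$ on class (A) and $(a-1)!\,c!\,(i_2-i_1)\prod_{j\ge 3}(i_j-1)$ on class (B) follows by direct computation using \cref{def:w}, and the desired $e_I$-expansion is obtained by applying $\rho$ to the resulting $\Lambda$-positive analog.
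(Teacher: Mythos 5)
Your plan is essentially the paper's proof. The peeling recurrence you posit is exactly what iterated triple-deletion on the triples $(s_b,\,t_{k+1},\,t_{k+2})$ yields, namely
$X_{K(a1^bc)}=c!\,X_{K(a1^{b+c})}-\sum_{i=0}^{c-2}\frac{c!(c-i-1)}{(c-i)!}\,X_{K_{c-i}}X_{K(a1^{b+i})}$
(so your $\beta_{i+1}=(c-i-1)/(c-i)!$), and the class-(C) cancellation is accomplished by precisely the sign-reversing involution you anticipate: swapping the first two parts negates $f_I=w_I-(i_1-1)w_{I\backslash i_1}=(i_2-i_1)\prod_{j\ge 3}(i_j-1)$, so those terms cancel in pairs. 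The one bookkeeping point your three-class split glosses over is that compositions with $i_1=1$ must be tracked separately: the involution argument runs over compositions with all parts at least $2$, yet the final class-(B) sum admits $i_1=1$; the paper isolates the $i_1=1$ contributions as a separate summand $Y_1$ and shows that it exactly offsets the correction term incurred when the class-(B) range is extended from $2\le i_1\le c$ to $1\le i_1\le c$, so you should expect an analogous reconciliation step rather than a clean three-way partition.
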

\begin{proof}
Fix $a$ and $n=a+b+c$. See \cref{fig:barbell}.
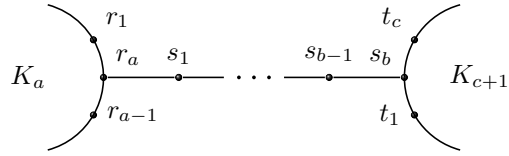
\begin{figure}[htbp]
\begin{tikzpicture}
\node (k1) at (0,0) {$K_a$};
\node (c1) at (0: \r) [ball, label=above right: $r_a$] {};
\node at (30: \r) [ball, label=above right: $r_1$] {};
\node at (-30: \r) [ball, label=right: $r_{a-1}$] {};
\draw[edge] (-75: \r) arc (-75: 75: \r);

\node (p1) at ($ (c1) + (\r, 0) $) [ball, label=above: $s_1$] {};
\node[ellipsis] (e2) at ($ (p1) + (\r, 0) $) {};
\node[ellipsis] at ($ (p1) + (\r-\eps, 0) $) {};
\node[ellipsis] at ($ (p1) + (\r+\eps, 0) $) {};
\node (p2) at ($ (p1) + (2*\r, 0) $) [ball, label=above: $s_{b-1}$] {};
  
\node (c2) at ($ (p2) + (\r, 0) $) [ball, label=above left: $s_b$] {};
\node (k2) at ($ (c2) + (\r,0)$) {$K_{c+1}$};
\node (t1) at ($ (k2)! 1! 30: (c2) $)[ball, label=left: $t_1$] {};
\node (tc) at ($ (k2)! 1! -30: (c2) $)[ball, label=above left: $t_c$] {};

\draw[edge] (c1) -- (p1) -- ($(p1) + (\r-2*\eps, 0)$);
\draw[edge] (c2) -- (p2) -- ($(p2) - (\r-2*\eps, 0)$);
\draw[edge] (c2) arc (180: 180-75: \r);
\draw[edge] (c2) arc (180: 180+75: \r);
\end{tikzpicture}
\caption{The barbell $K(a1^bc)$.}
\label{fig:barbell}
\end{figure}
For $c\in\{0,1\}$, the graph $K(a1^bc)$ reduces to a lollipop,
and the desired formula
reduces to \cref{thm:lollipop}. 
Below we can suppose that $c\ge 2$.
We consider a graph family 
\[
\{G_{b,\,c-k,\,k}\colon k=0,1,\dots,c\}
\]
defined as follows.
Define $G_{b,c,0}=K(a1^bc)$.
For $1\le k\le c$, define $G_{b,\,c-k,\,k}$
to be the graph obtained from~$K(a1^bc)$ 
by removing the edges $s_bt_1$, $\dots$, $s_bt_k$.
In particular, 
\begin{itemize}
\item
$G_{b,\,1,\,c-1}
=K(a1^{b+1}(c-1))$, and
\item
$G_{b,0,c}$ is the disjoint union
of the lollipop $K(a1^b)$ and the complete graph $K_c$.
\end{itemize}
By applying \cref{thm:3del} for the vertex triple 
$(s_b,\,
t_{k+1},\,
t_{k+2})$
in $G_{b,\,c-k,\,k}$, we obtain
\[
X_{G_{b,\,c-k,\,k}}
=2X_{G_{b,c-k-1,k+1}}
-X_{G_{b,c-k-2,k+2}}
\quad\text{for $0\le k\le c-2$}.
\]
Therefore,
one may deduce iteratively that
\begin{align*}
X_{K(a1^bc)}
=X_{G_{b,c,0}}
&=
2X_{G_{b,c-1,1}}
-X_{G_{b,c-2,2}}
=
3X_{G_{b,c-2,2}}
-2X_{G_{b,c-3,3}}
\\
&=
\dotsm
=
cX_{G_{b,1,c-1}}
-(c-1)X_{G_{b,0,c}}
\\
&=
cX_{K(a1^{b+1}(c-1))}
-(c-1)
X_{K(a1^b)}
X_{K_c}.
\end{align*}
Then we can deduce by bootstrapping that
\begin{align*}
X_{K(a1^bc)}
&=
cX_{K(a1^{b+1}(c-1))}
-(c-1)
X_{K(a1^b)}
X_{K_c}
\\
&=
c\brk1{
(c-1)X_{K(a1^{b+2}(c-2))
}
-(c-2)
X_{K(a1^{b+1})}
X_{K_{c-1}}}
-(c-1)
X_{K(a1^b)}
X_{K_c}
\\
&=
c(c-1)\brk1{
(c-2)X_{K(a1^{b+3}(c-3))}
-(c-3)X_{K(a1^{b+2})}X_{K_{c-2}}}
\\
&\qquad
-c(c-2)
X_{K(a1^{b+1})}
X_{K_{c-1}}
-(c-1)
X_{K(a1^b)}
X_{K_c}
\\
&=
\dotsm
\\
&=
c!\,
X_{K(a1^{b+c})}
-
\sum_{i=0}^{c-2}
\frac{c!(c-i-1)}{(c-i)!}
X_{K_{c-i}}
X_{K(a1^{b+i})}.
\end{align*}
By \cref{X.complete,thm:lollipop}, 
we obtain
\begin{equation}\label{pf:X.barbell}
\frac{X_{K(a1^bc)}}{(a-1)! c!}
=
\sum_{
I\vDash n,\
i_{-1}\ge a
}
w_I 
e_I
-
\sum_{i=0}^{c-2}
\sum_{
(c-i)J\vDash n,\
j_{-1}\ge a
}
(c-i-1)
w_J
e_{(c-i)J}.
\end{equation}
We can split it as 
\begin{equation}\label{pf:barbell.Y1Y2}
\frac{X_{K(a1^bc)}}{(a-1)! c!}
=Y_1+Y_2,
\end{equation}
where $Y_1$ is the part containing $e_1$,
and $Y_2$ the part without~$e_1$. 
Let
\begin{align}
\notag
\mathcal W_n
&=
\{i_1i_2\dotsm\vDash n\colon
i_1,i_2,\dots\ge2\},
\\
\label{def:barbell:A}
\mathcal A_n
&=
\{I\in\mathcal W_n\colon 
i_{-1}\ge a,\
i_1\le c\}
\quad\text{and}
\\
\label{def:barbell:B}
\mathcal B_n
&=
\{I\in\mathcal W_n\colon
i_{-1}\ge a,\
i_1\ge c+1
\}.
\end{align}
Then $\mathcal A_n\cap\mathcal B_n=\emptyset$ and 
\[
\mathcal A_n
\sqcup
\mathcal B_n
=
\{I\in\mathcal W_n\colon
i_{-1}\ge a\}.
\]
From \cref{pf:X.barbell}, we obtain
\[
Y_1
=
\sum_{
J\in\mathcal A_{n-1}
\sqcup\mathcal B_{n-1}
}
w_{1J} e_{1J}
-
\sum_{i=0}^{c-2}
\sum_{
(c-i)J\in\mathcal A_{n-1}
}
(c-i-1)
w_{1J}
e_{1(c-i)J}.
\]
Considering $I=(c-i)J$ in the negative part.
When $i$ runs from $0$ to $c-2$
and $J$ runs over compositions 
such that $(c-i)J\in\mathcal A_{n-1}$,
$I$ runs over all compositions in $\mathcal A_{n-1}$. 
Since
\[
(c-i-1)w_{1J}=w_I
\quad\text{and}\quad
e_{1(c-i)J}
=
e_{1I},
\]
we can deduce that
\begin{equation}\label{pf:barbell.Y1}
Y_1
=
\sum_{
J\in
\mathcal A_{n-1}\sqcup
\mathcal B_{n-1}
}
w_{1J}
e_{1J}
-
\sum_{
I\in\mathcal A_{n-1}
}
w_{1I}
e_{1I}
=
\sum_{
J\in\mathcal B_{n-1}
}
w_{1J} 
e_{1J}.
\end{equation}
On the other hand, 
by \cref{pf:X.barbell}, we find
\[
Y_2
=
\sum_{
I\in
\mathcal A_n\sqcup
\mathcal B_n
}
w_I
e_I
-
\sum_{i=0}^{c-2}
\sum_{
(c-i)J\in\mathcal A_n
}
(c-i-1)
w_J 
e_{(c-i)J}.
\]
Similarly, we consider $I=(c-i)J$ in the negative part.
When $i$ runs from $0$ to $c-2$
and $J$ runs over compositions 
such that $(c-i)J\in\mathcal A_n$,
$I$ runs over all compositions in $\mathcal A_n$. 
Note that
\[
(c-i-1)
w_J
=
(i_1-1)
w_{I\backslash i_1}
\quad\text{and}\quad
e_{(c-i)J}
=
e_I,
\]
where $I\backslash i_1=i_2\dotsm i_{-1}$.
Therefore,
\begin{align*}
Y_2
&=
\sum_{I\in\mathcal A_n}
w_I
e_I
+
\sum_{I\in\mathcal B_n}
w_I
e_I
-
\sum_{
I\in\mathcal A_n}
(i_1-1)
w_{I\backslash i_1}
e_I
=
\sum_{
I\in
\mathcal B_n}
w_I
e_I
+
\sum_{
I\in
\mathcal A_n}
f_I
e_I,
\end{align*}
where
\[
f_I
=
w_I-
(i_1-1)
w_{I\backslash i_1}
=
(i_2-i_1)
\prod_{j\ge 3}(i_j-1).
\]
Note that the involution $\phi$ defined
for the compositions $I\in\mathcal A_n$ such that $i_2\le c$
by exchanging the first two parts satisfies $f_{\phi(I)}+f_I=0$.
Therefore,
\[
Y_2
=
\sum_{
I\in\mathcal B_n
}
w_I \cdotp 
e_I
+
\sum_{
I\in\mathcal A_n,\
i_2\ge c+1}
f_I
\cdotp 
e_I.
\]
In view of \cref{def:barbell:A},
the last sum can be recast
by considering the possibility of $i_1=1$ as 
\[
\sum_{
\substack{
I\vDash n,\
i_{-1}\ge a\\
2\le i_1\le c<c+1\le i_2
}}
f_I
\cdotp e_I
=
\sum_{
\substack{
I\vDash n,\
i_{-1}\ge a\\
1\le i_1\le c< i_2
}}
f_I
\cdotp e_I
-
\sum_{
\substack{
J\vDash n-1,\
j_{-1}\ge a\\
j_1\ge c+1
}}
\prod_{k\ge 1}(j_k-1)
\cdotp 
e_{1J},
\]
in which the negative part is exactly $Y_1$ by \cref{pf:barbell.Y1}. 
Therefore, 
\[
Y_2
=
\sum_{
I\in\mathcal B_n}
w_I 
e_I
+
\sum_{
I\vDash n,\
i_{-1}\ge a,\
i_1\le c< i_2}
f_I
\cdotp e_I
-Y_1.
\]
Hence by \cref{pf:barbell.Y1Y2,def:barbell:B},
we obtain the formula as desired.
\end{proof}

For example, 
\begin{align*}
X_{K(31^2 2)}
&=
(3-1)!\
2!
\brk4{
\sum_{
I\vDash 7,\
i_1,i_{-1}\ge 3}
w_I
e_I
+
\sum_{
I\vDash 7,\
i_{-1}\ge 3,\
i_1\le 2< i_2}
(i_2-i_1)
\prod_{j\ge 3}(i_j-1)
e_I}\\
&=
28e_7
+20e_{61}
+12e_{52}
+68e_{43}
+16e_{3^2 1}.
\end{align*}
We remark that \cref{thm:barbell} reduces to \cref{thm:lollipop} 
when $c=0$.
In view of the factor $(i_2-i_1)$ in \cref{thm:barbell},
we do not think it easy to derive \cref{thm:barbell}
by applying \citeauthor{Tom24}'s $K$-chain formula to barbells.
The next two formulas for the graphs 
$K(ab)$ and dumbbells $K(a1b)$ 
are particular cases of \citeauthor{Tom24}'s $K$-chain formula. 
They are straightforward from \cref{thm:barbell}.

\begin{corollary}[\citeauthor{Tom24}]\label{cor:Kab:dumbbell}
Let $a\ge 1$ and $0\le b\le a$.
Then
\begin{align*}
X_{K(ab)}
&=
(a-1)!\,
b!\
\sum_{i=0}^{b}
(a+b-2i)
e_{(a+b-i)i},
\quad\text{and}\\
X_{K(a1b)}
&=
(a-1)!\,b!\
\brk3{
(a-1)(b+1)e_{a(b+1)}
+
\sum_{i=0}^{b}
(a+b+1-2i)e_{(a+b+1-i)i}
}.
\end{align*}
\end{corollary}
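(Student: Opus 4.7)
The plan is to derive both identities as specializations of Theorem (barbells), taking $m=0$ for $K(ab) = K(a1^0 b)$ and $m=1$ for the dumbbell $K(a1b) = K(a1^1 b)$, with $c=b$ in each case. In each specialization I would carefully enumerate which compositions $I$ survive the constraints in the two sums $S_1$ and $S_2$ of Theorem (barbells), since the small total $n = a+b$ (resp.\ $n = a+b+1$) together with the requirement $i_{-1} \ge a$ severely restricts the length $\ell(I)$.

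For $K(ab)$: here $n = a+b$. For $S_1$, the requirement $i_1 \ge b+1$ combined with $i_{-1} \ge a$ forces $i_1 + i_{-1} \ge a+b+1 > n$ as soon as $\ell(I) \ge 2$, so only $I = (a+b)$ survives, contributing $(a+b)\,e_{a+b}$. For $S_2$, a similar count rules out $\ell(I) \ge 3$; the two-part compositions $I = (k,\,a+b-k)$ with $1 \le k \le \min(b, a-1)$ contribute, and the factor $(i_2-i_1)\prod_{j\ge 3}(i_j-1)$ collapses to $a+b-2k$. Absorbing $S_1$ as the $k=0$ case yields $\sum_{k=0}^{b}(a+b-2k)\,e_{(a+b-k)k}$; when $b=a$ the apparently missing term $k=a$ is harmless since its coefficient vanishes.

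For $K(a1b)$: now $n = a+b+1$, so the length obstruction becomes $\ell(I)\le 2$, bypassing the need for $\ell\ge 3$. In $S_1$, besides $I = (a+b+1)$ there is now the two-part solution $I=(b+1,a)$, contributing $w_I = (b+1)(a-1)$; this is exactly the isolated summand $(a-1)(b+1)\,e_{a(b+1)}$ in the claim. In $S_2$, two-part compositions $I=(k,\,a+b+1-k)$ for $1\le k\le b$ give coefficient $a+b+1-2k$, and merging the $\ell(I)=1$ piece of $S_1$ as the $k=0$ term produces $\sum_{k=0}^{b}(a+b+1-2k)\,e_{(a+b+1-k)k}$.

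The whole argument is really bookkeeping, so I do not anticipate a serious obstacle; the step most prone to error is tracking the boundary interval for $k$ in $S_2$ (the $\min(b, a-1)$ in $K(ab)$ versus the corollary's $\min = b$), and checking that the interpretations of $e_{a+b}$ as $e_{(a+b, 0)}$ and of $e_{a(b+1)}$ as the unordered multiset $\{a, b+1\}$ match the $\rho$-based convention fixed in Section (on commutative symmetric functions). Once these are aligned, the identities follow by direct comparison.
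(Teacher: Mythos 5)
Your proposal is correct and follows exactly the paper's route: both identities are obtained by specializing the barbell theorem with the middle-path parameter set to $0$ (resp.\ $1$) and the last-clique parameter set to $b$, the paper's own proof being a one-line version of this. Your explicit bookkeeping of which compositions survive (the length bounds, the $\min(b,a-1)$ boundary whose missing term has vanishing coefficient when $b=a$, and the lone two-part survivor $(b+1,a)$ in the first sum for the dumbbell) correctly fills in the details the paper leaves implicit.
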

\begin{proof}
In \cref{thm:barbell},
taking $n=a+c$ and $b=0$ 
yields the first formula, 
while taking $n=a+1+b$ and $b=1$ 
yields the second.
\end{proof}

We remark that the $e$-positivity of 
the graphs $K(ab)$ and $K(a1b)$ are clear from \cref{prop:epos:alpha<=2}.
On the other hand, in \cref{cor:Kab:dumbbell},
taking $b=1$ in the first formula and
taking $b=0$ in the second result in the same formula 
\[
X_{K(a1)}
=
(a-1)!
\brk1{
(a+1)e_{a+1}
+(a-1)e_{a1}
}.
\]

\subsection{Hats and generalized bulls}
A \emph{hat} is a graph obtained by adding an edge to a path.
Let 
\[
n=a+m+b,\quad\text{where $m\ge 2$ and $a,b\ge 0$}.
\]
The hat $H_{a,m,b}$
is the graph obtained from the path $P_n=v_1\dotsm v_n$
by adding the edge $v_{a+1}v_{a+m}$,
see \cref{fig:hat}. 
It is a unicyclic graph with the cycle length~$m$.
By definition,
\[
\abs{V(H_{a,m,b})}
=\abs{E(H_{a,m,b})}
=n.
\]
It is clear that $H_{a,m,b}$ is isomorphic to $H_{b,m,a}$.
In particular, 
the hat $H_{0,m,b}$ is the tadpole $T_{m,b}$,
the hat $H_{a,2,b}$ is a path with a repeated edge,
and the hat $H_{a,3,b}$ is the generalized bull $K(1^{a+1}21^b)$.

\begin{figure}[h]
\begin{tikzpicture}
\node[ball] (1) at (1, 0) {};
\node[below=2pt] at (1) {$1$};
\node[ball] (2) at (2, 0) {};
\node[ellipsis] (e1) at (3,0) {};
\node[ellipsis] (e1l) at ($ (e1) - (\eps, 0) $) {};
\node[ellipsis] (e1r) at ($ (e1) + (\eps, 0) $) {};
\node[ball] (a+1) at (4, 0) {};
\node[below=2pt] at (a+1) {$a+1$};
\node[ball] (b+2) at (5, 0) {};
\node[ellipsis] (e2) at (6,0) {};
\node[ellipsis] (e2l) at ($ (e2) - (\eps, 0) $) {};
\node[ellipsis] (e2r) at ($ (e2) + (\eps, 0) $) {};
\node[ball] (a+m) at (7, 0) {};
\node[below=2pt] at (a+m) {$a+m$};
\node[ball] (a+m+1) at (8, 0) {};
\node[ellipsis] (e3) at (9,0) {};
\node[ellipsis] (e3l) at ($ (e3) - (\eps, 0) $) {};
\node[ellipsis] (e3r) at ($ (e3) + (\eps, 0) $) {};
\node[ball] (n) at (10, 0) {};
\node[below=2pt] at (n) {$n$};

\draw[edge] (1) -- (2) -- ($ (e1l) - (\eps, 0) $);
\draw[edge] (1) -- (2) -- ($ (e1l) - (\eps, 0) $);
\draw[edge] ($ (e1r) + (\eps, 0) $) -- (a+1) -- (b+2) -- ($ (e2l) - (\eps, 0) $);
\draw[edge] ($ (e2r) + (\eps, 0) $) -- (a+m) -- (a+m+1) -- ($ (e3l) - (\eps, 0) $);
\draw[edge] ($ (e3r) + (\eps, 0) $) -- (n);
\draw[edge] (a+1) arc [start angle=180, end angle=0,
x radius=1.5, y radius=0.5];
\end{tikzpicture}
\caption{The hat $H_{a,m,b}$.}
\label{fig:hat}
\end{figure}
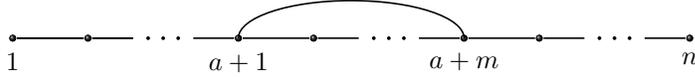

Computing $X_{H_{a,m,b}}$,
we encounter the chromatic symmetric function of spiders with $3$ legs.
For any partition $\lambda=\lambda_1\lambda_2\dotsm\vdash n-1$,
the \emph{spider} $S(\lambda)$ is the tree of order~$n$
obtained by identifying an end of 
the paths $P_{\lambda_1+1}$, $P_{\lambda_2+1}$, $\dots$,
see \cref{fig:Sabc} for an illustration of $S(abc)$.
\begin{figure}[htbp]
\begin{tikzpicture}
\node (O) at (0,0) [ball]{};

\node (a1) at (\r*2, \r) [ball]{};
\node (b1) at (\r*2, 0) [ball]{};
\node (c1) at (\r*2, -\r) [ball]{};

\node (a2) at (\r*3, \r) [ellipsis]{};
\node[above] at (a2) {$a$ vertices};

\node (b2) at (\r*3, 0) [ellipsis]{};
\node[above] at (b2) {$b$ vertices};

\node (c2) at (\r*3, -\r) [ellipsis]{};
\node[above] at (c2) {$c$ vertices};

\node (a2l) at (\r*3-\eps, \r) [ellipsis]{};
\node (b2l) at (\r*3-\eps, 0) [ellipsis]{};
\node (c2l) at (\r*3-\eps, -\r) [ellipsis]{};

\node (a2r) at (\r*3+\eps, \r) [ellipsis]{};
\node (b2r) at (\r*3+\eps, 0) [ellipsis]{};
\node (c2r) at (\r*3+\eps, -\r) [ellipsis]{};

\node (a3) at (\r*4, \r) [ball]{};
\node (b3) at (\r*4, 0) [ball]{};
\node (c3) at (\r*4, -\r) [ball]{};

\draw[edge] (O) -- (a1) -- ($(a2l) + (-\eps, 0)$);
\draw[edge] (O) -- (b1) -- ($(b2l) + (-\eps, 0)$);
\draw[edge] (O) -- (c1) -- ($(c2l) + (-\eps, 0)$);

\draw[edge] ($(a2r) + (\eps, 0)$) -- (a3);
\draw[edge] ($(b2r) + (\eps, 0)$) -- (b3);
\draw[edge] ($(c2r) + (\eps, 0)$) -- (c3);
\end{tikzpicture}
\caption{The spider $S(abc)$, which has $n=a+b+c+1$ vertices.}
\label{fig:Sabc}
\end{figure}
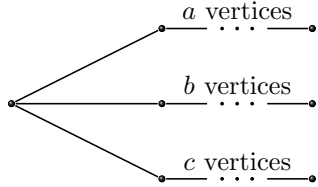
\citet[Lemma~4.4]{Zhe22} showed that for any multiset $\{a,b,c\}$ 
and $n=a+b+c+1$,
\begin{equation}\label{X:spider.abc}
X_{S(abc)}
=
X_{P_n}
+
\sum_{i=1}^c
X_{P_i}
X_{P_{n-i}}
-
\sum_{i=b+1}^{b+c}
X_{P_i}
X_{P_{n-i}}.
\end{equation}

For proving the $e$-positivity of hats,
we introduce a special composition bisection defined as follows.
For any composition $K$
of size at least $b+1$,
we define a \emph{bisection} $K=K_1K_2$ by
\[
\abs{K_1}
=\sigma_K^+(b+1).
\]
It is possible that $K_2$ is empty.
A key property of this bisection is the implication
\begin{equation}\label{pf:hat:implication:K1}
H=K_1H'
\implies
H_1=K_1.
\end{equation}

\begin{theorem}\label{thm:epos:hat}
Every hat is $e$-positive.
\end{theorem}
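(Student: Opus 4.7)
By the symmetry $H_{a,m,b}\cong H_{b,m,a}$ and \cref{thm:tadpole} (which handles the degenerate cases $a=0$ or $b=0$), we may assume $1\le a\le b$. For the multigraph case $m=2$, the function $X_{H_{a,2,b}}$ reduces in essence to $X_{P_n}$, so below we restrict to $m\ge 3$. The plan is to execute the composition method: derive a noncommutative analog $\widetilde X_{H_{a,m,b}}\in\mathrm{NSym}$ whose $\Lambda^K$-expansion is explicit but signed, regroup the $\Lambda^K$-coefficients via the bisection $K=K_1K_2$ introduced above, and verify the nonnegativity of the coefficient within each bisection class.

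First I would apply the triple-deletion property (\cref{thm:3del}) with the stable triple $T=\{v_{a+1},v_{a+m},v_{a+m+1}\}$ in the base graph $G=H_{a,m,b}\setminus\{v_{a+1}v_{a+m},\,v_{a+m}v_{a+m+1}\}=P_{a+m}\sqcup P_b$, taking $e_1=v_{a+1}v_{a+m}$, $e_2=v_{a+m}v_{a+m+1}$ and $e_3=v_{a+1}v_{a+m+1}$. A direct inspection shows that $G_1=T_{m,a}\sqcup P_b$, $G_{23}=H_{a,m+1,b-1}$ and $G_3=S(a,m-1,b)$, yielding the recursion
\[
X_{H_{a,m,b}}=X_{T_{m,a}}X_{P_b}+X_{H_{a,m+1,b-1}}-X_{S(a,m-1,b)}.
\]
Iterating this recursion in $b$ down to the base case $X_{H_{a,m+b,0}}=X_{T_{m+b,a}}$ unfolds $X_{H_{a,m,b}}$ as a signed linear combination of products $X_{T_{m+k,a}}X_{P_{b-k}}$ and spiders $X_{S(a,m+k-1,b-k)}$. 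Expanding each spider by Zheng's formula~\eqref{X:spider.abc}, each tadpole factor by \cref{thm:tadpole}, and each path/cycle factor via~\eqref{wX.path}--\eqref{wX.cycle}, and then lifting the products to $\mathrm{NSym}$ via $\Lambda^I\Lambda^J=\Lambda^{I\!J}$ produces the desired analog.

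The heart of the proof is to show $[\Lambda^K]\widetilde X_{H_{a,m,b}}\ge 0$ for every composition $K\vDash n$. The bisection $K=K_1K_2$ with $|K_1|=\sigma_K^+(b+1)$ is tailored so that the positive contributions (from the tadpole--path products and the base tadpole) and the negative contributions (from the spider expansions via Zheng's formula and the various $X_{P_n}$ correction terms) both decompose naturally along this bisection: $K_1$ plays the role of the ``initial block covering at least the right tail of length $b$ plus its boundary,'' and $K_2$ the trailing piece. The implication $H=K_1H'\Rightarrow H_1=K_1$ guarantees that each composition is assigned to a unique bisection class, so $[\Lambda^K]\widetilde X_{H_{a,m,b}}$ can be rewritten as a sum indexed only by admissible pairs $(K_1,K_2)$. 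I expect \cref{lem:Theta+.t} together with the reflection identity of \cref{lem:sigma+-:Theta+-} to then collapse the resulting expression into a $w$-type weight times a manifestly nonnegative count, possibly after a sign-reversing involution matching pairs of cancelling terms.

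The main obstacle is the delicate bookkeeping among the three competing signed contributions: the spider corrections from Zheng's formula, the $-aX_{P_n}$ correction embedded in each tadpole formula, and a cumulative $-bX_{P_n}$ residual produced by the iteration. Each must be traced to a specific bisection class where it combines with the corresponding positive contributions. Should a direct comparison of coefficients fail, I would fall back on constructing a sign-reversing involution on the ``bad'' compositions (analogous to the one designed in the proof of \cref{thm:ribbon.Schur:cycle}) to pair them up with a fixed residue, leaving the sought nonnegativity.
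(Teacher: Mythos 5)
Your setup is sound: the triple $\{v_{a+1},v_{a+m},v_{a+m+1}\}$ is stable in $P_{a+m}\sqcup P_b$, the identifications $G_1=T_{m,a}\sqcup P_b$, $G_{23}=H_{a,m+1,b-1}$, $G_3=S(a,m-1,b)$ are correct, and the recursion you obtain is a valid mirror image of the paper's (the paper slides the chord to $v_{a+2}v_{a+m}$ and iterates in $m$ down to $m=2$, arriving at $X_G=X_{P_n}+\sum_{k=1}^{m-2}\bigl(X_{S(a+k,\,b,\,m-k-1)}-X_{P_{a+k}}X_{T_{m-k,\,b}}\bigr)$, whereas you grow the cycle into the $b$-leg and terminate at $X_{T_{m+b,a}}$). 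Either unfolding, combined with \eqref{X:spider.abc}, \cref{X.path} and \cref{thm:tadpole}, yields a signed $\Lambda^K$-expansion of comparable complexity.

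The genuine gap is that you never prove the nonnegativity of the resulting coefficients; everything after the recursion is a statement of intent ("I expect \dots to collapse", "should a direct comparison fail, I would fall back on \dots"). That verification is the entire mathematical content of the theorem, and in the paper it occupies two nontrivial arguments: first, an explicit bijection $h(P,1Q')=(1P_2,\,P_1Q')$ between the negative terms with $q_1=1$ and a matching subfamily $\mathcal A_1$ of positive terms, whose well-definedness hinges on the implication \eqref{pf:hat:implication:K1} and on checking four membership conditions in each direction; second, for $q_1\ne 1$, a grouping of the surviving terms by the concatenation $K=PQ=J_1\overline{I}J_2$ and a telescoping evaluation of the partial sums $S^k$ using \cref{lem:wIJ}, which produces the closed form $S^l=w_K\cdot\frac{\abs{I^{l+1}}-m+1}{m_{l+1}-1}$ and the case split $r=l$ versus $r\ge l+1$ that finally gives $\Delta(K)\ge 0$. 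Nothing in your proposal substitutes for either step, and the fallback of "a sign-reversing involution on the bad compositions" is not something that exists by default — whether the signed terms admit such a pairing is precisely what has to be demonstrated. A smaller symptom of the same issue: you refer to "the $-aX_{P_n}$ correction embedded in each tadpole formula," but \cref{thm:tadpole} is already manifestly positive; the only negative inputs in your unfolding are the spiders, so even the inventory of which terms must be cancelled has not been pinned down. As written, the proposal is a plan for a proof, not a proof.
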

\begin{proof}
Let $n=a+m+b$.
Since $X_{H_{a,2,b}}=X_{P_n}$ is $e$-positive,
we can suppose that 
$m\ge 3$.
Let $G=H_{a,m,b}$.
When $m\ge 3$, applying \cref{thm:3del}
for the triangle $e_1 e_2 e_3$ in \cref{fig:hat:3del}, 
we obtain 
\begin{equation}\label{rec.Hamb}
X_{H_{a,m,b}}
=X_{H_{a+1,\,m-1,\,b}}
+X_{S(a+1,\,m-2,\,b)}
-X_{P_{a+1}}
X_{T_{m-1,\,b}}.
\end{equation}
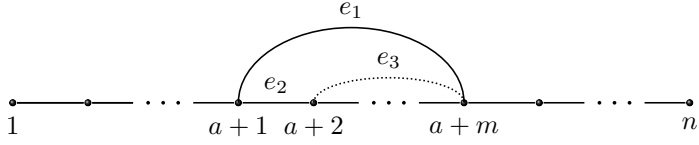
\begin{figure}[h]
\begin{tikzpicture}
\node[ball] (1) at (1, 0) {};
\node[below=2pt] at (1) {$1$};
\node[ball] (2) at (2, 0) {};
\node[ellipsis] (e1) at (3,0) {};
\node[ellipsis] (e1l) at ($ (e1) - (\eps, 0) $) {};
\node[ellipsis] (e1r) at ($ (e1) + (\eps, 0) $) {};
\node[ball] (a+1) at (4, 0) {};
\node[below=2pt] at (a+1) {$a+1$};
\node[ball] (a+2) at (5, 0) {};
\node[below=2pt] at (a+2) {$a+2$};
\node[ellipsis] (e2) at (6,0) {};
\node[ellipsis] (e2l) at ($ (e2) - (\eps, 0) $) {};
\node[ellipsis] (e2r) at ($ (e2) + (\eps, 0) $) {};
\node[ball] (a+m) at (7, 0) {};
\node[below=2pt] at (a+m) {$a+m$};
\node[ball] (a+m+1) at (8, 0) {};
\node[ellipsis] (e3) at (9,0) {};
\node[ellipsis] (e3l) at ($ (e3) - (\eps, 0) $) {};
\node[ellipsis] (e3r) at ($ (e3) + (\eps, 0) $) {};
\node[ball] (n) at (10, 0) {};
\node[below=2pt] at (n) {$n$};

\draw[edge] (1) -- (2) -- ($ (e1l) - (\eps, 0) $);
\draw[edge] (1) -- (2) -- ($ (e1l) - (\eps, 0) $);
\draw[edge] ($ (e1r) + (\eps, 0) $) -- (a+1) -- (a+2) -- ($ (e2l) - (\eps, 0) $);
\draw[edge] ($ (e2r) + (\eps, 0) $) -- (a+m) -- (a+m+1) -- ($ (e3l) - (\eps, 0) $);
\draw[edge] ($ (e3r) + (\eps, 0) $) -- (n);
\draw[edge] (a+1) arc [start angle=180, end angle=0,
x radius=3*\r/2, y radius=\r];
\draw[edge, densely dotted] (a+2) arc [start angle=180, end angle=0,
x radius=\r, y radius=\r/3];

\node[above] at (5.5, \r) {$e_1$};
\node[above] at (4.5, 0) {$e_2$};
\node[above] at (6, \r/3) {$e_3$};
\end{tikzpicture}
\caption{The triangle $e_1e_2e_3$ in
applying the triple-deletion property to the hat $H_{a,m,b}$.}
\label{fig:hat:3del}
\end{figure}
By adding \cref{rec.Hamb} for the parameter $m$ 
from $3$ to the value $m$, 
we obtain
\[
X_G
=
X_{P_n}
+
\sum_{k=1}^{m-2}
\brk1{
X_{S(a+k,\,b,\,m-k-1)}
-X_{P_{a+k}}
X_{T_{m-k,\,b}}
}.
\]
Substituting \cref{X:spider.abc} for spiders into the formula above, 
we deduce that 
\begin{align*}
X_G
&=
X_{P_n}
+
\sum_{k=1}^{m-2}
\brk4{
X_{P_n}
+
\sum_{i=1}^{m-k-1} 
\brk1{
X_{P_i}
X_{P_{n-i}}
-
X_{P_{b+i}}
X_{P_{n-b-i}}}
-
X_{P_{a+k}}
X_{T_{m-k,\,b}}
}\\
&=
\sum_{i=0}^{m-2}
(m-1-i)
X_{P_i}
X_{P_{n-i}}
-
\sum_{i=1}^{m-2}
(m-1-i)
X_{P_{b+i}}
X_{P_{n-b-i}}
-
\sum_{i=1}^{m-2}
X_{T_{m-i,\,b}}
X_{P_{a+i}}.
\end{align*}
Substituting \cref{X.path} for paths
and \cref{thm:tadpole} for tadpoles into it,
we obtain
\begin{align}
\label{pf:X.hat1}
X_G
&=
\smashoperator{
\sum_{
\substack{
K=I\!J\vDash n\\
\abs{I}\le m-2
}}}
\brk1{
m-1-\abs{I}}
w_I
w_J
e_K
-
\smashoperator{
\sum_{\substack{
K=PQ\vDash n\\
b+1
\le\abs{P}
\le b+m-2
}}}
\brk1{
b+m-1-\abs{P}}
w_P
w_Q
e_K
-
\smashoperator{
\sum_{
\substack{
K=PQ\vDash n\\
b+2
\le\abs{P}
\le b+m-1
}}}
\Theta_P^+(b+1)
w_P
w_Q
e_K.
\end{align}
Note that the upper (reps., lower)
bound for $\abs{P}$ in the second (resp., third)
sum can be replaced with $b+m-1$ (resp., $b+1$).
As a consequence, one may think the last two sums 
run as for the same set of pairs $(P,Q)$.
By \cref{lem:sigma+-:Theta+-},
we can merge their coefficients of $w_P w_Q e_K$ as
\begin{align*}
\brk1{
b+m-1-\abs{P}}
+\Theta_P^+(b+1)
&=
m-2-\abs{P}+\sigma_P^+(b+1).
\end{align*}
Therefore, we can rewrite \cref{pf:X.hat1} as
\begin{equation}\label{pf:hat:X}
X_G
=\sum_{
(I,J)\in\mathcal A
}
a_I
w_I
w_J
e_{I\!J}
-
\sum_{(P,Q)\in\mathcal B}
b_P
w_P
w_Q
e_{PQ},
\end{equation}
where 
$a_I=m-1-\abs{I}$,
$b_P=m-2-\abs{P}+\sigma_P^+(b+1)$,
\begin{align*}
\mathcal{A}
&=
\{(I,J)\colon
I\!J\vDash n,\
\abs{I}\le m-2,\
w_I 
w_J\ne 0
\},\quad\text{and}\\
\mathcal{B}
&=
\{(P,Q)\colon 
PQ\vDash n,\ 
b+1
\le\abs{P}
\le b+m-1,\
w_P
w_Q
\ne 0
\}.
\end{align*}
One should note the following facts:
\begin{itemize}
\item
When $(I,J)\in\mathcal{A}$, it is possible that $I=\epsilon$ is the empty composition.
\item
$a_I\ge 1$ for any $(I,J)\in\mathcal{A}$.
\item
$b_P\ge 0$ for any $(P,Q)\in\mathcal{B}$. Moreover, together with \cref{pf:X.hat1},
one may infer that 
\begin{equation}\label{pf:hat:eqrl:bI=0}
b_P=0\iff
\begin{cases*}
\abs{P}=b+m-1\\
\Theta_P^+(b+1)=0
\end{cases*}
\iff
P
=P_1P_2
\text{ with }
\brk1{
\abs{P_1},\,
\abs{P_2}}
=(b+1,\,
m-2).
\end{equation}
\end{itemize}
We will deal with the cases $q_1=1$ and $q_1\ne 1$ respectively. Let 
\begin{align*}
\mathcal B_1
&=
\{(P,Q)\in\mathcal B\colon
q_1=1,\ b_P>0\}\\
&=
\{(P,1Q')\colon 
P1Q'\vDash n,\ 
b+1
\le\abs{P}
\le b+m-1,\
w_P
w_{1Q'}
\ne 0,\
b_P>0
\},
\quad\text{and}\\
\mathcal B_2
&=
\{(P,Q)\in\mathcal B\colon
q_1\ne 1\}\\
&=
\{(P,Q)\colon 
PQ\vDash n,\ 
b+1
\le\abs{P}
\le b+m-1,\
w_{PQ}
\ne 0
\}.
\end{align*}

Let $(P,1Q')\in\mathcal B_1$.
We shall show that the map $h$ defined by
\[
h(P,\,1Q')
=
(1P_2,\,P_1 Q')
\]
is a bijection from $\mathcal B_1$ to the set 
\begin{align*}
\mathcal A_1
&=
\brk[c]1{(1I',\,J)\in \mathcal{A}\colon
\abs{J_2}\ge a}\\
&=
\brk[c]1{(1I',\,J)\colon 
1I'\!J\vDash n,\ 
\abs{1I'}\le m-2,\
w_{1I'}w_J\ne0,\
\abs{J_2}\ge a}.
\end{align*}
Before that,
it is direct to check by definition
that 
\begin{align}
\notag
a_{1P_2}
&=
m-1-\abs{1P_2}
=
b_P,\\
\label{pf:hat.wfwg}
w_{1 P_2}
w_{P_1 Q'}
&=
w_P 
w_{1Q'},\quad\text{and}\\
\notag
e_{1P_2P_1Q'}
&=
e_{P1Q'}.
\end{align}
Therefore, if the bijectivity is proved,
then we can simplify \cref{pf:hat:X} to
\begin{equation}\label{pf:hat:X1}
X_G
=
\sum_{
\substack{
(I,J)\in\mathcal{A},\
i_1\ne 1}}
a_I
w_I
w_J
e_{I\!J}
-
\sum_{
(P,Q)\in\mathcal B_2}
b_P
w_P
w_Q
e_{PQ}
+
\sum_{
(I,J)\in\mathcal A_1'}
a_I
w_I
w_J
e_{I\!J},
\end{equation}
where 
$\mathcal A_1'
=\{(I,J)\in\mathcal{A}\colon i_1=1\}\backslash \mathcal A_1
=\{(I,J)\in\mathcal{A}\colon i_1=1,\ \abs{J_2}\le a-1\}$.

In order to establish the bijectivity of $h$, we need to prove that 
\begin{enumerate}
\item\label[itm]{itm:hat:h:A1}
$h(P,1Q')\in\mathcal A_1$,
\item\label[itm]{itm:hat:h:injectivity}
$h$ is injective, and
\item\label[itm]{itm:hat:h:surjectivity}
$h$ is surjective.
for any $(1I',J)\in\mathcal A_1$,
there exists $(P,1Q')\in\mathcal B_1$ such that 
$h(P,1Q')=(1I',J)$.
\end{enumerate}
We proceed one by one.
\noindent
\cref{itm:hat:h:A1}
If we write $h(P,Q)=(1I',J)$,
then by the implication \eqref{pf:hat:implication:K1},
\begin{equation}\label{pf:hat:P'Q1Q2=I2I1J'}
(I',\,J_1,\,J_2)=(P_2,\,P_1,\,Q').
\end{equation}
Let us check $(1I',J)\in\mathcal A_1$ by definition:
\begin{itemize}
\item
$1I'\!J=1P_2
\cdotp P_1Q'\vDash n$ since $P\cdotp 1Q'\vDash n$;
\item
$\abs{1I'}\le m-2$
since $0<b_P=m-2-\abs{P_2}$;
\item
$w_{1I'}
w_J
=w_{1P_2}
w_{P_1Q'}
=w_P
w_{1Q'}
\ne 0$; and
\item
$\abs{J_2}
=\abs{Q'}
=n-1-\abs{P}
\ge n-1-(b+m-1)
=a$.
\end{itemize}
\cref{itm:hat:h:injectivity}
If 
$h(P,1Q')
=h(\alpha,1\beta')
=(1I',J)$,
then by \cref{pf:hat:P'Q1Q2=I2I1J'},
$P
=P_1
P_2
=J_1 
I'
=\alpha_1
\alpha_2
=\alpha$
and
$Q'
=\beta'$.

\noindent
\cref{itm:hat:h:surjectivity}
Let $(1I',J)\in\mathcal A_1$.
Consider $(P,1Q')=(J_1 I',\,1J_2)$.
By the implication \eqref{pf:hat:implication:K1},
we obtain
\cref{pf:hat:P'Q1Q2=I2I1J'}. Thus
$h(P,Q)
=(1P_2,\,P_1 Q')
=(1I',\,J)$. 
It remains to check that $(P,1Q')\in\mathcal B_1$:
\begin{itemize}
\item
$P1Q'
=J_1I'1J_2\vDash n$
since 
$1I'J\vDash n$.
\item
$b+1
\le \abs{J_1}
\le \abs{J_1 I'}
=\abs{P}
=\abs{J_1 I'}
=n-1-\abs{J_2}
\le 
n-1-a
=b+m-1$.
\item
$w_P 
w_{1Q'}
=w_{J_1I'}
w_{1J_2}
=w_{1I'}
w_J\ne 0$.
\item
If $b_P=0$, then
$b_{J_1 I'}=0$.
By \eqref{pf:hat:implication:K1}
and \eqref{pf:hat:eqrl:bI=0}, 
$\abs{I'}=m-2$, a contradiction.  
Thus $b_P>0$.
\end{itemize}
This proves that $h$ is bijective.

It remains to deal with the case $q_1\ne 1$. 
Continuing with \cref{pf:hat:X1},
we decompose $\mathcal B_2$ as
\[
\mathcal B_2
=\bigsqcup_{
K\in\mathcal K}
\mathcal{B}(K),
\]
where
\begin{align*}
\mathcal K
&=
\{K\vDash n\colon
w_K\ne0,\
\abs{K_1}\le b+m-1\},
\quad\text{and}\\
\mathcal{B}(K)
&=
\{(P,Q)\in
\mathcal B_2\colon
PQ=K\}\\
&=\{(P,Q)\colon
PQ=K,\
b+1\le \abs{P}\le b+m-1\}.
\end{align*}
We remark that the bound restriction in $\mathcal K$ is to guarantee that $\mathcal{B}(K)$ is not trivial:
\[
\abs{K_1}\le b+m-1
\iff
\mathcal{B}(K)\ne\emptyset.
\]
In fact, the restriction implies $(K_1,K_2)\in\mathcal{B}(K)$;
conversely, if $\abs{K_1}\ge b+m$, then
$K$ has no prefix~$P$ such that 
$b+1\le \abs{P}\le b+m-1$.
This proves the equivalence relation.

Now, fix $K\in\mathcal K$. Let
\begin{align*}
\mathcal{A}(K)
&=
\{(I,J)\in\mathcal{A}\colon
i_1\ne 1,\
J_1\overline{I}J_2=K\}\\
&=
\{(I,J)\colon
\abs{I}\le m-2,\
J_1\overline{I}J_2=K
\}.
\end{align*}
Then the sets $\mathcal{A}(K)$ for $K\in\mathcal K$ are disjoint.
In fact, if 
\[
(I,J)\in\mathcal{A}(K)\cap\mathcal{A}(H),
\] 
then $K$ and $H$ have the same prefix $J_1=K_1$ 
by the implication \eqref{pf:hat:implication:K1},
the same suffix $J_2$, 
and the same middle part~$\overline{I}$;
thus $K=H$.
The pairs $(I,J)$ for the first sum in \cref{pf:hat:X1}
that we do not use to cancel the second sum 
form the set 
\begin{align*}
\mathcal A_2
&=
\{(I,J)\in\mathcal{A}
\colon i_1\ne 1\}
\backslash 
\sqcup_{K\in\mathcal K}\mathcal{A}(K)\\
&=
\{(I,J)\in\mathcal{A}
\colon i_1\ne 1,\
J_1\overline{I}J_2\not\in\mathcal K\}\\
&=
\{(I,J)\in\mathcal{A}
\colon i_1\ne 1,\
\abs{J_1}\ge b+m\}.
\end{align*}
Since
$e_{I\!J}=e_K=e_{PQ}$
for any $(I,J)\in\mathcal{A}(K)$
and $(P,Q)\in\mathcal{B}(K)$,
\cref{pf:hat:X1} can be recast as
\begin{equation}\label{pf:hat:X2}
X_G
=
\sum_{K\in\mathcal K}
\Delta(K)e_K
+\sum_{
(I,J)\in\mathcal A_1\cup\mathcal A_2
}
a_I
w_I
w_J
e_{I\!J},
\end{equation}
where
\begin{align*}
\Delta(K)
&=
\sum_{
(I,J)\in\mathcal A(K)
}
a_I
w_I
w_J
-
\sum_{
(P,Q)\in\mathcal{B}(K)
}
b_P
w_P
w_Q.
\end{align*}
Hence it suffices to show that $\Delta(K)\ge 0$.

Let $K_2=m_1m_2\dotsm$.
Then $m_i\ge 2$ for all $i$
since $w_K\ne 0$. 
For $i\ge 0$, 
we define 
\[
P^i
=K_1
\cdotp 
m_1\dotsm m_i,
\quad
Q^i
=m_{i+1}
m_{i+2}
\dotsm,
\quad
I^i
=m_i\dotsm m_1,
\quad\text{and}\quad
J^i
=K_1
\cdotp 
Q^i.
\]
Then $P^i_1=J^i_1=K_1$ 
by the implication \eqref{pf:hat:implication:K1},
\begin{align}
\label{def:hat:l}
\mathcal{B}(K)
&=
\{(P^0,Q^0),\,
\dots,\,
(P^l,Q^l)\},
\quad\text{where $\abs{P^l}
=\sigma_K^-(b+m-1)$, and}\\
\notag
\mathcal{A}(K)
&=
\{(I^0,\,J^0),\
\dots,\
(I^r,\,J^r)\},
\quad\text{
where $\abs{I^r}
=
\sigma_{K_2}^-(m-2)$.
}
\end{align}
We observe that 
\begin{itemize}
\item
$l\le\ell(K_2)-1$, since 
$\abs{Q^l}
=n-\abs{P^l}
\ge n-(b+m-1)=a+1\ge 1$; and
\item
$l\le r$, 
since $\abs{I^l}
=\abs{P^l}-\abs{K_1}
\le (b+m-1)-(b+1)
=m-2$.
\end{itemize}
Therefore,
\begin{equation}\label{DeltaK:hat:Sl}
\Delta(K)
=
S^l
+\sum_{i=l+1}^r 
a_{I^i}
w_{I^i}
w_{J^i},
\end{equation}
where
\[
S^k
=
\sum_{i=0}^k
\brk1{
a_{I^i}
w_{I^i}
w_{J^i}
-b_{P^i}
w_{P^i}
w_{Q^i}}
\quad\text{for $k\ge 0$}.
\]
Let us compare $a_{I^i}$ with $b_{P^i}$,
and compare $w_{I^i}w_{J^i}$ with $w_{P^i}w_{Q^i}$, respectively.
\begin{itemize}
\item
We have
$b_{P^i}=a_{I^i}-1$
for all $0\le i\le l$,
since by \cref{lem:sigma+-:Theta+-},
\[
\abs{P^i}-\sigma_{P^i}^+(b+1)
=\abs{P^i}-\abs{K_1}
=\abs{I^i}.
\]
\item
By \cref{lem:wIJ},
\begin{align}
\notag
w_{P^i}
w_{Q^i}
&=w_K
\cdotp \frac{m_{i+1}}{m_{i+1}-1},
\quad\text{for $0\le i\le l$},
\quad\text{and}\\
\label{pf:hat:wPiwQi}
w_{I^i}
w_{J^i}
&=
\begin{cases*}
w_K, & if $i=0$,\\
\displaystyle
w_K
\cdotp \frac{m_i}{m_i-1}, 
& if $1\le i\le \ell(K_2)$.
\end{cases*}
\end{align}
\end{itemize}
It follows that 
\begin{align}
\label{pf:hat:S0}
S^0
&=
(m-1)w_K
-(m-2)
\cdotp \frac{m_1}{m_1-1}
\cdotp w_K
=w_K
\cdotp\frac{m_1-m+1}{m_1-1},
\quad\text{and}
\\
\label{pf:hat:Sl}
S^l
&=
S^0+w_K\sum_{i=1}^l
\brk3{
\brk1{
m-1-\abs{I^i}}
\cdotp 
\frac{m_i}{m_i-1}
-\brk1{
m-2-\abs{I^i}}
\cdotp 
\frac{m_{i+1}}{m_{i+1}-1}}.
\end{align}
This sum in \cref{pf:hat:Sl} can be simplified by telescoping. 
Precisely speaking, 
since $i$th the negative term
and the $(i+1)$th positive term have sum
\[
-\brk1{
m-2-\abs{I^i}}
\cdotp 
\frac{m_{i+1}}{m_{i+1}-1}
+\brk1{
m-1-\abs{I^{i+1}}}
\cdotp 
\frac{m_{i+1}}{m_{i+1}-1}
=
-m_{i+1},
\]
we can simplify the sum in \cref{pf:hat:Sl}
by keeping the first positive term and the last negative term as
\[
S^l
=S^0
+w_K\brk2{
\brk1{
m-1-\abs{I^1}}
\cdotp \frac{m_1}{m_1-1}
-m_2-\dots-m_l
-\brk1{
m-2-\abs{I^l}}
\cdotp 
\frac{m_{l+1}}{m_{l+1}-1}
}.
\]
Together with \cref{pf:hat:S0}, we can infer that when $l\ge 1$,
\begin{align}
\notag
\frac{S^l}{w_K}
&=
\frac{m_1-m+1}{m_1-1}
+(m-1-m_1)
\cdotp \frac{m_1}{m_1-1}
-m_2-\dots-m_l
-\brk1{
m-2-\abs{I^l}}
\cdotp 
\frac{m_{l+1}}{m_{l+1}-1}
\\
\label{pf:Sl/wK}
&=
\frac{\abs{I^{l+1}}-m+1}{m_{l+1}-1}.
\end{align}
In view of \cref{pf:hat:S0},
we see that \cref{pf:Sl/wK} holds for $l=0$ as well.
Note that
\[
S^l\ge 0
\iff
\abs{I^{l+1}}\ge m-1
\iff	
r=l.
\]
Here we have two cases to deal with.
If $r=l$, then 
\begin{equation}\label{pf:hat:DeltaK:r=l}
\Delta(K)=S^l
=w_K
\cdotp
\frac{\abs{I^{l+1}}-m+1}{m_{l+1}-1}
\ge 0.
\end{equation}
If $r\ge l+1$, 
then 
by \cref{pf:Sl/wK,pf:hat:wPiwQi}, 
\begin{align*}
S^l
+a_{I^{l+1}}
w_{I^{l+1}}
w_{J^{l+1}}
&= 
w_K
\cdotp
\frac{\abs{I^{l+1}}-m+1}
{m_{l+1}-1}
+\brk1{
m-1-\abs{I^{l+1}}}
\cdotp w_K
\cdotp \frac{m_{l+1}}{m_{l+1}-1}
\\
&=w_K
\cdotp 
\brk1{
m-1-\abs{I^{l+1}}}.
\end{align*}
It follows that 
\begin{equation}\label{pf:hat:DeltaK:r>l}
\Delta(K)
=
w_K
\cdotp 
\brk1{
m-1-\abs{I^{l+1}}
}
+
\sum_{i=l+2}^r
a_{I^i}
w_{I^i}
w_{J^i}
\ge 0.
\end{equation}
This completes the proof.
\end{proof}

By carefully collecting all terms of $X_{H_{a,m,b}}$
along the proof of \cref{thm:epos:hat},
and combinatorially reinterpreting 
the coefficients and bound requirements,
we can assemble a positive $e_I$-expansion for
the chromatic symmetric function of hats.

\begin{theorem}[Hats]\label{thm:hat}
Let $n=a+m+b$, where $m\ge 2$ and $a,b\ge0$.
Then
\begin{align}
\label{fml:hat}
X_{H_{a,m,b}}
&=
\sum_{
K\vDash n,\
N_K\le -1}
\frac{-N_K w_K e_K}
{\Theta_K^+(b+m)
+\Theta_K^-(b+m-1)}
+
\sum_{
K\vDash n,\
N_K\ge 1
}
N_K
w_K
e_K\\
\notag
&\qquad
+
\sum_{(I,J)\in\mathcal{S}_{a,m,b}}
\brk1{m-1-\abs{I}}
w_I
w_J
e_{I\!J},
\end{align}
where 
$N_K
=
\Theta_K^+(b+1)
-
\Theta_K^+(b+m)$, 
and if we write $J_1 J_2$ 
as the bisection of $J$
such that $\abs{J_1}=\sigma_J^+(b+1)$,
\begin{align*}
\mathcal{S}_{a,m,b}
&=
\brk[c]1{
(I,J)\colon
K=J_1\overline{I}J_2\vDash n,\
i_1\ne 1,\
\abs{J_1}\le b+m-1,\
2\le\abs{I}\le m-2,\
\abs{J_2}\le \sigma_{\overline{K}}^-(a)-1}\\
&\qquad
\cup\{(I,J)\colon
K=J_1\overline{I}J_2\vDash n,\
i_1\ne 1,\
\abs{J_1}\ge b+m,\
2\le\abs{I}\le m-2\}
\\
&\qquad
\cup
\{(I,J)\colon
I\!J\vDash n,\
i_1=1,\
\abs{I}\le m-2,\
\abs{J_2}\le a-1
\}.
\end{align*}
\end{theorem}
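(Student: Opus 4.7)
The plan is to package the computation from the proof of \cref{thm:epos:hat} into an explicit closed formula, with no essentially new argument required. I would resume from the intermediate identity \cref{pf:hat:X2}, which displays $X_G$ as $\sum_{K\in\mathcal K}\Delta(K)\,e_K+\sum_{(I,J)\in\mathcal A_1\cup\mathcal A_2}a_Iw_Iw_Je_{IJ}$, and refine each piece into a sum compatible with \cref{fml:hat}.

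For the $K$-indexed part, I would rewrite $\Delta(K)$ in terms of $N_K$ using the identities
\[
|I^{l+1}|-m+1=-N_K,\qquad m_{l+1}-1=\Theta_K^+(b+m)+\Theta_K^-(b+m-1),
\]
each obtained by expanding $|P^{l+1}|=\sigma_K^+(b+m)$ and $|P^l|=\sigma_K^-(b+m-1)$ via \cref{def:Theta+,def:Theta-}. The regime $r=l$ is precisely $N_K\le 0$, with $N_K=0$ contributing nothing, so \cref{pf:hat:DeltaK:r=l} becomes the first sum of \cref{fml:hat} indexed by $N_K\le -1$. In the complementary regime $N_K\ge 1$, \cref{pf:hat:DeltaK:r>l} yields the middle sum $N_K w_K e_K$ together with a residual $\sum_{i=l+2}^r a_{I^i}w_{I^i}w_{J^i}$ that must be folded into the third sum.

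The remaining step is to verify that the third sum captures exactly $\mathcal A_1'\cup\mathcal A_2$ together with these residuals. The second and third defining subsets of $\mathcal S_{a,m,b}$ coincide with $\mathcal A_2$ and $\mathcal A_1'$, respectively, by direct inspection. The first subset collects the residual pairs $(I^i,J^i)$ with $l+2\le i\le r$ as $K$ ranges over $\mathcal K$; the key equivalence is
\[
|J_2|\le\sigma_{\overline{K}}^-(a)-1\iff i\ge l+2,
\]
which rests on the observation that any suffix of $K$ containing a part of $K_1$ has sum at least $|K_2|+1\ge a+1$, whence $\sigma_{\overline{K}}^-(a)=|Q^{l+1}|$, and strict monotonicity of $i\mapsto|Q^i|$ converts the bound on $|J_2|=|Q^i|$ into the index inequality. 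Uniqueness of reconstruction---that each pair arises from a unique source---rests on the implication \eqref{pf:hat:implication:K1}: when $|J_1|\le b+m-1$, the bisection forces $J_1=K_1$ for $K=J_1\overline{I}J_2$, so $K$ and $i=\ell(I)$ are determined by $(I,J)$.

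The main obstacle is bookkeeping rather than new combinatorics. In particular, the equivalence above hinges on the \emph{strict} replacement $\sigma_{\overline{K}}^-(a)-1$ in place of $\sigma_{\overline{K}}^-(a)$, and the fact that $m_{l+1}\ge 2$ guarantees $|Q^{l+2}|\le|Q^{l+1}|-2$, so the boundary index $i=l+1$---whose contribution is already absorbed into the $N_K w_K e_K$ term---is correctly excluded from the third sum.
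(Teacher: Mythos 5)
Your proposal follows the paper's own proof essentially step for step: resume from \cref{pf:hat:X2}, rewrite the numerator and denominator of $\Delta(K)$ as $-N_K$ and $\Theta_K^+(b+m)+\Theta_K^-(b+m-1)$, split according to the sign of $N_K$, and convert the residual index range $l+2\le i\le r$ into the condition $\abs{J_2}\le\sigma_{\overline{K}}^-(a)-1$ via $\sigma_{\overline{K}}^-(a)=\abs{Q^{l+1}}$ and the strict decrease of $i\mapsto\abs{Q^i}$; the uniqueness argument via \eqref{pf:hat:implication:K1} is also the one the paper uses. That part is correct.

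The one place your bookkeeping does not match the stated formula is the claim that the second defining subset of $\mathcal S_{a,m,b}$ ``coincides with $\mathcal A_2$.'' It does not: that subset carries the lower bound $2\le\abs{I}$, whereas $\mathcal A_2$ contains the pairs $(\epsilon,K)$ with $\abs{K_1}\ge b+m$. Correspondingly, the middle sum you derive from \cref{pf:hat:DeltaK:r>l} alone runs only over $K\in\mathcal K$ (so $\abs{K_1}\le b+m-1$) with $N_K\ge 1$, while the middle sum in \cref{fml:hat} runs over \emph{all} $K\vDash n$ with $N_K\ge 1$. The missing step, which the paper carries out explicitly, is to peel off $\{(\epsilon,K)\colon \abs{K_1}\ge b+m\}\subseteq\mathcal A_2$ and absorb its contribution $a_\epsilon w_Ke_K=(m-1)w_Ke_K$ into the middle sum, using that $\abs{K_1}\ge b+m$ forces $N_K=\Theta_K^+(b+1)-\Theta_K^+(b+m)=m-1$; this is exactly why the second subset of $\mathcal S_{a,m,b}$ excludes $I=\epsilon$. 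Without this merge your derivation produces an expression numerically equal to $X_{H_{a,m,b}}$ but not literally the right-hand side of \cref{fml:hat}. (A smaller check of the same flavour, which the paper also records: the first sum of \cref{fml:hat} is indexed by all $K\vDash n$ with $N_K\le-1$, so one must verify that $N_K\le-1$ already forces $\Theta_K^+(b+1)\le m-2$, i.e.\ $K\in\mathcal K$ up to terms with $w_K=0$; this follows from \cref{lem:Theta+.t}.) Both points are easily repaired and do not affect the overall strategy.
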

\begin{proof}
We keep notion and notation in the proof of \cref{thm:epos:hat}.
Let $K\in\mathcal K$. Then 
\[
\abs{K_1}\le b+m-1,
\quad\text{i.e.,}\quad
\Theta_K^+(b+1)
\le m-2.
\]
The numerator and denominator  
in \cref{pf:hat:DeltaK:r=l}
can be recast as
\begin{align}
\notag
\abs{I^{l+1}}-m+1
&=
\brk1{
\abs{K_1}+\abs{I^{l+1}}-b-m}
-\brk1{
\abs{K_1}-b-1},
\quad\text{and}\\
\label{pf:hat:numerator}
&=
\Theta_{K}^+(b+m)-\Theta_K^+(b+1),\\
\notag
m_{l+1}-1
&=
\brk1{
\abs{K_1}+\abs{I^{l+1}}-b-m}
+\brk1{
b+m-1-\abs{K_1}-\abs{I^l}}\\
\label{pf:hat:denominator}
&=
\Theta_K^+(b+m)
+\Theta_K^-(b+m-1),
\end{align}
respectively.
By \cref{pf:hat:DeltaK:r=l,pf:hat:DeltaK:r>l},
\begin{align}
\label{pf:hat:sum1}
\smashoperator[r]{
\sum_{
\substack{
K\in\mathcal K\\
\Theta_{K}^+(b+m)\ge \Theta_K^+(b+1)
}}}
\Delta(K)e_K
&=
\sum_{
\substack{
K\vDash n,\ w_K\ne 0\\
\Theta_K^+(b+1)\le m-2\\
\Theta_K^+(b+1)\le \Theta_K^+(b+m)-1
}}
\frac{\Theta_{K}^+(b+m)-\Theta_K^+(b+1)}{\Theta_K^+(b+m)
\Theta_K^-(b+m-1)}
w_K
e_K,
\quad\text{and}\\
\notag
\smashoperator[r]{
\sum_{
\substack{
K\in\mathcal K\\
\Theta_{K}^+(b+m)< \Theta_K^+(b+1)
}}}
\Delta(K)e_K
&=\sum_{
K\in \mathcal K'}
\brk3{
\brk1{\Theta_{K}^+(b+1)-\Theta_K^+(b+m)}
w_K
+
\sum_{i=l+2}^r
a_{I^i}
w_{I^i}
w_{J^i}}
e_K,
\end{align}
where
\[
\mathcal K'
=
\{K\vDash n\colon
w_K\ne0,\
\Theta_{K}^+(b+m)+1
\le 
\Theta_K^+(b+1)
\le m-2
\}.
\]
We claim that the right side of \cref{pf:hat:sum1} 
can be simplified to $K\vDash n$ and 
\begin{equation}\label{pf:fml:hat:sum1}
\Theta_K^+(b+m)
-\Theta_K^+(b+1)
\ge 1.
\end{equation}
In fact, \cref{pf:fml:hat:sum1}
is one of the original bound requirements.
It suffices to show that $\Theta_K^+(b+1)\le m-2$ also holds.
Assume to the contrary that $\Theta_K^+(b+1)\ge m-1$.
Then 
\[
\Theta_K^+(b+1)=\Theta_K^+(b+m)+(m-1)
\]
by the definition \cref{def:Theta+} of $\Theta_K^+$,
contradicting \cref{pf:fml:hat:sum1}.
This proves the claim.

In view of \cref{pf:hat:X2},
it remains to simplify
\[
\sum_{
K\in \mathcal K'}
\sum_{i=l+2}^r
a_{I^i}
w_{I^i}
w_{J^i}
e_K
+\sum_{
(I,J)\in\mathcal A_1\cup\mathcal A_2}
a_I
w_I
w_J
e_{I\!J},
\]
in which the summands have
the same form $a_I w_I w_J e_{I\!J}$.
If a pair $(I,J)$ appears as $(I^i,J^i)$ in the first sum,
then the requirement $i\ge l+2$ is equivalent to say that 
\[
\abs{I}>\abs{I^{l+1}},
\quad\text{i.e.,}\quad
\abs{J_1\overline{I}}
>\sigma_{J_1\overline{I}J_2}^+(b+m),
\]
and the requirement $i\le r$ is equivalent to $\abs{I}\le m-2$.
Thus the set of pairs $(I,J)$ for the first sum is
\begin{align*}
&\bigcup_{
K\in\mathcal K'}
\brk[c]1{
(I^i,J^i)\colon
\overline{I^i} J^i_2=K_2
\text{ for some $l+2\le i\le r$}}\\
=&\
\{
(I,J)\colon
K=J_1\overline{I}J_2\vDash n,\
w_K\ne 0,\
\Theta_K^+(b+m)+1
\le 
\Theta_K^+(b+1)
\le m-2,\\
&\qquad
\abs{J_1\overline{I}}
>\sigma_K^+(b+m),\
\abs{I}\le m-2
\}
\\
=&\
\brk[c]1{
(I,J)\colon
K=J_1\overline{I}J_2\vDash n,\
w_K\ne 0,\
\abs{J_1}\le b+m-1,\
\abs{I}\le m-2,\
\abs{J_1 I}
>\sigma_K^+(b+m)}\\
=&\
\brk[c]1{
(I,J)\colon
K=J_1\overline{I}J_2\vDash n,\
w_K\ne 0,\
\abs{J_1}\le b+m-1,\
\abs{I}\le m-2,\
\abs{J_2}\le \sigma_{\overline{K}}^-(a)-1}.
\end{align*}
On the other hand, 
\begin{align*}
\mathcal A_1
&=
\{(I,J)\in\mathcal{A}\colon
i_1=1,\
\abs{J_2}\le a-1\}\\
&=
\{(I,J)\colon
I\!J\vDash n,\
w_I 
w_J\ne 0,\
i_1=1,\
\abs{I}\le m-2,\
\abs{J_2}\le a-1
\},
\quad\text{and}
\\
\mathcal A_2
&=
\{(I,J)\in\mathcal{A}
\colon i_1\ne 1,\
\abs{J_1}\ge b+m
\}
\\
&=
\{(I,J)\colon
I\!J\vDash n,\
\abs{I}\le m-2,\
w_I 
w_J\ne 0,\
i_1\ne 1,\
\abs{J_1}\ge b+m\}
\\
&=
\{
(I,J)\colon
K=J_1\overline{I}J_2\vDash n,\
w_K\ne 0,\
\abs{J_1}\ge b+m,\
\abs{I}\le m-2
\}.
\end{align*}
Since the product $a_I w_I w_J e_{I\!J}$ vanishes 
when $w_I w_J=0$, 
we can replace the conditions $w_K\ne 0$ for $K=J_1\overline{I}J_2$
with $i_1\ne 1$.
Furthermore,
\begin{align*}
\{
(I,J)\in\mathcal A_2\colon 
I=\epsilon
\}
=
\{
(\epsilon,K)\colon
K\vDash n,\
\abs{K_1}\ge b+m
\}.
\end{align*} 
The sum for $a_I w_I w_J e_{I\!J}$ over this subset can be merged into the 
second sum as
\begin{align*}
&\sum_{
\substack{
K\vDash n\\
\Theta_K^+(b+m)+1
\le \Theta_K^+(b+1)
\le m-2
}}
\brk1{
\Theta_K^+(b+1)
-\Theta_K^+(b+m)
}
w_K 
e_K
+\sum_{
\substack{
K=I\!J\vDash n,\
I=\epsilon\\
\abs{K_1}\ge b+m
}}
a_I
w_I
w_J
e_{I\!J}\\
&\
=
\sum_{
\substack{
K\vDash n\\
\Theta_K^+(b+1)
-\Theta_K^+(b+m)
\ge 1
}}
\brk1{
\Theta_K^+(b+1)
-\Theta_K^+(b+m)
}
w_K
e_K;
\end{align*}
this is because when $\abs{K_1}\ge b+m$,
\[
\Theta_K^+(b+1)
-\Theta_K^+(b+m)
=m-1
=a_\epsilon
\ge 1.
\]
Collecting all the contributions to $X_G$,
we obtain \cref{fml:hat} as desired.
\end{proof}

For example, 
\begin{align*}
X_{H_{1,4,1}}
&=
\sum_{
\substack{
K\vDash 6\\
\Theta_{K}^+(5)
-\Theta_K^+(2)\ge 1
}}
\frac{\Theta_{K}^+(5)-\Theta_K^+(2)}
{\Theta_K^+(5)
+\Theta_K^-(4)}
w_K
e_K
+
\sum_{
\substack{
K\vDash 6\\
\Theta_K^+(2)
-\Theta_K^+(5)\ge 1
}}
\brk1{
\Theta_K^+(2)
-\Theta_K^+(5)
}
w_K
e_K\\
&\qquad
+\smashoperator[r]{
\sum_{
\substack{
J_1\overline{I}J_2\vDash 6,\
i_1\ne 1\\
\abs{J_1}\le 4,\
\abs{I}=2\\
\abs{J_2}
\le \sigma_{\overline{K}}^-(1)-1
}}}
\brk1{3-\abs{I}}
w_I
w_J
e_{I\!J}
+\smashoperator[r]{
\sum_{
\substack{
J_1\overline{I}J_2\vDash 6,\
i_1\ne 1\\
\abs{J_1}\ge 5,\
\abs{I}=2
}}}
\brk1{3-\abs{I}}
w_I
w_J
e_{I\!J}
+\smashoperator[r]{
\sum_{
\substack{
I\!J\vDash 6,\
i_1=1\\
\abs{I}\le 2,\
\abs{J_2}\le0
}}}
\brk1{3-\abs{I}}
w_I
w_J
e_{I\!J}\\
&=
(w_{24}e_{24}/3
+w_{2^3}e_{2^3})
+(w_{42}e_{42}
+w_{132}e_{132}
+3
w_6
e_6
+
3
w_{15}
e_{15})
\\
&\qquad
+0
+0
+2(
w_1
w_5 
e_{51}
+
w_1 
w_{14} 
e_{141}
)\\
&=
18e_{6}+22e_{51}+6e_{42}+6e_{41^2}+2e_{321}+2e_{2^3}.
\end{align*}
Particular hats $H_{a,m,b}$ are special graphs that we explored previously.
\begin{enumerate}
\item
For $a=0$, \cref{thm:hat} reduces to \cref{thm:tadpole} 
since only the second sum in \cref{fml:hat} survives.
\item
For $m=2$, \cref{thm:hat} reduces to \cref{X.path},
since only the first two sums in \cref{fml:hat} survive,
and they are the sum of terms~$w_I e_I$ for $\Theta_K^+(b+1)=0$
and for $\Theta_K^+(b+1)\ge 1$ respectively.
\item
For $b=0$, \cref{thm:hat} may give
a noncommutative analog for the tadpole $T_{m,a}$
that is different from the one given by \cref{thm:tadpole}. 
For instance, these two analogs for $X_{T_{3,2}}$ are respectively
\[
\widetilde X_{H_{2,3,0}}
=10\Lambda^5
+6\Lambda^{14}
+2\Lambda^{23}
+6\Lambda^{32}
\quad\text{and}\quad
\widetilde X_{T_{3,2}}
=10\Lambda^5
+6\Lambda^{14}
+8\Lambda^{23}.
\]
\end{enumerate}

For $m=3$, the hat $H_{a,3,b}$ is the generalized bull $K(1^{a+1}21^b)$.
We produce for generalized bulls a neat formula,
which is not a direct specialization of \cref{thm:hat}. 

\begin{theorem}[Generalized bulls]\label{thm:gbull}
For $a\ge 1$ and $n\ge a+2$,
\[
X_{K(1^{a}21^{n-a-2})}
=
\sum_{
\substack{
I\vDash n,\ 
i_{-1}\ge 3\\
\Theta_I^+(a)\le 1
}}
\frac{i_{-1}-2}
{i_{-1}-1}
\cdotp w_I
e_I
+
\sum_{
\substack{
I\vDash n\\
\Theta_I^+(a)\ge 2
}}
2
w_I
e_I
+
\sum_{
\substack{
J\vDash n-1\\
\Theta_J^+(a)\ge 2
}}
w_J
e_{J1}.
\]
\end{theorem}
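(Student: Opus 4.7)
The plan is to derive a short path-product expansion of $X_G$ with $G = K(1^a 2 1^{n-a-2})$ via triple deletion, and then to carry out an $\mathrm{NSym}$-level coefficient analysis that regroups compositions of $n$ into the three sums of the statement. To set up, label the triangle of $G$ as $\{u,v,w\}$ with $u,w$ the vertices shared with the two incident paths and $v$ the apex of degree two. The triple-deletion property \cref{thm:3del}, applied to the stable triple $\{u,v,w\}$, identifies the two auxiliary graphs as the spiders $S(a-1,1,n-a-1)$ and $S(a,1,n-a-2)$ and gives
\[
X_G = X_{S(a-1,1,n-a-1)} + X_{S(a,1,n-a-2)} - X_{P_{n-1}}X_{K_1}.
\]
Expanding each spider via \cref{X:spider.abc} and telescoping the inner sums $\sum_i X_{P_i}X_{P_{n-i}}$ that appear on both sides produces the key identity
\[
X_G = 2X_{P_n} + X_{P_{n-1}}X_{P_1} - X_{P_{a+1}}X_{P_{n-a-1}} - X_{P_a}X_{P_{n-a}}.
\]

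Next I would lift this to $\mathrm{NSym}$ by choosing the analog
\[
\widetilde X_G = 2\widetilde X_{P_n} + \widetilde X_{P_{n-1}}\widetilde X_{P_1} - \widetilde X_{P_{a+1}}\widetilde X_{P_{n-a-1}} - \widetilde X_{P_a}\widetilde X_{P_{n-a}},
\]
with each factor expanded via \cref{wX.path}. For a composition $K\vDash n$, the product $\widetilde X_{P_k}\widetilde X_{P_{n-k}}$ contributes to $[\Lambda^K]\widetilde X_G$ only when $K$ admits a prefix of size $k$, that is, $\Theta_K^+(k)=0$, and \cref{lem:wIJ} then converts the product $w_I w_J$ arising from the split $K=I\!J$ into $\tfrac{j_1}{j_1-1}w_K$. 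Classifying $K$ by the value of $\Theta_K^+(a)$ gives, for $w_K\ne 0$, the coefficient $2w_K$ when $\Theta_K^+(a)\ge 2$ (both subtracted products vanish) and $\tfrac{k_{r+1}-2}{k_{r+1}-1}w_K$ when $\Theta_K^+(a)\le 1$, where $r+1$ is the position of the part of $K$ immediately following the minimal prefix of sum at least $a$. For $K=J1$ with $w_K=0$, only $\widetilde X_{P_{n-1}}\widetilde X_{P_1}$ survives, contributing $w_J$.

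Finally, I would regroup these $\Lambda^K$-coefficients to recover the stated $e_I$-expansion. The $\Theta_K^+(a)\ge 2$ terms directly realize the second sum. For the $\Theta_K^+(a)\le 1$ terms, I would introduce the involution $\psi$ that interchanges $k_{r+1}$ with the last part $k_{-1}$; since $\psi$ fixes the first $r$ parts, it preserves $\rho(K)$, $w_K$, and $\Theta_K^+(a)$, and it carries $\tfrac{k_{r+1}-2}{k_{r+1}-1}w_K$ to $\tfrac{i_{-1}-2}{i_{-1}-1}w_I$ with $i_{-1}=k_{r+1}$. Since the coefficient vanishes when $k_{r+1}=2$, only $i_{-1}\ge 3$ survives, producing the first sum. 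The trailing-$1$ contributions with $\Theta_J^+(a)\ge 2$ are precisely the third sum. The hard part will be showing that the residual trailing-$1$ contributions with $\Theta_J^+(a)\le 1$ cancel, composition class by composition class under $\rho$, against the negative $\Lambda^K$-coefficients produced by compositions with an internal $1$ (which have $w_K=0$ but are still picked up by $\widetilde X_{P_a}\widetilde X_{P_{n-a}}$ or $\widetilde X_{P_{a+1}}\widetilde X_{P_{n-a-1}}$). Designing the companion sign-reversing involution that relocates an internal $1$ to the tail, and verifying that the surviving index sets are exactly those cut out by the conditions $i_{-1}\ge 3$ and $\Theta_J^+(a)\ge 2$ in the statement, is the technical heart of the argument.
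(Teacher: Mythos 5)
Your route is genuinely different from the paper's: the paper obtains \cref{thm:gbull} by specializing the hat formula \cref{thm:hat} at $(a,m,b)=(n-a-2,3,a-1)$ and then simplifying the three resulting sums, whereas you attack the generalized bull directly with \cref{thm:3del} and \cref{X:spider.abc}. Your reduction is correct: with the triangle labelled as you describe, triple deletion gives
$X_G=X_{S(a-1,1,n-a-1)}+X_{S(a,1,n-a-2)}-X_{P_{n-1}}X_{P_1}$,
and applying \cref{X:spider.abc} with the leg of length $1$ in the role of $c$ collapses this to the four-term identity
$X_G=2X_{P_n}+X_{P_{n-1}}X_{P_1}-X_{P_{a+1}}X_{P_{n-a-1}}-X_{P_a}X_{P_{n-a}}$,
which checks out on small cases. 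Your coefficient analysis for compositions $K$ with $w_K\ne0$ is also correct, as is the part-swapping involution that turns $\frac{k_{r+1}-2}{k_{r+1}-1}\,w_K$ into the first sum; the paper's proof uses the same device, moving the distinguished part to the end rather than swapping it with the last one.

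However, the proof is not complete: you explicitly defer what you yourself call the technical heart, namely the cancellation of the negative $\Lambda^K$-contributions at compositions $K$ with $w_K=0$, and without that step you have neither the positivity nor the stated formula. For the record, the step does close along the lines you anticipate. A composition with $w_K=0$ receives a nonzero contribution only if its unique non-first part equal to $1$ sits either at the very end (picked up by $\widetilde X_{P_{n-1}}\widetilde X_{P_1}$, contributing $+w_J$ for $K=J1$) or immediately after the prefix of size $a$ (resp.\ $a+1$), picked up by $-\widetilde X_{P_a}\widetilde X_{P_{n-a}}$ (resp.\ $-\widetilde X_{P_{a+1}}\widetilde X_{P_{n-a-1}}$) and contributing $-w_Pw_{1Q}=-w_{PQ}$ by \cref{def:w}. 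Pairing $K=P\,1\,Q$ with $K^\ast=PQ1$ is a bijection onto the set of compositions $J1$ with $w_J\ne0$ and $\Theta_J^+(a)\le1$ (the cases $\abs{P}=a$ and $\abs{P}=a+1$ cannot both occur for the same $J$, since $a\ge1$ and $w_J\ne0$ forbid a non-first part $1$ right after the size-$a$ prefix of $J$), and since $\rho(K)=\rho(K^\ast)$ the contributions cancel in the commutative image. What survives of the trailing-$1$ terms is exactly $\sum_{J\vDash n-1,\ \Theta_J^+(a)\ge2}w_Je_{J1}$, your third sum; the degenerate case $n=a+2$, where $K^\ast=K$, cancels internally. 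This bookkeeping is precisely what the paper carries out once and for all inside the proof of \cref{thm:epos:hat} (the bijection $h$ there), which is why its proof of \cref{thm:gbull} can skip it; your proposal must supply it explicitly to count as a proof.
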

\begin{proof}
Let $G=K(1^a21^{n-a-2})$.
Taking $(a,m,b)=(n-a-2,\,3,\,a-1)$ in \cref{fml:hat}, 
we obtain 
\begin{equation}\label{pf:X.bull}
X_G=S_1+S_2+S_3,
\end{equation}
where
\begin{align}
\notag
S_1
&=
\sum_{
K\vDash n,\
N_K\le 0,\
w_K\ne 0}
\frac{-N_K}
{D_K}
w_K
e_K,\\
\notag
S_2
&=
\sum_{
K\vDash n,\
N_K>0,\
w_K\ne 0}
N_K
w_K
e_K,
\quad\text{and}\\
\label{pf:gbull:S3}
S_3
&=
\sum_{
J\vDash n-1,\
\Theta_J^+(a)\ge 2}
w_J
e_{1J},
\end{align}
where
$N_K
=\Theta_K^+(a)
-\Theta_{K}^+(a+2)$
and
$D_K
=\Theta_K^+(a+2)
+\Theta_K^-(a+1)$.

We shall simplify $S_1$ and $S_2$ separately.
For $S_1$, we proceed in $3$ steps.
First, we claim that 
\begin{equation}\label{pf:eqrl:cond:S1}
\begin{cases*}
N_K\le 0\\
w_K\ne 0
\end{cases*}
\iff
\begin{cases*}
\Theta_K^+(a)
\le 1\\
w_K\ne 0.
\end{cases*}
\end{equation}
In fact, for the forward direction,
if $\Theta_K^+(a)
\ge 2$, then 
$N_K=2$
by \cref{lem:Theta+.t}, a contradiction. 
For the backward direction, we have two cases to deal with:
\begin{itemize}
\item
If $\Theta_K^+(a)=0$, 
then $N_K
=-\Theta_{K}^+(a+2)\le 0$ holds trivially.
\item
If $\Theta_K^+(a)=1$,
since $w_K \ne 0$, we then find 
$\Theta_K^+(a+2)\ge 1$ and $N_K\le 0$. 
\end{itemize}
This proves the claim.
It allows us to change the sum range for $S_1$ to 
\[
\mathcal K_a
=
\{
K\vDash n\colon 
w_K\ne 0,\
\Theta_K^+(a)
\le 1
\}
=
\{
K\vDash n\colon 
w_K\ne 0,\
\Theta_K^+(a)+\Theta_K^-(a+1)=1
\}.
\]
Second, for $K\in\mathcal K_a$, 
we have 
$-N_K
=
\Theta_{K}^+(a+2)
-
\Theta_K^+(a)
=
D_K-1$ 
and
\[
S_1
=
\sum_{
K\in\mathcal K_a
}
\frac{D_K-1}
{D_K}
w_K
e_K.
\]
Thirdly,
we claim that 
\begin{equation}\label{pf:gbull:S1:new}
S_1
=
\sum_{
K\in\mathcal K_a
}
\frac{k_{-1}-2}
{k_{-1}-1}
w_K
e_K.
\end{equation}
In fact,
recall from \cref{pf:hat:denominator} that $D_K=m_{l+1}-1$ is a factor 
of $w_K$.
By the definition \cref{def:hat:l} of~$l$,
the part $m_{l+1}$ is the part~$k_j$ of $K$ such that 
\[
\abs{k_1\dotsm k_{j-1}}
=\sigma_K^-(a+1),
\quad\text{i.e.,}\quad
\abs{k_1\dotsm k_j}
=\sigma_K^+(a+2).
\] 
Since $\Theta_K^+(a)\le 1$, we find $j\ge 2$.
For any $K=k_1\dotsm k_s\in\mathcal K_a$,
define $H=\varphi(K)$ to be the composition obtained from $K$ by moving the part $k_j$ to the end, i.e.,
$
H
=
k_1\dotsm 
k_{j-1}
k_{j+1}
\dotsm
k_s
k_j$.
Then $w_H=w_K\ne 0$,
$e_K=e_H$,
and
\[
\abs{h_1\dotsm h_{j-1}}
=\abs{k_1\dotsm k_{j-1}}
=\sigma_K^-(a+1)
\in\{a,\,a+1\}.
\]
Thus $\Theta_H^+(a)\le 1$, and $H\in\mathcal K_a$.
Since $w_H\ne 0$, we find
$\abs{h_1\dotsm h_{j-1}}
=\sigma_H^-(a+1)$.
Therefore, 
$K$ can be recovered from $H$ by moving the last part
to the position immediately after
$h_{j-1}$. 
Hence $\varphi$ is a bijection on $\mathcal K_a$, and
\[
S_1
=
\sum_{
K\in\mathcal K_a}
\frac{D_K-1}
{D_K}
w_K
e_K
=
\sum_{
H\in\mathcal K_a
}
\frac{h_{-1}-2}
{h_{-1}-1}
w_H
e_H.
\]
This proves the claim.
We can strengthen $H
\in\mathcal K_a$
by requiring $h_{-1}\ge 3$
without loss of generality.

Next, the condition $N_K>0$ in $S_2$ can be replaced with $\Theta_K^+(a)\ge 2$
by the equivalence relation \eqref{pf:eqrl:cond:S1}.
Under this new range requirement for $S_2$, we find $N_K=2$ by \cref{lem:Theta+.t}.
Thus 
\begin{equation}\label{pf:gbull:S2:new}
S_2
=\sum_{
K\vDash n,\
\Theta_K^+(a)\ge 2}
2
w_K
e_K.
\end{equation}
Substituting \cref{pf:gbull:S1:new,pf:gbull:S2:new,pf:gbull:S3} into \cref{pf:X.bull}, 
we obtain the desired formula.
\end{proof}

We remark that \cref{thm:gbull} reduces to \cref{thm:lariat} when $n=a+2$.

\appendix
\section{A proof of \cref{prop:cycle}
using the composition method}\label{sec:appendix}

By \cref{Psi2Lambda,epsilon:IJ}, we can deduce from \cref{lem:Psi:cycle} that
\begin{align*}
\widetilde X_{C_n}
&=(-1)^n
\sum_{J\succeq n}
\varepsilon^J
f\!p(J,n)
\Lambda^J
+\sum_{I\vDash n}
\varepsilon^I 
i_1
\sum_{J\succeq I}
\varepsilon^J
f\!p(J,I)
\Lambda^J\\
&=\sum_{J\vDash n}
\brk4{
(-1)^{\ell(J)}
f\!p(J,n)
+\sum_{I\preceq J}
(-1)^{\ell(I)+\ell(J)}
i_1
\cdotp 
f\!p(J,I)}
\Lambda^J.
\end{align*}
Let $J=j_1\dotsm j_t\vDash n$.
Then any composition $I$ of length $s$
that is finer than $J$ can be written as
\[
I
=
(j_{k_1}+\dots+j_{k_2-1})
(j_{k_2}+\dots+j_{k_3-1})
\dotsm
(j_{k_s}+\dots+j_t)
\] 
for some indices $k_1<\dots<k_s$, where $k_1=1$ and $k_s\le t$. 
Therefore,
\begin{align*}
[\Lambda^J]\widetilde X_{C_n}
&=
(-1)^t j_1 
+
\sum_{
1=k_1<\dots<k_s\le t
}
(-1)^{t+s}
\abs{
j_1\dotsm j_{k_2-1}
}
j_1
j_{k_2}
\dotsm 
j_{k_s}\\
&=
j_1 
\brk3{(-1)^t 
+
\sum_{
1=k_1<\dots<k_s\le t
}
(-1)^{t+s}
(j_1 j_{k_2}\dotsm j_{k_s}
+j_2 j_{k_2}\dotsm j_{k_s}
+\dots
+j_{k_2-1}j_{k_2}\dotsm j_{k_s})
}\\
&=
j_1 
\brk3{
(-1)^t
+
\sum_{
1\le h_1<k_2<\dots<k_s\le t
}
(-1)^{t+s}
j_{h_1}
j_{k_2}
\dotsm
j_{k_s}
}\\
&=
j_1
(j_1-1)
(j_2-1)
\dotsm
(j_t-1)
=
(j_1-1)
w_J.
\end{align*}
This proves \cref{prop:cycle}.

\section*{Acknowledgment}
We are grateful to Jean-Yves Thibon, 
whose expectation for noncommutative analogs
of chromatic symmetric functions of graphs 
beyond cycles encourages us to go on the discovery voyage 
for neat formulas.
We would like to thank Foster Tom for pointing us to Ellzey's paper, and for presenting his signed combinatorial formula in an invited talk. 
We also thank the anonymous reviewer 
for sharing his/her understanding of the motivation and for the
writing suggestions.

\bibliography{../csf.bib}
\end{document}